\numberwithin{equation}{section}
\newtheorem{theorem}{Theorem}[section]
\newtheorem{remark}[theorem]{Remark}
\newtheorem{lemma}[theorem]{Lemma}
\newcommand{\ct}[1]{\langle {#1}\rangle \lower.3ex\mathrm{$_{t}$}}
\newcommand{\lt}[1]{[ {#1}] \lower.3ex\mathrm{$_{t}$}}
\def\dint{\displaystyle\int}
\def\dsum{\displaystyle\sum}
\numberwithin{equation}{section}
\def\dfrac{\displaystyle\frac}
\begin{document}

\title[Quantitative weighted bounds for Calder\'{o}n commutator with rough kernel]
	{ Quantitative weighted bounds for Calder\'{o}n commutator\\ with rough kernel}
	
	\author{Yanping Chen\,}
	\address{School of Mathematics and Physics, University of Science and Technology Beijing, Beijing 100083, China}
	\email{yanpingch@ustb.edu.cn}

	\author{Ji Li}
\address{Department of Mathematics,
		Macquarie University, NSW, 2109, Australia}
	\email{ji.li@mq.edu.au}

	\thanks{The project was in part supported by: Yanping Chen's
		NSF of China (\# 11871096, \# 11471033); Ji Li's  DP 170101060.}

	\subjclass[2010]{42B20, 42B25}
	\keywords{ Quantitative weighted bound;  Calder\'{o}n commutator; rough kernel }
	
	\date{\today}
\vskip 1cm

\begin{abstract}
		We consider weighted $L^p(w)$ boundedness ($1<p<\infty $ and $w$ a Muckenhoupt $A_p$ weight) of the Calder\'{o}n commutator $\mathcal C_\Omega$ associated with rough homogeneous kernel,
		under the condition $\Omega\in L^q(\mathbb S^{n-1})$ for $q_0<q\leq\infty$ with $q_0$ a fixed constant depending on $w$. Comparing to the previous related known results (assuming $\Omega\in L^\infty(\mathbb S^{n-1})$), our result for $\Omega\in L^q(\mathbb S^{n-1})$ with $q$ in the range $(q_0,\infty)$ is new. We also obtain a quantitative weighted bound for this $\mathcal C_\Omega$ on 
		$L^p(w)$, which is the best known quantitative result for this class of operators.
	\end{abstract}
	
	\maketitle

%%%%%%%%%%%%%%%%%%%%%%%%%%%%  ?????? ????%%%%%%%%%%%%%%%%%%%%%
%%%%%%%%%%%%%%%%%%%%%%%%%%%%  ?????? ????%%%%%%%%%%%%%%%%%%%%%

 \pagestyle{plain} %???????
 \renewcommand{\thepage}{\arabic{page}}% ?????????????????
 \setcounter{page}{1} %?????1???
 \setcounter{equation}{0}%?????1???????

\section{Introduction}

\subsection{Background and statement of main reuslt}

 The Calder\'{o}n commutators (see \cite{C1, C2}) originate from a representation of linear differential operators by means of singular integral operators, which is an approach to the uniqueness of the Cauchy problem for partial differential equations (see \cite{C3}).

The first version was introduced by Calder\'{o}n \cite{C2}  %the first Calder\'{o}n commutator
$$ \Big[b, H\frac{d}{dx}\Big]f(x):=\text{p.v.}\int_{-\infty}^\infty\left(\frac{-1}{x-y}\right)\left(\frac{b(x)-b(y)}{x-y}\right)f (y)\,dy.$$

 It also plays an important role in the theory of  Cauchy integral along Lipschitz curve in $\mathbb C$ and the Kato square root problem on $\mathbb R$ (see \cite{C3, Fe, Me, MC} for the details).

A more general version is the Calder\'{o}n commutators with rough kernels
\begin{align}\label{point pv of C}
\mathcal{C}_\Omega f(x)=\lim_{\epsilon\to0^+}\mathcal{C}_\epsilon f(x), \quad a.e. \ x\in\mathbb R^n
\end{align}
defined initially for $f\in C_0^\infty(\mathbb R^n)$, where $\mathcal{C}_\epsilon f$ is the truncated Calder\'{o}n commutator of $f$:
 \begin{equation}\label{Cal}
\mathcal{C}_\epsilon f
(x):=\int_{|x-y|>\epsilon}\left(\cfrac{\Omega(x-y)}{|x-y|^{n+1}}\right)\big(b(x)-b(y)\big)f(y)dy,\ \ \ \ \forall\ \ x\in\mathbb R^n,
 \end{equation}
  where  $\Omega$ is
homogeneous of degree zero, integrable on $\mathbb S^{n-1}$ (the unit sphere in $\mathbb R^n$) and satisfies the cancellation condition on the unit sphere
\begin{equation}\label{mean zero}
\int_{\mathbb S^{n-1}}\Omega(x')(x'_k)^N\,d\sigma(x')=0,\quad \ \forall\ \  (k,N)\in \{1,\dots, n\}\times\{0,1\}.
\end{equation}

Using the method of rotation, Calder\'{o}n \cite{C1} proved the boundedness of the commutator $\mathcal{C}_\Omega$ {\ for $\Omega$ in $L\log L(\mathbb S^{n-1})$ and $b\in Lip(\mathbb R^n)$}, and then obtained the boundedness of the operators $[b,T]\nabla$ and $\nabla[b,T]$, where $T$ is a homogeneous singular integral operator with some symbol $K$ which can be defined similarly as $\mathcal{C}_\Omega$, that is,
\begin{equation}\label{SIO}
\begin{cases}
Tf(x)=\lim_{\epsilon\to0^+}T_\epsilon f(x), \ a.e. \ x\in\mathbb R^n;\\[5pt]
T_\epsilon
f(x)=\int_{|y|>\epsilon}K(y)f(x-y)dy,
\end{cases}
 \end{equation}
where $f\in C_0^\infty(\mathbb R^n)$, and the kernel $K$ is homogeneous of degree $-n$, belongs to  $ L_{\rm loc}^1({\mathbb R}^n)$ and
 enjoys the cancellation on the unit sphere
\begin{equation}\label{can of O}
\int_{\mathbb S^{n-1}}K(y')d\sigma(y')=0.
 \end{equation}
Later on, many authors made important progress on the Calder\'on commutators, one can consult \cite{Co, CM, CM1,  CM2, Mu, SH, Y, Ta3, MuWh71, GH, CDH1} and the references therein for its development and applications.  Among these numerous references, we point out that Hofmann \cite{SH} first obtained the weighted estimate for $C_\Omega$ with $\Omega\in L^\infty(\mathbb S^{n-1})$ satisfying the cancellation condition \eqref{mean zero} (For the definition of Muckenhoupt $A_p$ weight, we refer the readers to Section 2).

It is not clear that to obtain the weighted $L^p(w)$ boundedness $(1<p<\infty, w\in A_p)$, whether the condition $\Omega\in L^\infty(\mathbb S^{n-1})$ can be weakened. 
Moreover, there is no known result for the  quantitative weighted bounds for Calderon commutator $\mathcal C_\Omega$ with rough kernel.

\smallskip
The main result of this paper is address these points. We prove that the condition on $\Omega$ can be reduced to $\Omega\in L^q(\mathbb S^{n-1})$ for $q_0<q\leq\infty$ with $q_0$ a fixed constant depending on $w$. We provide a quantitative weighted bounds for  $\mathcal{C}_\Omega$. We do not know whether this is sharp, but it is the best known quantitative result for this class of operators.   %(see \cite{HRT}).
\begin{theorem}\label{thm1}  Let
	$1<p<\infty,$  $w\in A_p,$
	 and $b\in Lip(\mathbb R^n).$
	 Suppose that $\mathcal{C}_\Omega$ with
	  $\Omega\in L^q(\mathbb S^{n-1}),\,q>\frac{1+\varepsilon}{\varepsilon}$ with $\varepsilon:=\frac{1}{2}\frac{c_n}{(w)_{A_p}}$ and satisfy \eqref{mean zero}.
	  Then we obtain that
\begin{equation}\label{N12w}
\ \|\mathcal{C}_\Omega f\|_{L^p(w)}\lesssim \|\Omega\|_{L^q}
 \{w\}_{A_p} (w)_{A_p} \|\nabla b\|_{L^\infty}\|f\|_{L^p(w)},
\end{equation}
where $(w)_{A_p}:=\max\Big\{[w]_{A_\infty},\,[w^{1-p'}]_{A_\infty}\Big\}$, $\{w\}_{A_p}:=[w]_{A_p}^{1\over p}\max\Big\{[w]_{A_\infty}^{1\over p'},\,[w^{1-p'}]_{A_\infty}^{1\over p}\Big\}$, and  $[w]_{A_r}$ ($1<r\leq\infty$) is the norm of $w$, given in \eqref{[Ap]}, and the implicit constant is independent of $b,f$ and $w$. In particular, we get $$ \|\mathcal{C}_\Omega f\|_{L^2(w)}\lesssim \|\Omega\|_{L^q}[w]_{A_2}^2 \|\nabla b\|_{L^\infty}\|f\|_{L^2(w)}.
$$
\end{theorem}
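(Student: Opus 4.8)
The plan is to reduce the study of $\mathcal{C}_\Omega$ to that of a family of rough singular integrals via the method of rotations, and then to invoke (and quantify) the sparse domination machinery for rough operators. First I would write, using $b\in Lip(\mathbb R^n)$ and the fundamental theorem of calculus, $b(x)-b(y) = \int_0^1 \nabla b\big(y+t(x-y)\big)\cdot(x-y)\,dt$, so that $\mathcal{C}_\Omega$ is an average over $t\in[0,1]$ of operators whose kernel is $\Omega(x-y)\langle x-y\rangle/|x-y|^{n+1}$ paired against $\nabla b$ evaluated along the segment. Carrying out the rotation $y\mapsto x-s\theta$, $s>0$, $\theta\in\mathbb S^{n-1}$, one expresses $\mathcal C_\Omega$ (up to the factor $\|\nabla b\|_{L^\infty}$) as a superposition in $\theta$ of directional Hilbert-transform-type operators with the angular weight $\Omega(\theta)$ and a Lipschitz multiplier; the cancellation condition \eqref{mean zero} with $N=0,1$ is exactly what is needed so that both the zeroth and first order terms in this expansion have mean zero on $\mathbb S^{n-1}$, hence each resulting convolution kernel satisfies \eqref{can of O} and is of rough Calder\'on--Zygmund type with kernel bounded by $C\|\Omega\|_{L^q(\mathbb S^{n-1})}$ in an $L^q$-averaged sense.

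Next I would establish a quantitative sparse bound: for appropriate $r>1$ depending only on $q$, and every compactly supported $f$, there is a sparse family $\mathcal S$ with
\begin{equation*}
|\langle \mathcal{C}_\Omega f, g\rangle| \lesssim \|\Omega\|_{L^q}\,\|\nabla b\|_{L^\infty}\sum_{Q\in\mathcal S} |Q|\,\langle |f|\rangle_{r,Q}\,\langle |g|\rangle_{r',Q},
\end{equation*}
with the number of dyadic scales in the relevant single-scale decomposition controlled logarithmically. This is the analogue for $\mathcal C_\Omega$ of the sparse domination of rough homogeneous singular integrals $T_\Omega$ with $\Omega\in L^\infty$ (Conde-Alonso--Culiuc--Di Plinio--Ou, Lerner), adapted to $\Omega\in L^q$: one splits the kernel dyadically in $|x-y|$, uses an $L^q$--$L^{q'}$ (or $L^2$) single-scale estimate with Fourier/oscillation decay coming from the non-degeneracy of the directional kernels, and sums. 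The key quantitative point is that the exponent $r=r(q)$ in the sparse form can be taken so that $1/r-1/q \to 0$ as $q\to\infty$, and the interplay of $r$ with the $A_p$ constant is precisely where the hypothesis $q>\frac{1+\varepsilon}{\varepsilon}$, $\varepsilon=\tfrac12 c_n/(w)_{A_p}$, enters: this choice guarantees $r'$ lies below the critical exponent dictated by $[w]_{A_\infty}$ and $[w^{1-p'}]_{A_\infty}$, so the weighted estimate for the sparse form is available.

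Finally, I would convert the sparse bound into the weighted estimate \eqref{N12w}. Applying the known sharp weighted bounds for sparse forms $\sum_Q |Q|\langle|f|\rangle_{r,Q}\langle|g|\rangle_{r',Q}$ — i.e. the $A_p$–$A_\infty$ estimates of Hyt\"onen--P\'erez type in the form involving $[w]_{A_p}^{1/p}$ and the mixed $A_\infty$ factors $[w]_{A_\infty}^{1/p'}$, $[w^{1-p'}]_{A_\infty}^{1/p}$ — produces exactly the constant $\{w\}_{A_p}(w)_{A_p}$ once the extra factor $(w)_{A_p}$ coming from optimizing over $r$ (the ``$q\to r$'' loss) is accounted for; specializing $p=2$, $w\in A_2$ gives $[w]_{A_2}^2$ after using $[w]_{A_\infty}\le [w]_{A_2}$, $[w^{-1}]_{A_\infty}\le[w]_{A_2}$. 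I expect the main obstacle to be the second step: proving the single-scale $L^q$-type estimate with enough quantitative decay for the rough directional kernels arising from the commutator structure (which carry the extra Lipschitz factor and the degree $n+1$ homogeneity), and tracking how the number of scales and the exponent $r$ depend on $q$ so that the threshold $q_0 = \frac{1+\varepsilon}{\varepsilon}$ comes out cleanly; the rotation reduction and the final weighted summation are comparatively routine.
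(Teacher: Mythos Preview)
Your proposal takes a route quite different from the paper's, and I believe it has genuine gaps.

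\textbf{The rotation/FTC reduction does not produce a convolution operator.} After writing $b(x)-b(y)=\int_0^1\nabla b\big(y+t(x-y)\big)\cdot(x-y)\,dt$, the resulting kernel is
\[
\frac{\Omega(x-y)}{|x-y|^{n+1}}(x-y)\cdot\int_0^1\nabla b\big(y+t(x-y)\big)\,dt,
\]
and the factor $\nabla b\big(y+t(x-y)\big)$ depends on $x$ and $y$ separately, not only on $x-y$. So this is \emph{not} a superposition of convolution-type rough singular integrals in the sense of \eqref{SIO}--\eqref{can of O}, and the method of rotations together with the CCDO/Lerner sparse machinery for $T_\Omega$ does not apply directly. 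The commutator structure does not disappear; any sparse domination must be proved for $\mathcal C_\Omega$ itself, which requires a separate single-scale analysis with the Lipschitz factor kept inside.

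\textbf{Sparse domination with $\Omega\in L^q$ does not yield $A_p$ bounds in the way you describe.} For rough $T_\Omega$ with $\Omega\in L^q(\mathbb S^{n-1})$, the known sparse forms (CCDO) are of type $\sum_Q|Q|\langle|f|\rangle_{1,Q}\langle|g|\rangle_{q',Q}$; their weighted consequences are for $w\in A_{p}\cap RH_{(q'/p)'}$--type classes, not for all $w\in A_p$. Your assertion that one can choose $r=r(q)$ so that the resulting sparse form gives a bound for every $w\in A_p$ with the threshold $q>\frac{1+\varepsilon}{\varepsilon}$, $\varepsilon=\tfrac12 c_n/(w)_{A_p}$, is the crux and is not substantiated. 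In the paper this threshold does \emph{not} come from optimizing the sparse exponent; it comes from a Stein--Weiss type interpolation with change of measures \emph{and of the kernel $\Omega$} (Lemma~\ref{sw1}): one interpolates an unweighted $L^p$ bound valid for $\Omega\in L^{q_0}$ against an $L^p(w^{1+\varepsilon})$ bound valid for $\Omega\in L^\infty$, using that $w^{1+\varepsilon}\in A_p$ with comparable constant for this particular $\varepsilon$. This three-lines argument, with $\Omega$ itself varying analytically, is what produces the condition $\tfrac1q=\tfrac{\varepsilon}{1+\varepsilon}\tfrac1{q_0}$ and hence $q>\tfrac{1+\varepsilon}{\varepsilon}$. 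Without such an interpolation you will not recover the stated threshold nor the constant $\{w\}_{A_p}(w)_{A_p}$.

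\textbf{What the paper actually does.} The argument follows the two-step scheme of Hyt\"onen--Roncal--Tapiola: decompose $T_\Omega^1=\sum_{j\ge1}(T^N_{1,j}+T^N_{2,j})$ via a Littlewood--Paley partition with jumps $N(j)$, and correspondingly $\mathcal C_\Omega=\sum_{j}([b,T^N_{1,j}]+[b,T^N_{2,j}])$. For each $j$ one shows (i) an unweighted bound $\|[b,T^N_{i,j}]\|_{L^p\to L^p}\lesssim 2^{-\theta N(j-1)}$ (needing only $\Omega\in L^{q_0}$ or $L^1$ with the cancellation \eqref{mean zero}), obtained by commutator splitting and a multiplier lemma (Lemma~\ref{CD1}); and (ii) that $[b,T^N_{i,j}]$ is an $\omega_{i,j}$-Calder\'on--Zygmund operator with $\|\omega_{i,j}\|_{Dini}\lesssim(1+N(j))$, whence by Lemma~\ref{lem 4} one gets $\|[b,T^N_{i,j}]\|_{L^p(w)\to L^p(w)}\lesssim(1+N(j))\{w\}_{A_p}$. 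Interpolating (i) and (ii) via Lemma~\ref{sw1} (for $i=1$) or Lemma~\ref{sw} (for $i=2$), choosing $N(j)=2^j$, and summing the geometric series produces the extra factor $(w)_{A_p}$ and the final bound \eqref{N12w}. None of these ingredients appears in your outline.
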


%{\color{blue} \bf TO BE CONTINUE}
To compare with the previous closely related results, we point out that the  sharp quantitative weighted bounds for singular integral with rough kernels have been studied intensively in the last three years with the key tool sparse domination (pointwise version originated in \cite{Lac15}).
Let $T_{\Omega}$ be the homogeneous singular integral operator defined by
\begin{align}\label{ts}
T_{\Omega}
f(x)=p.v.\int_{{\Bbb R}^n}\frac{\Omega(y')}{|y|^n}f(x-y)dy,
\end{align}
where $\Omega$ is
homogeneous of degree zero, integrable on $\mathbb S^{n-1}$  and
 satisfies the cancelation condition
$
\int_{\mathbb S^{n-1}}\Omega(y')d\sigma(y')=0.
$
Among these sharp quantitative weighted bounds for $T_\Omega$, we would like to highlight that
Hyt\"{o}nen--Roncal--Tapiola  \cite{HRT} first proved that when $\Omega\in L^\infty(\mathbb S^{n-1})$,
  \begin{align}\label{T}
  \|T_\Omega\|_{L^2(w)\rightarrow L^2(w)}\le C_n\|\Omega\|_{L^\infty}[w]_{A_2}^2.
  \end{align}
They introduced a two-step technique involving pointwise sparse domination  for Dini-type Calder\'on--Zygmund kernels, a Littlewood--Paley decomposition along the lines of \cite{DR86} and interpolation with change of measure from \cite{sw}.
 Later,
Conde-Alonso--Culiuc--Di Plinio--Ou \cite{CCDO} proved a sparse domination for the bilinear forms associated with $T_\Omega$ with $\Omega \in L^q(\mathbb S^{n-1})$ for $1<q\leq \infty$ satisfying the cancellation conditions, which leads to  quantitative weighted bounds for $T_\Omega$ with $\Omega \in L^\infty(\mathbb S^{n-1})$ (see (1.5) in Corollary A.1, \cite{CCDO}), extending previous result of \cite{HRT}.
Lerner \cite{Ler 0} provided another different approach to get the sparse domination in \cite{CCDO} via showing a weak type estimate for $T_\Omega$ with $\Omega \in L^\infty(\mathbb S^{n-1})$.

Recently \eqref{T} was also extended to maximal singular integrals $T_\Omega^\ast$ by Di Plinio, Hyt\"{o}nen and Li \cite{DHL} and Lerner \cite{Ler 2} via sparse domination, which gives
  \begin{align}\label{Tmax}
  \|T_\Omega^*\|_{L^2(w)\rightarrow L^2(w)}\le C_n\|\Omega\|_{L^\infty}[w]_{A_2}^2.
  \end{align}

We point out that by taking into account of our 
auxiliary result on interpolation, to obtain the $L^p(w)$-boundedness of $T_\Omega$, the kernel condition in \cite{HRT} and \cite{CCDO} can be reduced to our condition as in Theorem \ref{thm1}. 

Moreover, our result Theorem \ref{thm1}  is the first one to study this quantitative weighted bounds for Calder\'{o}n commutator $\mathcal C_\Omega$ with rough kernel.

\subsection{Approach and techniques}

{

To prove Theorem \ref{thm1}, we borrow the idea from \cite{HRT} via using a two-step approach involving the know result for Dini-type Calder\'on--Zygmund kernels. However, their techniques on decomposition of $T_\Omega$ is not applicable to Calder\'on commutator $C_\Omega$ since the symbol $b \in Lip(\mathbb R^n)$ is also involving in $\mathcal C_\Omega$. 
Moreover, to weaken the assumption on $\Omega$, we need to provide new ingredients.  To overcome this problem, we provide the following methods:
\begin{itemize}

\item Littlewood--Paley decomposition:

We writie $\mathcal{C}_\Omega f=[b, T_{\Omega}^1]f=b(x)T_{\Omega}^1f(x)-T_{\Omega}^1(bf)(x)$,
with
$T_{\Omega}^1=\sum_{k \in \mathbb{Z}} T_{k} f$, where $T_{k} f= K_{k}\ast f$ with  $ K_{k}=\frac{\Omega(x^{\prime})}{|x|^{n+1}} \chi_{\{2^{k}<|x|\le2^{k+1}\}}$. And then by constructing the Littlewood--Paley decomposition of each $T_k$  as
$$T_k
 =\sum_{j=1}^{\infty}T_k \Delta_{k,N(j)} +\sum_{j=1}^{\infty}T_k\widetilde \Delta_{k,N(j)}=:\sum_{j=1}^{\infty} T_{1,j}^{N} + \sum_{j=1}^{\infty} T_{2,j}^{N},
$$
where $ \Delta_{k,N(j)}
=\sum_{i=N(j-1)+1}^{N(j)}\Delta_{k-i}^3$ and $ \widetilde\Delta_{k,N(j)}
=\sum_{i=-N(j)-1}^{-N(j-1)}\Delta_{k-i}^3$,
with $\Delta_i$ the convolution operator formed by a Schwartz function $\psi$ whose Fourier transform has compact support $(1/2,2)$, and $N(j)$ is the jump introduced in \cite{HRT}, we get
\begin{align*}
\mathcal{C}_\Omega f=\sum_{i=1}^{2} \sum_{j=1}^{\infty} [b, T_{i,j}^{N}] f .
\end{align*}

\item Revised interpolation:

Assume that $\varepsilon\in (0,1)$ and $1\leq p,\, q_0\leq\infty$, that $w$ is a positive weight, and that $T^\Omega$ is a sublinear operator associated with $\Omega$ satisfying
$$
\|T^\Omega f\|_{L^p}\leq k_0\|\Omega\|_{L^{q_0}({\Bbb S}^{n-1})}\|f\|_{L^p},
$$ and
$$
\|T^\Omega f\|_{L^p(w^{1+\varepsilon})}\leq k_1\|\Omega\|_{L^\infty({\Bbb S}^{n-1})}\| f\|_{L^p(w^{1+\varepsilon})}.
$$
 Then
$$
\|T^\Omega f\|_{L^p(w)}\leq k_0^{\frac{1}{1+\varepsilon}} k_1^{\frac{\varepsilon}{1+\varepsilon}}\|\Omega\|_{L^q({\Bbb S}^{n-1})}\| f\|_{L^p(w)},
$$
 where
$
\frac{1}{q}=\frac{\varepsilon}{1+\varepsilon}\frac{1}{q_0}.$

\item unweighted boundedness and quantitative weighted boundedness for $[b, T_{i,j}^{N}]$:

\textbf{Part 1. } We do not assume the cancellation condition \eqref{mean zero} on $\Omega.$ From Lemma \ref{sw1}, it suffices to build the following two versions of boundedness for $[b, T_{1,j}^{N}]:$
\begin{align*}
& \|[b, T_{1,\,j}^{N}]f\|_{L^p}\lesssim \|\nabla b\|_{L^\infty}\|\Omega\|_{L^{q_0}}2^{-\theta N(j-1)}\|f\|_{L^p}, \ \ q_0>1,\,  \theta\in(0,1),\\
 &  \|[b, T_{1,\,j}^{N}]f\|_{L^p(w)}\lesssim \|\nabla b\|_{L^\infty}\|\Omega\|_{L^{\infty}}(1+N(j)) \{w\}_{A_p}\|f\|_{L^p(w)},
 \end{align*}
which follows from proving that for every $b\in Lip(\mathbb R^n)$, for each fixed $j\in\mathbb N$, and $N(j)$, $[b, T_{1,\,j}^{N}]$
is a Dini-type Calder\'on--Zygmund operator associated with the Dini function $\omega_{1,j}(t)=\|\Omega\|_{L^\infty}\|\nabla b\|_{L^\infty}\min\{1,2^{N(j)}t\}$.

\textbf{Part 2.}  Suppose that $\Omega$ satisfies the cancellation condition \eqref{mean zero}, we only need $\Omega\in L^1({\Bbb S}^{n-1}),$ which is very rough. From Stein--Weiss \cite{sw} interpolation with change of measure (Lemma \ref{sw}), it suffices to build the following two versions of boundedness for $[b, T_{2,j}^{N}]:$
\begin{align*}
& \|[b, T_{2,\,j}^{N}]f\|_{L^2}\lesssim \|\nabla b\|_{L^\infty}\|\Omega\|_{L^1}2^{-\theta' N(j-1)}(1+N(j))\|f\|_{L^2},  \theta'\in(0,1),\\
 &  \|[b, T_{2,\,j}^{N}]f\|_{L^p(w)}\lesssim \|\nabla b\|_{L^\infty}\|\Omega\|_{L^1}(1+N(j)) \{w\}_{A_p}\|f\|_{L^p(w)},
 \end{align*}
which follows from proving that for every $b\in Lip(\mathbb R^n)$, for each fixed $j\in\mathbb N$, and $N(j)$, $[b, T_{2,\,j}^{N}]$
is a Dini-type Calder\'on--Zygmund operator associated with the Dini function $\omega_{2,j}(t)=\|\Omega\|_{L^1}\|\nabla b\|_{L^\infty}\min\{1,2^{N(j)}t\}$.

\end{itemize}

}

\subsection{Application}

As a direct application of Theorem \ref{thm1}, we obtain the following the  quantitative weighted bounds  of Calder\'{o}n-type.
For a function $b\in L_{\mathrm{loc}}(\mathbb{R}^n)$,  let $A$ be a linear operator on some
measurable function space. Then the commutator between $A$ and
$b$ is defined by
$[b,A]f(x):=b(x)Af(x)-A(bf)(x)$.
\begin{theorem}\label{thm2}  Suppose $1<p<\infty$,  $w\in A_p$, $b\in Lip(\mathbb R^n)$ and $f\in C_0^1(\mathbb R^n)$.  Let $[b,T_\Omega]$ with $T_\Omega$ satisfying \eqref{ts}.
        Suppose that $\Omega$ have locally integrable first-order derivatives, $\Omega$ and its partial derivatives belong locally to $ L^\infty.$ Then there exists a positive constant  $C$ such that
$$
\big\|[b, T_\Omega] (\nabla f)\big\|_{L^p(w)}\lesssim
(w)_{A_p}\{w\}_{A_p}\|\nabla b\|_{L^\infty}\|f\|_{L^p(w)}.$$
Furthermore, if $[b,T_{\Omega}]f$ has first-order derivatives
in $L^p(w)$, then  there exists a positive constant  $C$ such that
$$
\big\|\nabla [b, T_\Omega]  f\big\|_{L^p(w)}\lesssim
(w)_{A_p}\{w\}_{A_p}\|\nabla b\|_{L^\infty}\|f\|_{L^p(w)}.$$
\end{theorem}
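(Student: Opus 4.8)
The plan is to derive both inequalities directly from Theorem \ref{thm1}, by identifying $[b,T_\Omega](\nabla f)$ (and $\nabla[b,T_\Omega]f$) with a Calder\'on commutator $\mathcal C_{\widetilde\Omega}$ whose rough kernel $\widetilde\Omega$ belongs to every $L^q(\mathbb S^{n-1})$, $q\le\infty$, and satisfies the cancellation condition \eqref{mean zero}. This is exactly the mechanism Calder\'on used in \cite{C1} to pass from $\mathcal C_\Omega$ to $[b,T]\nabla$ and $\nabla[b,T]$. Concretely, for a scalar component one writes, for $f\in C_0^1(\mathbb R^n)$,
\begin{align*}
\partial_\ell\big(T_\Omega f\big)(x)
= \text{p.v.}\int_{\mathbb R^n}\frac{\Omega\big((x-y)'\big)}{|x-y|^n}\,\partial_{y_\ell}f(y)\,dy,
\end{align*}
and integrating by parts moves the derivative onto the kernel, producing a new homogeneous kernel of degree $-n-1$ of the form $\widetilde\Omega_\ell\big((x-y)'\big)/|x-y|^{n+1}$ with
$\widetilde\Omega_\ell(z') = (\partial_\ell\Omega)(z') + n\,z'_\ell\,\Omega(z')$ (after restoring homogeneity). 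The hypotheses — $\Omega$ has locally integrable first-order derivatives and $\Omega,\nabla\Omega\in L^\infty_{\mathrm{loc}}$ — guarantee $\widetilde\Omega_\ell\in L^\infty(\mathbb S^{n-1})\subset L^q(\mathbb S^{n-1})$ for all $q\le\infty$, in particular for $q>\frac{1+\varepsilon}{\varepsilon}$ with $\varepsilon=\frac12 c_n/(w)_{A_p}$ as in Theorem \ref{thm1}. The cancellation \eqref{can of O} on $\Omega$, together with the divergence-form structure of $\widetilde\Omega_\ell$, yields the vanishing of $\int_{\mathbb S^{n-1}}\widetilde\Omega_\ell(z')(z'_k)^N\,d\sigma(z')$ for $(k,N)\in\{1,\dots,n\}\times\{0,1\}$: the case $N=0$ is precisely \eqref{can of O} transported through the integration by parts, and the case $N=1$ comes from integrating by parts the identity $\int_{\mathbb S^{n-1}}\partial_\ell(\Omega(z')z'_k)\,d\sigma = 0$ (the tangential divergence of a smooth vector field integrates to zero on the sphere) and recombining terms. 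Hence $\partial_\ell T_\Omega$ is, up to harmless lower-order singular integral terms absorbed by the same cancellation, a commutator kernel of the type admitted by Theorem \ref{thm1}.

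From here the two claimed estimates follow. For the first, $[b,T_\Omega](\nabla f)=\sum_{\ell=1}^n [b,T_\Omega](\partial_\ell f)$; using the integration-by-parts identity inside the commutator (the boundary terms vanish since $f\in C_0^1$) rewrites each summand as $\mathcal C_{\widetilde\Omega_\ell}f$ plus, where needed, terms of the form $[b,T_{\Omega}]$-type pieces controlled by the cancellation, and then Theorem \ref{thm1} gives
\begin{align*}
\big\|[b,T_\Omega](\nabla f)\big\|_{L^p(w)}
\lesssim \sum_{\ell=1}^n \|\widetilde\Omega_\ell\|_{L^q}\,\{w\}_{A_p}(w)_{A_p}\|\nabla b\|_{L^\infty}\|f\|_{L^p(w)}
\lesssim (w)_{A_p}\{w\}_{A_p}\|\nabla b\|_{L^\infty}\|f\|_{L^p(w)},
\end{align*}
the constant absorbing $\|\nabla\Omega\|_{L^\infty_{\mathrm{loc}}}+\|\Omega\|_{L^\infty_{\mathrm{loc}}}$ and the dimensional factors. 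For the second inequality, one instead differentiates the commutator: assuming $[b,T_\Omega]f$ has first-order derivatives in $L^p(w)$, one computes $\partial_\ell [b,T_\Omega]f = (\partial_\ell b)\,T_\Omega f + [b,\partial_\ell T_\Omega]f$; the first term is bounded in $L^p(w)$ by $\|\nabla b\|_{L^\infty}$ times the (quantitative) weighted bound for the rough singular integral $T_\Omega$ itself, while the second term is again $\mathcal C_{\widetilde\Omega_\ell}f$ up to cancellation-controlled corrections, so Theorem \ref{thm1} applies. Summing over $\ell$ produces the stated bound.

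The main obstacle I anticipate is the careful bookkeeping of the integration by parts at the level of principal-value kernels: one must verify that the p.v.\ definition is preserved, that the boundary contributions genuinely vanish (this uses $f\in C_0^1$ together with the decay of the kernel), and — most delicately — that the new kernel $\widetilde\Omega_\ell$ really does inherit the full cancellation \eqref{mean zero} rather than only a weaker mean-zero property; getting the $N=1$ cancellation requires exploiting that $\Omega$ itself (not merely its mean) is odd or, more precisely, using the structural identity $\int_{\mathbb S^{n-1}}\mathrm{div}_{\mathbb S^{n-1}}(z'_k\,\Omega(z')\,e_\ell)\,d\sigma=0$. A secondary technical point is justifying the commutation of $\partial_\ell$ with the principal value and the a.e.\ limit defining $T_\Omega$, which is standard for $\Omega,\nabla\Omega\in L^\infty_{\mathrm{loc}}$ and $f\in C_0^1$ but must be stated. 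Once these identifications are in place, no new harmonic analysis is needed — the entire quantitative weighted content is imported wholesale from Theorem \ref{thm1}.
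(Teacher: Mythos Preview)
Your overall strategy matches the paper's: both inequalities are reduced to Theorem~\ref{thm1} applied to the Calder\'on commutator $[b,\nabla T_\Omega]$ (with kernel $\nabla K$, $K(x)=\Omega(x')/|x|^n$), plus a residual piece involving $T_\Omega$ alone. Your treatment of the second inequality is exactly the paper's decomposition $\partial_\ell[b,T_\Omega]f=(\partial_\ell b)\,T_\Omega f+[b,\partial_\ell T_\Omega]f$.

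There is, however, a gap in the first inequality. Carrying out the integration by parts you describe actually gives
\[
[b,T_\Omega](\partial_\ell f)=[b,\partial_\ell T_\Omega]f+T_\Omega\big((\partial_\ell b)f\big),
\]
which is the paper's identity $[b,T_\Omega]\nabla=-T_\Omega[b,\nabla]+[b,\nabla T_\Omega]$ together with $[b,\nabla]f=-(\nabla b)f$. The residual term $T_\Omega((\partial_\ell b)f)$ is \emph{not} a ``$[b,T_\Omega]$-type piece controlled by the cancellation,'' and Theorem~\ref{thm1} does not apply to it; it is a rough singular integral acting on $(\partial_\ell b)f$, and one needs the quantitative bound $\|T_\Omega\|_{L^p(w)\to L^p(w)}\lesssim\|\Omega\|_{L^\infty}\{w\}_{A_p}(w)_{A_p}$ from \cite{HRT} (Theorem~1.4 there). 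You invoke exactly this tool for the second inequality but omit it for the first, so the displayed bound that cites only Theorem~\ref{thm1} is incomplete.

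On verifying \eqref{mean zero} for the new kernel, your spherical-divergence route can be made to work, but the paper's argument is shorter and sidesteps the worry you flag about the $N=1$ case. Since $\widehat{\nabla K}(\xi)=i\xi\,\widehat K(\xi)$ and $(x_k\nabla K)^\wedge(\xi)=i\partial_{\xi_k}\big(i\xi\,\widehat K(\xi)\big)$, both expressions vanish at $\xi=0$ (using only that $\widehat K$ is bounded, which follows from \eqref{can of O}); this gives $\int_{\mathbb S^{n-1}}(x_k')^N\nabla K(x')\,d\sigma=0$ for $N\in\{0,1\}$ directly, without any parity assumption on $\Omega$.
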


\begin{remark}
Our technique here can also be adapted to the study of the maximal commutator with rough kernels and the commutators with fractional differentiation, and to obtain quantitative weighted bounds. However, due to the different conditions in those settings and the length of the proofs, we will provide these results in the subsequent paper.
\end{remark}

\noindent{\bf Notation}. Throughout the whole paper, $p'=p/(p-1)$ represents the conjugate index of $p\in [1,\infty)$; $X\lesssim Y$ stands for $X\le C Y$ for a constant $C>0$ which is independent of the essential variables living on $X\ \&\ Y$; and $X\approx Y$ denotes  $X\lesssim Y\lesssim X$.

\section{Fundamental lemmas}

We first recall the definition and some properties of $A_p$ weight on $\mathbb R^n$. Let $w$ be a non-negative locally integrable function defined on $\mathbb R^n$. We say $w\in A_1$ if there is a constant $C>0$ such that $M(w)(x)\le Cw(x)$, where $M$ is the Hardy-Littlewood maximal function. Equivalently, $w\in A_1$ if and only if there is a constant $C>0$ such that for any cube $Q$
$$
\frac1{|Q|}\int_Qw(x)dx\le C\inf_{x\in Q}w(x).
$$
For $1<p<\infty$, we say that $w\in A_p$ if there exists a constant $C>0$ such that
\begin{align}\label{[Ap]}
[w]_{A_p}:=\sup_Q\bigg(\frac1{|Q|}\int_Qw(x)dx\bigg)\bigg(\frac1{|Q|}\int_Qw(x)^{1-p'}dx\bigg)^{p-1}\le C.
\end{align}
 We will adopt the following definition for the $A_\infty$ constant for a weight $w$ introduced by Fujii \cite{Fu}, and the later by Wilson \cite{Wi}:
 $$
[w]_{A_\infty}:=\sup_Q\frac{1}{w(Q)}\int_QM(w\chi_Q)(x)\,dx.$$
Here $w(Q):=\int_Qw(x)\,dx,$ and the supremum above is taken over all cubes with edges parallel to the coordinate axes. When the supremum is finite, we will say that $w$ belongs to the $A_\infty$ class.
 $A_\infty:=\bigcup_{p\ge1} A_p$. It is well known that if $w\in A_\infty$, then there exist $\delta\in(0,1]$ and $C>0$ such that for any interval $Q$ and measurable subset $E\subset Q$
$$
\frac{w(E)}{w(Q)}\le C\bigg(\frac{|E|}{|Q|}\bigg)^\delta.
$$

Denote by
\begin{align}\label{(Ap)}
(w)_{A_p}:=\max\Big\{[w]_{A_\infty},\,[w^{1-p'}]_{A_\infty}\Big\}.
\end{align}
and
\begin{align}\label{Ap brace}
\{w\}_{A_p}:=[w]_{A_p}^{1\over p}\max\Big\{[w]_{A_\infty}^{1\over p'},\,[w^{1-p'}]_{A_\infty}^{1\over p}\Big\}.
\end{align}

Recall from \cite{HRT} that
%using the facts that $[w^{1-p'}]_{A_p'}^{1/p'}=[w]_{A_p}^{1/p}$ and $[w]_{A_\infty}\le C[w]_{A_p}$(see \cite{HP}),
we have $$(w)_{A_p}\leq \tilde c_n \{w\}_{A_p} \leq \tilde c_n [w]_{A_p}^{\max\{1,1/(p-1)\}}.$$

Let us begin with presenting some auxiliary lemmas and their proofs, which will play a key role in proving Theorems \ref{thm1} and  \ref{thm2}.

We first recall some definitions.
A modulus of continuity is a function $\omega:[0,\infty)\rightarrow[0,\infty)$ with $\omega(0)$ that is subaddtive in the sense that
$$u\le t+s\Rightarrow \omega(u)\le \omega(t)+\omega(s).$$
Substituting $s=0$ one sees that $\omega(u)\le \omega(t)$ for all $0\le u\le t.$
Note that the composition and sum of two modulus of continuity is again a modulus of continuity.
In particular, if $\omega(t)$ is a modulus of continuity and $\theta\in (0,1),$ then $\omega(t)^\theta$ and $\omega(t^\theta)$ are also moduli of continuity.
 The Dini norm of a modulus of continuity are defined by setting
 \begin{align} \|\omega\|_{Dini}:=\int_0^1\omega(t)\frac{dt}{t}<\infty.
\end{align} For any $c>0$ the integral can be equivalently(up to a $c$-dependent multiplicative constant) replaced by the sum over $2^{-j/c}$ with $j\in \Bbb N.$
The basic example is $\omega(t)=t^\theta.$

 Let $T$ be a bounded linear operator on $L^2({\Bbb R}^n)$ represented as
  \begin{align} Tf(x)=\int_{{\Bbb R}^n}K(x,y)f(y)\,dy,\,\forall x \notin {\rm supp} f.
\end{align} We say that $T$ is an $\omega$-Calder\'{o}n-Zygmund   operator if the kernel  $K$ satisfies the following size and smoothness conditions:

For any $x,y\in {\Bbb R}^n\backslash \{0\}$,
\begin{align} \label{K0}|K(x,y)|\le \frac{C_K}{|x-y|^n},\,x\neq y,
\end{align}

For any $h\in {\Bbb R}^n$ with $2|h|\le |x-y|$,
\begin{align}\label{K1}  |K(y,x+h)-K(y,x)|+|K(x+h,y)-K(x,y)|\le \frac{\omega(|h|/|x-y|)}{|x-y|^n}.
\end{align}

\begin{lemma}[Theorem 1.3, \cite{HRT}]\label{lem 4} Let $T$  be an  $\omega$-Calder\'{o}n-Zygmund operator with $\omega$ satisfies the Dini condition. Then for $1<p<\infty,\,w\in A_p$, then
\begin{align*}\|Tf\|_{L^p(w)}\le C_{n,p}(\|T\|_{L^2\rightarrow L^2}+C_K+\|\omega\|_{Dini})\{w\}_{A_p}\|f\|_{L^p(w)}. \end{align*}
\end{lemma}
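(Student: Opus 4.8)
The plan is to deduce Lemma~\ref{lem 4} from a pointwise \emph{sparse domination} for $\omega$-Calder\'on--Zygmund operators satisfying the Dini condition, followed by the sharp weighted estimate for sparse operators. Throughout set $C_T:=\|T\|_{L^2\to L^2}+C_K+\|\omega\|_{Dini}$. The first step is to show that for every cube $Q_0\subset\mathbb R^n$ and every $f\in L^1(\mathbb R^n)$ with $\operatorname{supp}f\subset Q_0$ there is a sparse family $\mathcal S=\mathcal S(f,Q_0)$ of dyadic subcubes of $Q_0$ (cubes carrying pairwise disjoint major subsets $E_Q\subset Q$, $|E_Q|\ge\frac12|Q|$) such that
\[
|T f(x)|\ \le\ c_n\,C_T\sum_{Q\in\mathcal S}\Big(\frac1{|Q|}\int_Q|f|\Big)\chi_Q(x)\qquad\text{for a.e. }x\in Q_0 .
\]
The engine here is the grand maximal truncated operator $\mathcal M_Tf(x):=\sup_{Q\ni x}\,\operatorname{ess\,sup}_{\xi\in Q}|T(f\chi_{\mathbb R^n\setminus 3Q})(\xi)|$, for which one proves the weak-type bound $\|\mathcal M_Tf\|_{L^{1,\infty}}\le c_n\,C_T\|f\|_{L^1}$; this is the only place the Dini condition is used, entering through the Calder\'on--Zygmund decomposition, where the smoothness term is summed over dyadic scales as $\sum_{k\ge1}\omega(2^{-k})\approx\|\omega\|_{Dini}$. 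Combining this with the classical weak $(1,1)$ estimate for $T$ itself (the same Calder\'on--Zygmund argument, with constant $\le c_n C_T$), a recursive stopping-time construction in the spirit of Lerner--Nazarov produces $\mathcal S$.

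Next I would pass from the local estimate to a global one: by a standard argument using finitely many (say $3^n$) translated/dilated dyadic lattices one covers $\mathbb R^n$ and glues the families $\mathcal S(f,Q_0)$ into a single sparse family $\mathcal S$ with $|Tf(x)|\le c_n\,C_T\,\mathcal A_{\mathcal S}(|f|)(x)$, where $\mathcal A_{\mathcal S}g:=\sum_{Q\in\mathcal S}\langle g\rangle_Q\chi_Q$. It then suffices to invoke the sharp mixed $A_p$--$A_\infty$ bound for sparse operators: for $1<p<\infty$ and $w\in A_p$,
\[
\|\mathcal A_{\mathcal S}g\|_{L^p(w)}\ \le\ c_{n,p}\,[w]_{A_p}^{1/p}\max\Big\{[w]_{A_\infty}^{1/p'},\,[w^{1-p'}]_{A_\infty}^{1/p}\Big\}\,\|g\|_{L^p(w)}=c_{n,p}\{w\}_{A_p}\|g\|_{L^p(w)},
\]
which is proved by testing the trilinear sparse form against its dual, applying the Carleson embedding theorem with the Carleson constant of the stopping cubes controlled by $[w]_{A_\infty}$, and running the symmetric argument with $w^{1-p'}$ in place of $w$. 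Applying this with $g=|f|$ yields exactly the asserted inequality.

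The delicate point is the first step: one must run the Calder\'on--Zygmund argument for $T$ and for $\mathcal M_T$ while keeping track of constants, so that the kernel smoothness contributes precisely the Dini norm $\|\omega\|_{Dini}$ (and not, say, the single value $\omega(1)$ or a divergent sum of $\omega(2^{-k})$). Once the weak-type $(1,1)$ bounds with constant $\le c_n C_T$ are in hand, the stopping-time construction of $\mathcal S$ and the passage to the weighted bound via the sharp sparse estimate are by now routine; this is, in essence, the argument of \cite{HRT}, reproduced here for completeness.
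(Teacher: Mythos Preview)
Your outline is correct and is essentially the argument of \cite{HRT}. Note, however, that the present paper does not supply its own proof of Lemma~\ref{lem 4}: the lemma is quoted verbatim as Theorem~1.3 of \cite{HRT} and used as a black box. Your sketch---pointwise sparse domination via the weak $(1,1)$ bound for the grand maximal truncation $\mathcal M_T$ (with constant $c_nC_T$ coming from the Dini sum $\sum_k\omega(2^{-k})$), followed by the sharp mixed $A_p$--$A_\infty$ estimate for sparse averaging operators---is precisely the route taken in \cite{HRT}, so there is nothing to compare beyond that.
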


\begin{lemma}[\cite{CD1}]\label{CD0}\, For $b\in Lip(\mathbb{R}^n).$  Let $\psi\in {\mathscr
S}(\mathbb{R}^n)$ be a radial function such that ${\rm supp}\,\widehat{\psi}
\subset\{1/2\le |\xi|\le 2\}.$  Define the multiplier
operator $\Delta_j$ by $ \widehat{\Delta_jf}(
\xi)=\widehat{\psi}(2^{j}\xi)\widehat{f}(\xi)$ for $j\in \Bbb Z.$ Then for
$1<p<\infty$,
 we have  $$\bigg\|\bigg(\dsum_{j\in \Bbb Z}2^{-2j}|[b,\Delta_j]f|^2\bigg)^{1/2}\bigg\|_{L^p}\lesssim \|\nabla b\|_{L^\infty}\|f\|_{L^p}.$$ \end{lemma}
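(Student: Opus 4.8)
The final statement is Lemma~\ref{CD0}, which asserts the square-function estimate
$$\Big\|\Big(\sum_{j\in\ZZ}2^{-2j}|[b,\Delta_j]f|^2\Big)^{1/2}\Big\|_{L^p}\lesssim\|\nabla b\|_{L^\infty}\|f\|_{L^p}$$
for $b\in Lip(\RR^n)$, $1<p<\infty$. Here is how I would prove it.

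\medskip

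\textbf{Approach.} The plan is to realize $2^{-j}[b,\Delta_j]$ itself as a nice vector-valued Calder\'on--Zygmund operator and then invoke the classical $\ell^2$-valued (Fefferman--Stein / Benedek--Calder\'on--Panzone) theory. First I would write out the kernel of $[b,\Delta_j]$: since $\Delta_j$ has kernel $2^{-jn}\psi(2^{-j}\cdot)$ (up to normalization conventions), $[b,\Delta_j]f(x)=\int (b(x)-b(y))\,2^{-jn}\psi\big(2^{-j}(x-y)\big)f(y)\,dy$, so the kernel is $L_j(x,y)=(b(x)-b(y))\,2^{-jn}\psi\big(2^{-j}(x-y)\big)$. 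The mean-value inequality $|b(x)-b(y)|\le\|\nabla b\|_\infty|x-y|$ together with the Schwartz decay of $\psi$ gives, for the operator $2^{-j}[b,\Delta_j]$, the pointwise kernel bound
$$|2^{-j}L_j(x,y)|\lesssim\|\nabla b\|_\infty\,\frac{2^{-j}|x-y|}{2^{jn}}\cdot\frac{1}{(1+2^{-j}|x-y|)^{M}}\lesssim\frac{\|\nabla b\|_\infty}{|x-y|^n}\cdot\frac{(2^{-j}|x-y|)^{1-?}}{\dots},$$
and more importantly the square-function kernel $\big(\sum_j|2^{-j}L_j(x,y)|^2\big)^{1/2}\lesssim\|\nabla b\|_\infty|x-y|^{-n}$, because summing $(2^{-j}|x-y|)^2(1+2^{-j}|x-y|)^{-2M}$ over $j\in\ZZ$ is a geometric-type sum dominated by a constant times $|x-y|^{-2n}\cdot$ (after the $|x-y|$ factors) — the factor $|x-y|$ from the commutator exactly cancels the $2^{-j}$ weights in the dyadic regime $2^j\sim|x-y|$ and the sum converges on both tails. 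Likewise the smoothness (Hörmander) estimates in $x$ and in $y$ follow by differentiating $\psi$ and using $\nabla_x[(b(x)-b(y))\psi(\cdot)]=\nabla b(x)\psi(\cdot)+(b(x)-b(y))\nabla\psi(\cdot)$; the first term contributes $\|\nabla b\|_\infty\,2^{-jn}|\nabla\psi|$ and the second $\|\nabla b\|_\infty|x-y|\,2^{-j(n+1)}|\nabla\psi|$, and after multiplying by $2^{-j}$ and taking $\ell^2$ in $j$ one again gets the standard $\ell^2$-valued Hörmander condition with constant $\lesssim\|\nabla b\|_\infty$.

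\medskip

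\textbf{Key steps in order.} (1) Compute the kernel $L_j(x,y)$ of $[b,\Delta_j]$ and record the pointwise bounds on $2^{-j}L_j$ and on $2^{-j}\nabla_{x,y}L_j$ coming from $|b(x)-b(y)|\le\|\nabla b\|_\infty|x-y|$ and Schwartz decay of $\psi$ and $\nabla\psi$. (2) Establish the $L^2\to L^2(\ell^2)$ bound: by Plancherel, $\widehat{2^{-j}[b,\Delta_j]f}$ is controlled using that $\Delta_j$ is a Littlewood--Paley piece and $2^{-j}[b,\Delta_j]$ roughly differentiates $b$ once; concretely write $2^{-j}[b,\Delta_j]f = 2^{-j}\big(b\Delta_j f - \Delta_j(bf)\big)$ and use the paraproduct / almost-orthogonality decomposition, or simply note $[b,\Delta_j]=2^j\cdot(\text{CZ-type operator with kernel }\theta_j)$ where $\{2^{-j}[b,\Delta_j]\}_j$ is uniformly bounded on $L^2$ with square-function $L^2$ bound $\lesssim\|\nabla b\|_\infty^2\|f\|_2^2$ — this last inequality is essentially the statement of Lemma~\ref{CD0} for $p=2$ and is the genuine input; it can be derived from the identity $2^{-j}[b,\Delta_j]f = -2^{-j}\int(\nabla b\cdot z)\,\eta_j(z,x)\,dz$-type expansion combined with Plancherel and the finite-overlap of the supports of $\widehat\psi(2^j\cdot)$. (3) With the $L^2(\ell^2)$ bound and the $\ell^2$-valued size and Hörmander kernel bounds in hand, apply the vector-valued Calder\'on--Zygmund theorem to extend from $p=2$ to all $1<p<\infty$, with operator norm $\lesssim\|\nabla b\|_\infty$.

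\medskip

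\textbf{Main obstacle.} The routine part is the kernel bookkeeping in steps (1) and (3); the real content is step (2), the $L^2$ square-function estimate $\big\|\big(\sum_j 2^{-2j}|[b,\Delta_j]f|^2\big)^{1/2}\big\|_{L^2}\lesssim\|\nabla b\|_\infty\|f\|_{L^2}$. The difficulty is that $[b,\Delta_j]$ is not a Fourier multiplier, so Plancherel does not apply directly; one must exploit that $\Delta_j$ localizes $f$ to frequencies $\sim 2^{-j}$ and that commuting with the Lipschitz function $b$ costs exactly one derivative of $b$ while gaining a factor $2^{-j}$ — made rigorous either via a paraproduct decomposition $b\Delta_j f-\Delta_j(bf)=\sum(\text{high-high, high-low, low-high})$ and Littlewood--Paley/Carleson-measure estimates, or via the Coifman--Meyer-type first-order Taylor expansion $b(x)-b(y)=\int_0^1\nabla b(y+t(x-y))\cdot(x-y)\,dt$ inserted into the kernel, followed by Minkowski's integral inequality and a frequency-support almost-orthogonality argument to sum in $j$. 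Since this square-function bound is attributed to \cite{CD1}, I would cite it for $p=2$ and focus the written proof on leveraging it through the vector-valued CZ machinery for the full range of $p$; alternatively, if a self-contained argument is wanted, I would present the Taylor-expansion route as it most transparently exhibits the cancellation of the $2^{-j}$ weights against the $|x-y|$ factor and keeps $\|\nabla b\|_\infty$ as the only constant.
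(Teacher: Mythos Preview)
The paper does not prove this lemma at all: it is stated with a citation to \cite{CD1} and used as a black box. So there is no ``paper's own proof'' to compare against; your proposal is essentially an attempt to supply what the paper omits.

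Your overall architecture is the right one: realize $f\mapsto(2^{-j}[b,\Delta_j]f)_{j\in\mathbb Z}$ as an $\ell^2$-valued Calder\'on--Zygmund operator, verify the standard $\ell^2$-valued size and H\"ormander bounds on the kernel $(2^{-j}(b(x)-b(y))\psi_j(x-y))_j$ using $|b(x)-b(y)|\le\|\nabla b\|_\infty|x-y|$ and Schwartz decay, and then run the vector-valued theory to pass from one $L^p$ to all $1<p<\infty$. Steps (1) and (3) of your outline are routine and correct as written.

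The gap is in step (2), and you have partly identified it yourself. Two specific issues. First, it is circular to ``cite \cite{CD1} for $p=2$'': the lemma you are trying to prove \emph{is} the content of that citation, for all $p$ at once, so appealing to it for the anchor exponent is not a proof. Second, your suggested mechanism for the $L^2$ bound --- ``Plancherel and the finite-overlap of the supports of $\widehat\psi(2^j\cdot)$'' --- does not work as stated: finite overlap gives orthogonality of the $\Delta_j f$, but $[b,\Delta_j]f$ has no such frequency localization (multiplication by $b$ spreads the spectrum), so the $L^2(\ell^2)$ bound cannot be read off from Plancherel in one line. The pointwise bound $|2^{-j}[b,\Delta_j]f|\le\|\nabla b\|_\infty\,\Phi_j*|f|$ with $\Phi(z)=|z||\psi(z)|$ is also insufficient, since $\Phi$ has nonzero integral and the resulting square function diverges.

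A workable route for the $L^2$ anchor is your Taylor-expansion idea, but pushed one step further: writing $b(x)-b(y)=\int_0^1\nabla b((1-s)x+sy)\cdot(x-y)\,ds$ and setting $\tilde\psi^{(k)}(z)=z_k\psi(z)$ (whose Fourier transform $i\partial_k\widehat\psi$ is still supported in $\{1/2\le|\xi|\le 2\}$), one reduces to bounding, for each fixed $s$, the square function of operators with kernels $\partial_k b((1-s)x+sy)\,\tilde\psi^{(k)}_j(x-y)$. At the endpoints $s=0$ and $s=1$ these are, respectively, $\partial_k b(x)\cdot\tilde\Delta_j^{(k)}f$ and $\tilde\Delta_j^{(k)}(\partial_k b\cdot f)$, both of which are genuine Littlewood--Paley square functions and give the bound immediately. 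For intermediate $s$ one needs a further argument (a change of variables plus almost-orthogonality, a Carleson-measure estimate, or the vector-valued $T(1)$ theorem applied to the kernel you have already shown is standard). Any of these closes the gap; you should pick one and carry it out explicitly rather than gesture at it.
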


%\vskip0.2cm
\begin{lemma}\label{CD1}\, Let
	$(k,j)\in\mathbb Z\times\mathbb N$,
	$b\in Lip(\mathbb R^n),$ and  $m_{k,j} \in  C^\infty(\mathbb{R}^n)$. Suppose that $
\widehat{T_{k,j} f}(\xi)=m_{k,j}(\xi)\widehat{f}(\xi)$ and $
[b, T_{k,j}]f(x):=b(x)T_{k,j}f(x)-T_{k,j}(bf)(x).$
If for each $k,\,j$, the function $m_{k,j}(\xi)$ satisfies the following
condition:
$$
\begin{cases}
\|m_{k,j}\|_{L^\infty} \lesssim 2^{-k}2^{-\beta j},\quad {\rm where\ } \beta\ {\rm is\ a\ fixed\ positive\ constant\ independent\ of\ } j,k;\\[4pt]
\|\partial^\alpha m_{k,j}\|_{L^\infty} \lesssim 2^{k}\quad {\rm\ for\ any\ fixed\ multiindices}\ \alpha {\rm\ \ with\ } |\alpha|=2,
\end{cases}
$$
then
there exists a constant  $0<\lambda<1$ such that
$$
\|[b,T_{k,j}]f\|_{L^2} \lesssim   2
^{-\beta \lambda j}\|\nabla b\|_{L^\infty} \|f\|_{L^2}.
$$
\end{lemma}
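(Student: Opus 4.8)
The plan is to estimate the $L^2$ operator norm of $[b,T_{k,j}]$ by interpolating between two crude bounds: one that exploits only the $L^\infty$ bound on the symbol $m_{k,j}$ (giving decay $2^{-\beta j}$ but a loss in $b$), and one that exploits only the size of the second derivatives of $m_{k,j}$ (giving a gain of $2^{-k}$ but growth in $k$ or in frequency). Writing $M_{k,j}f = T_{k,j}f$ with symbol $m_{k,j}$, the commutator $[b,T_{k,j}]f = bT_{k,j}f - T_{k,j}(bf)$ has, since $b\in Lip(\mathbb R^n)$, a kernel/symbol-level description amenable to the standard ``commutator with a Fourier multiplier'' trick: on the Fourier side, $\widehat{[b,T_{k,j}]f}(\xi) = \int \bigl(m_{k,j}(\xi) - m_{k,j}(\eta)\bigr)\widehat b(\xi-\eta)\widehat f(\eta)\,d\eta$ (suitably interpreted since $b$ is only Lipschitz, one uses $\nabla b\in L^\infty$ and integration by parts), so the commutator behaves like $T_{k,j}$ composed with a first-order object whose ``symbol'' is a difference quotient of $m_{k,j}$.

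First I would prove the \emph{easy} bound: since $\|m_{k,j}\|_{L^\infty}\lesssim 2^{-k}2^{-\beta j}$, the operator $T_{k,j}$ is bounded on $L^2$ with norm $\lesssim 2^{-k}2^{-\beta j}$ by Plancherel, and hence
\[
\|[b,T_{k,j}]f\|_{L^2} \le \|b T_{k,j}f\|_{L^2} + \|T_{k,j}(bf)\|_{L^2}.
\]
This does not immediately work because $b$ is not bounded. The fix is to localize: $T_{k,j}$ has a kernel supported (essentially) at scale comparable to $2^{k}$ — more precisely, from $\|\partial^\alpha m_{k,j}\|_\infty\lesssim 2^k$ for $|\alpha|=2$ together with $\|m_{k,j}\|_\infty\lesssim 2^{-k-\beta j}$, one gets good control of the kernel $K_{k,j}$ and its first moments, and in particular the kernel of $[b,T_{k,j}]$ picks up a factor $\|\nabla b\|_{L^\infty}\cdot(\text{diameter of interaction})$. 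This yields the first estimate
\[
\|[b,T_{k,j}]f\|_{L^2}\lesssim 2^{-\beta j}\,\|\nabla b\|_{L^\infty}\,\|f\|_{L^2},
\]
where the gain $2^{-k}$ from the symbol size is eaten by the $2^{k}$ from the kernel support/the gradient of $b$. This is the bound with full $j$-decay but no $k$-decay.

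Second I would prove a \emph{crude} bound in the opposite direction, namely one that is uniform (or mildly growing) in $j$ but carries a favorable power of something we can sum. Here one uses the two-derivative hypothesis more seriously: writing $[b,T_{k,j}] = \sum_\ell [b, T_{k,j}\Delta_\ell]$ against a Littlewood--Paley partition, on frequency annulus $|\xi|\sim 2^{-\ell}$ the symbol is $O(\min\{2^{-k-\beta j}, 2^{k}2^{-2\ell}\})$, and Lemma \ref{CD0} controls $\bigl\|(\sum_\ell 2^{-2\ell}|[b,\Delta_\ell]f|^2)^{1/2}\bigr\|_{L^p}\lesssim\|\nabla b\|_{L^\infty}\|f\|_{L^p}$. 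Combining the pointwise symbol bounds with an almost-orthogonality / Cotlar-type argument across the scales $\ell$, summing the geometric series in $\ell$ around the balance point $2^{2\ell}\sim 2^{2k}2^{\beta j}$ produces a bound of the shape
\[
\|[b,T_{k,j}]f\|_{L^2}\lesssim 2^{-\beta j/2}\,\|\nabla b\|_{L^\infty}\,\|f\|_{L^2}
\]
—but more robustly one can aim for \emph{any} fixed gain $2^{-\beta\lambda j}$ with $\lambda<1$ by not optimizing, trading the square-function loss in the Littlewood--Paley step against the symbol decay. Then interpolating (or simply taking the geometric mean) between this and the first bound, or alternatively running the balance-point computation once and for all, gives the claimed
\[
\|[b,T_{k,j}]f\|_{L^2}\lesssim 2^{-\beta\lambda j}\,\|\nabla b\|_{L^\infty}\,\|f\|_{L^2}
\]
for some $0<\lambda<1$ depending only on $n$.

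The main obstacle I expect is handling the commutator structure with $b$ merely Lipschitz: one cannot treat $b$ as a bounded multiplication operator, so every estimate must either localize in space (exploiting compact support of the kernel $K_{k,j}$ at scale $2^k$ to replace $b$ by $b$ minus its average, gaining $\|\nabla b\|_{L^\infty}2^{k}$) or go through the vector-valued square-function bound of Lemma \ref{CD0}, which is exactly the tool designed to absorb the $\nabla b$. Reconciling these two mechanisms — the spatial localization that forces a $2^{k}$ loss, versus the frequency-side two-derivative decay that wants $\ell$ large, versus the requirement that the \emph{final} bound be independent of $k$ — is the delicate point, and it is resolved precisely by the geometric-series summation at the balance scale together with the observation that we only need \emph{some} $\lambda<1$, not the optimal one. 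A secondary technical nuisance is making the kernel estimates for $T_{k,j}$ from the symbol hypotheses rigorous (Bernstein/stationary phase), but this is routine given $\|\partial^\alpha m_{k,j}\|_\infty\lesssim 2^k$ on $|\alpha|=2$ and $\|m_{k,j}\|_\infty\lesssim 2^{-k-\beta j}$.
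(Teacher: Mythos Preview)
Your high-level instinct---balance two estimates, one carrying the $2^{-\beta j}$ decay and one coming from the second-derivative hypothesis---matches the paper's, but the implementation has a real gap and differs from what the paper does.

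The gap is in both of your building blocks. For the ``first bound'' you assert that the kernel $K_{k,j}=m_{k,j}^\vee$ is essentially supported at scale $2^k$; the hypotheses do \emph{not} give this. You only know $\|m_{k,j}\|_\infty\lesssim 2^{-k-\beta j}$ and $\|\partial^\alpha m_{k,j}\|_\infty\lesssim 2^k$ for $|\alpha|=2$, with no control on higher derivatives, no decay of $m_{k,j}$, and no vanishing at the origin. From this one cannot conclude any useful spatial localization of $K_{k,j}$, so replacing $b$ by a local oscillation at scale $2^k$ is unjustified. For the ``second bound'' you claim that on the annulus $|\xi|\sim 2^{-\ell}$ the symbol is $O(2^{k}2^{-2\ell})$; that would require $m_{k,j}(0)=0$ and $\nabla m_{k,j}(0)=0$, which is nowhere assumed. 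So neither endpoint of your interpolation stands as written, and invoking Lemma~\ref{CD0} does not rescue it because that lemma controls $[b,\Delta_\ell]$, not $[b,T_{k,j}\Delta_\ell]$.

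The paper's mechanism is a \emph{physical-space} decomposition of the kernel, and this is the missing idea. One writes $K_{k,j}=\sum_{l\ge 0}K_{k,j}^l$ with $K_{k,j}^l(x)=K_{k,j}(x)\varphi_l(x)$ for a dyadic partition $\varphi_l$ supported at scale $2^l$. Because $\varphi$ is supported away from the origin, $\widehat\varphi$ has all vanishing moments, and a second-order Taylor expansion of $m_{k,j}$ inside the convolution $\widehat{K_{k,j}^l}=m_{k,j}\ast\widehat{\varphi_l}$ gives $\|\widehat{K_{k,j}^l}\|_\infty\lesssim 2^{-2l}2^{k}$; Young's inequality gives the other endpoint $\|\widehat{K_{k,j}^l}\|_\infty\lesssim 2^{-k-\beta j}$. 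Now the kernel $K_{k,j}^l$ \emph{is} genuinely supported at scale $2^l$, so after a disjoint-cube decomposition of $f$ one may replace $b$ by $\tilde b=(b-b_{\tilde Q})\phi$ with $\|\tilde b\|_\infty\lesssim 2^{l}\|\nabla b\|_\infty$. Interpolating the two multiplier bounds with parameter $\theta$, multiplying by $2^l$, and summing the geometric series in $l$ (splitting at $l=k$ with two choices of $\theta$) yields $2^{-\beta\lambda j}\|\nabla b\|_\infty$ uniformly in $k$. In short: the localization you need is not at the single scale $2^k$ but at every scale $2^l$, and the second-derivative hypothesis enters not as a pointwise symbol bound on annuli but through the moment cancellation of the spatial cutoffs.
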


\begin{proof}
Let  $\varphi\in C_0^\infty(\mathbb{R}^n)$ be a  radial
function with
$
\text{supp}(\varphi)\subset \{1/2\leq |x| \leq 2\}.$ Let
$\sum_{l \in \mathbb Z}\varphi(2^{-l}x)=1,$ for $ |x|>0.$
We set
$\varphi_0(x)=\sum_{l=-\infty}^0\varphi (2^{-l}x)$,
$\varphi_l(x) =
\varphi(2^{-l}x),$ for $l\in\mathbb N.$
Denote by $K_{k,j} (x)=m_{k,j}^{\vee}(x)$ - \text{the inverse Fourier transform of}\
$m_{k,j}.$
Then decompose  each $K_{k,j}$ as follows
$$\begin{array}{cl}
K_{k,j}(x)=K_{k,j}(x)\varphi_0(x)+\dsum_{l=1}^\infty
K_{k,j}(x)\varphi_l(x)=:\dsum_{l=0}^\infty K_{k,j}^l(x),
\end{array}$$
where
$$\begin{array}{cl}
\widehat{K_{k,j} ^l}(x)=\dint_{\mathbb{R}^n} m_{k,j}
(x-y)\widehat{\varphi_l}(y)\,dy.\end{array}$$
Since $\text{supp}(\varphi)\subset \{1/2\leq |x| \leq 2\}$, we see that for all multi-index $\vartheta$,
\begin{equation}\label{eta}
\int_{\mathbb{R}^n}\widehat{\varphi}(y)\ y^\vartheta\,dy=0.
\end{equation}
By using this cancellation condition \eqref{eta}, along with Taylor's expansion of $m_{k,j}(x-y)$ around $y$, we get that
\begin{align}\label{eta1}
\|\widehat{K_{k,j} ^l}\|_{L^\infty}&\le\dsum_{|\alpha|=2}\|\partial^\alpha m_{k,j}\|_{L^\infty}\dint_{\mathbb{R}^n} |y|^{2}|\widehat{\varphi_{l}}(y)|\,dy\\&
 = \nonumber\dsum_{|\alpha|=2}\|\partial^\alpha m_{k,j}\|_{L^\infty}
\dint_{\mathbb{R}^n} |{2^{-l}}y|^{2}|\widehat{\varphi}(y)|\,dy \\&
\lesssim \nonumber 2^{k}2^{-2l}\dint_{\mathbb{R}^n}
 |y|^{2}|\widehat{\varphi}(y)|\,dy \\&
 \lesssim \nonumber 2^{-2l}2^{k}.
\end{align}
On the other hand, by the Young inequality we have
\begin{align}\label{eta2}
\|\widehat{K_{k,j}
^l}\|_{L^\infty}&=\|m_{k,j}\ast\widehat{\varphi_l}\|_{L^\infty}\leq
\|m_{k,j}\|_{L^\infty}\|\widehat{\varphi_l}\|_{L^1}\lesssim
2^{-k} 2^{-\beta j}.
\end{align}
Therefore, interpolating between \eqref{eta1} and \eqref{eta2} gives that
\begin{align}\label{k1}
\|\widehat{K_{k,j} ^l}\|_{L^\infty} \lesssim2^{-2\theta
l}2^{k (2\theta-1) }
2^{-(1-\theta)\beta j}\qquad {\rm\ for\ any\ } 0<\theta<1.
\end{align}

 Denote by $T_{k,j}^lf(x)=K_{k,j}
^l\ast f(x).$ We now estimate $[b,T_{k,j}^l ]$ - the commutator of
the operator $T_{k,j}^l.$ Decompose $\mathbb{R}^n$ into a grid
of non-overlapping cubes with side length $2^l$ - i.e. -  $$\mathbb{R}^n =
\bigcup _{d=-\infty}^\infty Q_d$$ and set $$f_d := f\chi_{Q_d}.$$ Then
$$
f(x) = \dsum_{d=-\infty}^\infty f_d(x)\qquad \text{for a.e.}\ \ x \in \mathbb{R}^n.
$$
From the support condition of $T_{k,j}^l$,  it is obvious
that $$
\text{supp}\big([b,T_{k,j}^l]f_d\big) \subset 2nQ_d$$ and hence, the supports
of $\big\{[b,T_{k,j}^l ]f_d \big\}_{d=-\infty}^{+\infty}$ have bounded
overlaps. So we have the following almost orthogonality property
$$\begin{array}{cl}
\big\|[b,T_{k,j} ^l]f\big\|_{L^2}^2 \lesssim\dsum_{d=-\infty}^\infty
\big\|[b,T_{k,j} ^l] f_d\big\|_{L^2}^2.
\end{array}$$
Thus, we may assume that supp$(f)\subset Q$ for some cube $Q$ with $\ell(Q)=2^l.$ Upon choosing such a function $\phi \in C_0^\infty(\mathbb{R}^n)$ satisfying
$ 0 \leq
\phi\leq 1,$
$\text{supp}(\phi)
\subset 100nQ.$ Let
$\phi(x)=1,$ for $ x\in 30nQ$.
Denote by $\widetilde{Q}=200nQ,$ and
$\widetilde{b} =
(b(x)-b_{\widetilde{Q}})\phi(x),
$
we obtain that
$$\begin{array}{cl}
\|[b,T_{k,j}]f\|_{L^2}\le\dsum_{l\ge 0}\|[b,T_{k,j} ^l]f\|_{L^2}\leq \dsum_{l\ge 0}\big\|\widetilde{b}T_{k,j}^lf\big\|_{L^2}+\dsum_{l\ge 0}\big\|T_{k,j}^l(\widetilde{b}f)\big\|_{L^2}.
\end{array}$$
By using \eqref{k1} with  $1/2<\theta_1<1$  and with $0<\theta_2<1/2$, and applying the fact that
$\|\widetilde{b}\|_{L^\infty}\le 2^l\|\nabla b\|_{L^\infty} ,$
we obtain that there exists a constant $0<\lambda<1,$ such that
\begin{align*}
\dsum_{l\ge 0}\|\widetilde{b}T_{k,j}^lf\|_{L^2} &\leq
\dsum_{l\ge k}\|\widetilde{b}\|_{L^\infty}\|T_{k,j}^lf\|_{L^2}+\dsum_{l<k}\|\widetilde{b}\|_{L^\infty}\|T_{k,j}^lf\|_{L^2}\\
&\leq \dsum_{l\ge k}2^l\|\nabla b\|_{L^\infty} 2^{-2\theta_1l}2^{k(2 \theta_1-1) }2^{-j(1-\theta_1)\beta} \| f\|_{L^2}\\
&\quad+\dsum_{l< k} 2^l\|\nabla b\|_{L^\infty}  2^{-2\theta_2l}2^{k (2\theta_2-1) }2^{-j(1-\theta_2)\beta}\| f\|_{L^2}\\
&\leq (2^{-j(1-\theta_1)\beta}+2^{-j(1-\theta_2)\beta}) \|\nabla b\|_{L^\infty} \| f\|_{L^2}\\
&\simeq2^{- \lambda\beta j}\|\nabla b\|_{L^\infty} \| f\|_{L^2}.
\end{align*}

 Similarly, we can get
$$\begin{array}{cl}
\dsum_{l\ge 0}\|T_{k,j}(\widetilde{b}f)\|_{L^2}
\lesssim 2^{-\beta\lambda j}\|\nabla b\|_{L^\infty} \|f\|_{L^2}.
\end{array}$$
As a consequence, we obtain that
$$
\|[b,T_{k,j}]f\|_{L^2}\lesssim2^{-\beta\lambda j}\|\nabla b\|_{L^\infty} \|f\|_{L^2}.
$$
The proof of Lemma \ref{CD1} is complete.
\end{proof}

\begin{lemma} \label{sw1} Assume that $\varepsilon\in (0,1)$ and $1\leq p,\, q_0\leq\infty$, that $w$ is a positive weight, and that $T^\Omega$ is a sublinear operator associated with $\Omega$ satisfying
$$
\|T^\Omega f\|_{L^p}\leq k_0\|\Omega\|_{L^{q_0}({\Bbb S}^{n-1})}\|f\|_{L^p},
$$ and
$$
\|T^\Omega f\|_{L^p(w^{1+\varepsilon})}\leq k_1\|\Omega\|_{L^\infty({\Bbb S}^{n-1})}\| f\|_{L^p(w^{1+\varepsilon})}.
$$
 Then
$$
\|T^\Omega f\|_{L^p(w)}\leq k_0^{\frac{1}{1+\varepsilon}} k_1^{\frac{\varepsilon}{1+\varepsilon}}\|\Omega\|_{L^q({\Bbb S}^{n-1})}\| f\|_{L^p(w)},
$$
 where
$
\frac{1}{q}=\frac{\varepsilon}{1+\varepsilon}\frac{1}{q_0}.
$\end{lemma}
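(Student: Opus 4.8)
The plan is to reduce the weighted inequality to an interpolation between the two given endpoint estimates by treating the weight $w$ as a measure and splitting $\Omega$ rather than $f$. First I would fix the exponent relation: given $\varepsilon\in(0,1)$ and $1\le q_0\le\infty$, set $\tfrac1q=\tfrac{\varepsilon}{1+\varepsilon}\tfrac1{q_0}$, so that $q_0<q\le\infty$ and the two interpolation weights are $\theta_0:=\tfrac1{1+\varepsilon}$ and $\theta_1:=\tfrac{\varepsilon}{1+\varepsilon}$ with $\theta_0+\theta_1=1$. The key observation is that $L^p(w)$ sits "between" $L^p$ (the unweighted space) and $L^p(w^{1+\varepsilon})$ in the sense that $w=(1)^{\theta_0}\cdot(w^{1+\varepsilon})^{\theta_1}$, i.e. $w=w^{(1+\varepsilon)\theta_1}$, which is exactly the Stein--Weiss change-of-measure setup (Lemma \ref{sw}): one endpoint has measure $dx$, the other has measure $w^{1+\varepsilon}\,dx$, and the target measure $w\,dx$ is the geometric mean with parameters $\theta_0,\theta_1$. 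Simultaneously, since $T^\Omega$ depends linearly (or at least in a controlled sublinear way) on $\Omega$, I would use that $\Omega\in L^q$ can be written as an interpolation datum between $L^{q_0}$ and $L^\infty$ with the same parameters: $\tfrac1q=\tfrac{\theta_1}{q_0}+\tfrac{\theta_0}{\infty}$.

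The main steps, in order, are as follows. (1) By homogeneity in $\Omega$, normalize $\|\Omega\|_{L^q}=1$. (2) Perform the Calder\'on--Zygmund-type decomposition of $\Omega$ at a level to be chosen: for $t>0$ write $\Omega=\Omega\chi_{\{|\Omega|\le t\}}+\Omega\chi_{\{|\Omega|>t\}}=:\Omega_t^{\mathrm{good}}+\Omega_t^{\mathrm{bad}}$, so that $\|\Omega_t^{\mathrm{good}}\|_{L^\infty}\le t$ and $\|\Omega_t^{\mathrm{bad}}\|_{L^{q_0}}^{q_0}=\int_{\{|\Omega|>t\}}|\Omega|^{q_0}\le t^{q_0-q}\int|\Omega|^q=t^{q_0-q}$. (3) Apply the two hypotheses respectively: bound $\|T^{\Omega_t^{\mathrm{good}}}f\|_{L^p(w^{1+\varepsilon})}\le k_1 t\,\|f\|_{L^p(w^{1+\varepsilon})}$ and $\|T^{\Omega_t^{\mathrm{bad}}}f\|_{L^p}\le k_0 t^{(q_0-q)/q_0}\|f\|_{L^p}$. (4) Now run the Stein--Weiss / real-interpolation argument on the distribution function of $T^\Omega f$ with respect to $w\,dx$: for $\lambda>0$, split $T^\Omega f=T^{\Omega_t^{\mathrm{good}}}f+T^{\Omega_t^{\mathrm{bad}}}f$ with $t=t(\lambda)$ chosen so that each piece contributes to $w(\{|T^\Omega f|>\lambda\})$ in a summable way, using Chebyshev in the measure $w^{1+\varepsilon}\,dx$ for the good part (after noting $w^{1+\varepsilon}\,dx$ dominates $w\,dx$ on the relevant sub-level sets up to the change-of-measure bookkeeping) and Chebyshev in $dx$ for the bad part. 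Optimizing in $t$ produces the exponent pair $(\theta_0,\theta_1)=(\tfrac1{1+\varepsilon},\tfrac{\varepsilon}{1+\varepsilon})$ on $(k_0,k_1)$ and yields $\|T^\Omega f\|_{L^p(w)}\lesssim k_0^{1/(1+\varepsilon)}k_1^{\varepsilon/(1+\varepsilon)}\|f\|_{L^p(w)}$. (5) Undo the normalization to restore the factor $\|\Omega\|_{L^q}$.

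Alternatively, and perhaps more cleanly, I would phrase step (4) directly through Lemma \ref{sw} (Stein--Weiss with change of measure) applied not to $f$ but to the operator-valued family $\Omega\mapsto T^\Omega f$ for fixed $f$: regard $\Omega\mapsto T^\Omega f$ as a sublinear operator from $L^{q_0}(\sn)+L^\infty(\sn)$ into $L^p(dx)+L^p(w^{1+\varepsilon}\,dx)$, with the first endpoint $L^{q_0}\to L^p(dx)$ of norm $k_0\|f\|_{L^p}$ and the second $L^\infty\to L^p(w^{1+\varepsilon}\,dx)$ of norm $k_1\|f\|_{L^p(w^{1+\varepsilon})}$; Stein--Weiss then interpolates both the source exponent ($q_0,\infty\rightsquigarrow q$) and the target measure ($dx, w^{1+\varepsilon}dx\rightsquigarrow w\,dx$, legitimate because $w=w^{(1+\varepsilon)\theta_1}$) with the common parameter, giving $L^q\to L^p(w\,dx)$ of norm $\lesssim k_0^{\theta_0}k_1^{\theta_1}(\|f\|_{L^p})^{\theta_0}(\|f\|_{L^p(w^{1+\varepsilon})})^{\theta_1}$. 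The last product of $f$-norms is then itself controlled by $\|f\|_{L^p(w)}$ via another application of Stein--Weiss (or simply H\"older) for $f$, since $\|f\|_{L^p(dx)}^{\theta_0}\|f\|_{L^p(w^{1+\varepsilon}dx)}^{\theta_1}$ with weights $(1,w^{1+\varepsilon})$ and parameters $(\theta_0,\theta_1)$ interpolates to $\|f\|_{L^p(w^{(1+\varepsilon)\theta_1}dx)}=\|f\|_{L^p(w)}$.

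The step I expect to be the main obstacle is the legitimacy of the change-of-measure interpolation in the form needed here: Stein--Weiss is classically stated for linear (or sublinear) operators acting between $L^{p_i}(d\mu_i)$ spaces with a \emph{single} operator and varying measures, whereas here we are simultaneously varying the source space $L^{q_i}(\sn)$ and the target measure, and the dependence on $\Omega$ is only sublinear (the decomposition $T^{\Omega_1+\Omega_2}f$ need only be dominated by $T^{\Omega_1}f+T^{\Omega_2}f$ pointwise or in norm). So the careful point is to verify that the real-interpolation machinery still applies when the "operator" is $\Omega\mapsto T^\Omega f$; this is precisely why the direct distribution-function argument in steps (2)--(4) is safer, since there sublinearity enters only through the pointwise inequality $|T^\Omega f|\le |T^{\Omega_t^{\mathrm{good}}}f|+|T^{\Omega_t^{\mathrm{bad}}}f|$ and one never needs an abstract interpolation theorem — one just optimizes the splitting parameter $t$ by hand. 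I would therefore write the proof via the explicit Calder\'on--Zygmund splitting of $\Omega$ and direct optimization, invoking Lemma \ref{sw} only as a black box if it streamlines the final bookkeeping, and otherwise give the two-page elementary computation.
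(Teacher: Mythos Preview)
Your proposal has a genuine gap in both variants, and the paper's argument proceeds differently.

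In your ``alternative'' approach you view $\Omega\mapsto T^\Omega f$ as a sublinear map and interpolate to obtain a bound of the form
\[
\|T^\Omega f\|_{L^p(w)}\lesssim k_0^{\theta_0}k_1^{\theta_1}\|\Omega\|_{L^q}\,\|f\|_{L^p}^{\theta_0}\|f\|_{L^p(w^{1+\varepsilon})}^{\theta_1},
\]
and then claim that the product of $f$-norms is controlled by $\|f\|_{L^p(w)}$. This is false: by H\"older (log-convexity of $s\mapsto\int|f|^p w^s$) one has
\[
\|f\|_{L^p(w)}\le \|f\|_{L^p}^{\theta_0}\|f\|_{L^p(w^{1+\varepsilon})}^{\theta_1},
\]
not the reverse inequality. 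So this step cannot close. (There is also a slip in your parameters: with $\theta_1=\varepsilon/(1+\varepsilon)$ you get $(1)^{\theta_0}(w^{1+\varepsilon})^{\theta_1}=w^\varepsilon$, not $w$.)

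The same obstruction appears in your ``direct'' distribution-function plan. You want to bound $w(\{|T^\Omega f|>\lambda\})$ by splitting $\Omega$ and using Chebyshev with respect to $dx$ on one piece and $w^{1+\varepsilon}dx$ on the other. But neither of these measures controls $w\,dx$ on arbitrary sets, and there is no mechanism in your outline to pass from $|E|$ or $\int_E w^{1+\varepsilon}$ back to $\int_E w$; the ``change-of-measure bookkeeping'' you allude to is precisely the nontrivial content of Stein--Weiss, and that theorem is not a real-interpolation statement---its standard proof is by complex methods. Real interpolation of $(L^p(w_0),L^p(w_1))$ with equal $p$ does not in general yield $L^p(w_0^{1-\theta}w_1^\theta)$.

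The paper avoids this by running a \emph{single} three-lines argument that varies the weight, the test functions, and $\Omega$ simultaneously. One sets, for $\xi$ in the closed strip $0\le\Re\xi\le1$,
\[
f_\xi=f\,w^{(\beta(\xi)-1)/p},\qquad g_\xi=g\,w^{\alpha(\xi)-1},\qquad \Omega_\xi=\Omega^{\gamma(\xi)}\|\Omega\|_{L^q}^{1-\gamma(\xi)},
\]
with affine $\alpha,\beta,\gamma$ chosen so that at $\xi=0$ the pairing $\int T^{\Omega_\xi}f_\xi\cdot g_\xi\,w$ becomes the unweighted $L^p$--$L^{p'}$ duality with $\Omega_0\in L^{q_0}$, at $\xi=1$ it becomes the $L^p(w^{1+\varepsilon})$ duality with $\Omega_1\in L^\infty$, and at $\xi=1/(1+\varepsilon)$ it is exactly $\int T^\Omega f\cdot g\,w$. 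The three-lines theorem then gives the product $k_0^{\varepsilon/(1+\varepsilon)}k_1^{1/(1+\varepsilon)}$ directly, with the correct $\|f\|_{L^p(w)}\|g\|_{L^{p'}(w)}$ on the right because the analytic families are built so that $\|f_0\|_{L^p}=\|f_1\|_{L^p(w^{1+\varepsilon})}=\|f\|_{L^p(w)}$ (and similarly for $g$). The point is that by embedding $f$, $g$, and $\Omega$ into analytic families \emph{before} estimating, one never has to recombine endpoint $f$-norms after the fact, which is exactly where your argument breaks.
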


\emph{Proof.} Recall that $\frac{1}{q}=\frac{\varepsilon}{1+\varepsilon}\frac{1}{q_0}.$
Let $\beta(\xi)=2-(1+\varepsilon)\xi$, then $\beta(0)=2$, $\beta(1)=1-\varepsilon$, $\beta(\frac{1}{1+\varepsilon})=1$.
Let $ f_\xi(x)= f(x)w(x)^{\frac{\beta(\xi)-1}{p}}.$ It is easy to verify
\begin{align*}
\| f_0\|_{L^p}^p&=\int| f(x)|^pw(x)^{\frac{1}{p}\cdot p}dx=\|f\|_{L^p(w)}^p,
\end{align*}
and
\begin{align*}
\| f_1\|_{L^p(w^{1+\varepsilon})}^p&=\int| f(x)|^pw(x)^{\frac{-\varepsilon}{p}\cdot p}w(x)^{1+\varepsilon}dx=\int| f(x)|^pw(x)dx=\| f\|_{L^p(w)}^p.
\end{align*}

Let $\alpha(\xi)=\frac{1}{p'}+\frac{1+\varepsilon}{p}\xi$, then $\alpha(0)=\frac{1}{p'}, \alpha(1)=1+\frac{\varepsilon}{p}, \alpha(\frac{1}{1+\varepsilon})=1$.
Let $g_\xi(x)=g(x)w(x)^{\alpha(\xi)-1}$, then
\begin{align*}
\|g_0\|_{L^{p'}(w^{p'})}^{p'}&=\int|g(x)|^{p'}w(x)^{\frac{1}{p'}}w(x)^{p'}dx=\int|g(x)|^{p'}w(x)dx=\|g\|_{L^{p'}(w)}^{p'},
\end{align*}
and
\begin{align*}
\|g_1\|_{L^{p'}(w^{1-\frac{\varepsilon}{p-1}})}^{p'}&=\int|g(x)|^{p'}w(x)^{\frac{\varepsilon}{p}\cdot p'}w(x)^{1-\frac{\varepsilon}{p-1}}dx=\int|g(x)|^{p'}w(x)dx=\|g\|_{L^{p'}(w)}^{p'}.
\end{align*}

Let $\gamma(\xi)=\frac{q}{q_0}-\frac{q}{q_0}\xi$, then $\gamma(0)=\frac{q}{q_0}, \gamma(1)=0, \gamma(\frac{1}{1+\varepsilon})=1$.
Let $\Omega_\xi(x')=\Omega(x')^{\gamma(\xi)}\|\Omega\|_{L^q}^{1-\gamma(\xi)},$ then\begin{align*}
\|\Omega_0\|_{L^{q_0}({\Bbb S}^{n-1})}&=\|\Omega\|_{L^q({\Bbb S}^{n-1})},
\end{align*}
and
\begin{align*}
\|\Omega_1\|_{L^{\infty}({\Bbb S}^{n-1})}&=\|\Omega\|_{L^q({\Bbb S}^{n-1})}.
\end{align*}
Write
\begin{align*}
G(\xi)=\bigg|\int T^{\Omega_\xi} f_{\xi}(x)g_{\xi}(x)w(x)dx\bigg|.
\end{align*}
Since
\begin{align*}
G(0)&=\Big|\int T^{\Omega_0} f_{0}(x)g_{0}(x)w(x)dx\Big|\\
&\leq\|T_{\Omega_0}f_{0}\|_{L^p}\|g_0\|_{L^{p'}(w^{p'})}\\
&\leq k_0\|\Omega_0\|_{L^{q_0}}\|f_0\|_{L^p}\|g_0\|_{L^{p'}(w^{p'})}\\
&= k_0\|\Omega\|_{L^q({\Bbb S}^{n-1})}\|f\|_{L^p(w)}\|g\|_{L^{p'}(w)},
\end{align*}
and
\begin{align*}
G(1)&=\bigg|\int T^{\Omega_1}f_{1}(x)g_{1}(x)w(x)dx\bigg|\\
&\leq\| T_{\Omega_1}f_{1}\|_{L^p(w^{1+\varepsilon})}\|g_1\|_{L^{p'}(w^{1-\frac{\varepsilon}{p-1}})}\\
&\leq k_1\|\Omega_1\|_{L^\infty}\|f_1\|_{L^p(w^{1+\varepsilon})}\|g_1\|_{L^{p'}(w^{1-\frac{\varepsilon}{p-1}})}\\
&=k_1\|\Omega\|_{L^q({\Bbb S}^{n-1})}\|f\|_{L^p(w)}\|g\|_{L^{p'}(w)}.
\end{align*}
Note that
\begin{align*}
G\Big(\frac{1}{1+\varepsilon}\Big)=\bigg|\int T^{\Omega}f(x)g(x)w(x)dx\bigg|.
\end{align*}
By three-lines theorem, we get
\begin{align*}
G\Big(\frac{1}{1+\varepsilon}\Big)\leq k_0^{\frac{\varepsilon}{1+\varepsilon}}k_1^{\frac{1}{1+\varepsilon}}\|\Omega\|_{L^q({\Bbb S}^{n-1})}\|f\|_{L^p(w)}\|g\|_{L^{p'}(w)}.
\end{align*}
Then
\begin{align*}
\|T^\Omega f\|_{L^p(w)}\leq k_0^{\frac{\varepsilon}{1+\varepsilon}}k_1^{\frac{1}{1+\varepsilon}}\|\Omega\|_{L^q({\Bbb S}^{n-1})}\|f\|_{L^p(w)}.
\end{align*}

The proof of Lemma \ref{sw1} is complete. \qed

\vskip0.2cm

\section{Proof of Theorem 1.1}

\subsection{Main frame of the proof of Theorem 1.1}

Recall the definition of the operator $\mathcal{C}_{\Omega}$ given in the introduction. \begin{equation}\label{Cal}
\mathcal{C}_\Omega f
(x)=p.v.\int_{{\Bbb R}^n}\left(\cfrac{\Omega(x-y)}{|x-y|^{n+1}}\right)\big(b(x)-b(y)\big)f(y)dy\ \ \forall\ \ x\in\mathbb R^n.
 \end{equation}It can be
written as
\begin{align*}
\mathcal{C}_\Omega f=[b, T_{\Omega}^1]f=b(x)T_{\Omega}^1f(x)-T_{\Omega}^1(bf)(x),
\end{align*} where $$T_{\Omega}^1f(x)=p.v.\int_{{\Bbb R}^n}\frac{\Omega(x-y)}{|x-y|^{n+1}}f(y)\,dy.$$
Write\begin{align}\label{eq-01}
T_{\Omega}^1=\sum_{k \in \mathbb{Z}} T_{k} f=\sum_{k \in \mathbb{Z}} K_{k}\ast f, \quad K_{k}=\frac{\Omega(x^{\prime})}{|x|^{n+1}} \chi_{\{2^{k}<|x|\le2^{k+1}\}}.
\end{align}

We consider the following partition of unity.    Let $\varphi \in  {\mathscr S}({\Bbb R}^n)$   and  $\widehat{\varphi}\in {\mathscr S}({\Bbb R}^n)$ be a
radial function satisfying  $\widehat{\varphi}(\xi)=1$ for $|\xi|\le \frac12$ and $\widehat{\varphi}(\xi)=0$ for $|\xi|\ge 1$. Let us also define
$\psi$ by $\widehat{\psi}(\xi)^3=\widehat{\varphi}(\xi)-\widehat{\varphi}(2 \xi)\in {\mathscr
S}({\Bbb R}^n)$. Then, with this choice of $\widehat{\psi},$ it is supported by $\{\frac12\le|\xi|\le 2\}.$
We write $\varphi_{j}(x)=\frac{1}{2^{j n}} \varphi(\frac{x}{2^{j}}),$ and $\psi_{j}(x)=\frac{1}{2^{j n}} \psi(\frac{x}{2^{j}})$.
We now define the partial sum operators $S_{j}$ by $S_{j}(f)=f\ast\varphi_{j}$. Their differences
are given by
\begin{align}\label{eq-02}
S_{j}(f)-S_{j+1}(f)=f\ast\psi_{j}\ast\psi_{j}\ast\psi_{j}.
\end{align}
Since $S_{j}f \rightarrow f$ as $j \rightarrow -\infty$, for any sequence of integer numbers $\{N(j)\}_{j=0}^{\infty}$, with
$0 = N(0) < N(1) <\cdot\cdot\cdot < N(j)\rightarrow\infty $, we have the identity
\begin{equation}\label{eq-03}
T_{k}=T_{k} S_{k}+\sum_{j=1}^{\infty} T_{k}(S_{k-N(j)}-S_{k-N(j-1)}).
\end{equation}
We point out that such decomposition with respect to the  $\{N(j)\}_j$ is due to \cite{HRT}.
Next, by writing
\begin{align*}
 S_{k}%=\sum_{j= 1}^\infty(S_{k+j-1}-S_{k+j})
 =\sum_{j= 1}^\infty(S_{k+N(j-1)}-S_{k+N(j)}),
\end{align*}
we obtain that
\begin{align*}
 T_k%&=T_k\bigg(\sum_{j=1}^{\infty}(S_{k+N(j-1)}-S_{k+N(j)})+\sum_{j=1}^{\infty} (S_{k-N(j)}-S_{k-N(j-1)})\bigg)\\&
 =\sum_{j=1}^{\infty}T_k(S_{k+N(j-1)}-S_{k+N(j)})+\sum_{j=1}^{\infty}T_k(S_{k-N(j)}-S_{k-N(j-1)}).
\end{align*}
This gives
\begin{equation}\label{eq-03}
T_{\Omega}^1%=\sum_{j=1}^{\infty} {T}_{1,j}+\sum_{j=1}^{\infty}{T}_{2,j}
=\sum_{j=1}^{\infty} T_{1,j}^{N} + \sum_{j=1}^{\infty} T_{2,j}^{N},
\end{equation} where
 for $j\geq1$,
\begin{align}\label{t1} %T_{1,j} &:=\sum_{k \in \mathbb{Z}} T_{k}(S_{k-j}-S_{k-(j-1)}),\nonumber\\
T_{1,j}^{N}
&:=\sum_{k \in \mathbb{Z}} T_k(S_{k-N(j)}-S_{k-N(j-1)})%=\sum_{i=N(j-1)+1}^{N(j)} T_{1,i},
\end{align}
and
\begin{align}\label{t2}
%T_{2,j} &:=\sum_{k \in \mathbb{Z}} T_{k}(S_{k+j-1}-S_{k+j}),\nonumber\\
T_{2,j}^{N}
&:=\sum_{k \in \mathbb{Z}} T_{k}(S_{k+N(j-1)}-S_{k+N(j)}).%=\sum_{i=N(j-1)+1}^{N(j)}T_{2,i}.
\end{align}
Thus
\begin{align*}
\mathcal{C}_\Omega f=[b, T_\Omega^1] f =\sum_{j=1}^{\infty} [b, T_{1,j}^{N}+T_{2,j}^{N}] f =\sum_{j=1}^{\infty} [b, T_{1,j}^{N}] f+\sum_{j=1}^{\infty} [b, T_{2,j}^{N}] f .
\end{align*}
Therefore,  for $1<p<\infty,$ and $w\in A_p,$ \begin{align*}\|\mathcal{C}_\Omega f\|_{L^p(w)}\leq\sum_{j=1}^{\infty} \| [b, T_{1,\,j}^{N}]f\|_{L^p(w)}+\sum_{j=1}^{\infty} \| [b, T_{2,\,j}^{N}]f\|_{L^p(w)}. \end{align*}  To prove Theorem \ref{thm1}, we need to establish the quantitative weighted bounds for $[b, T_{1,j}^{N}]$ and $[b, T_{2,j}^{N}]$, respectively.

\textbf{Part 1. }First, for $[b, T_{1,j}^{N}]$, we do not assume the cancellation condition \eqref{mean zero} on $\Omega,$  we claim the following inequalities holds $1<p<\infty$ and $w\in A_p$:
\begin{align}\label{vv}
 \|[b, T_{1,\,j}^{N}]f\|_{L^p}&\lesssim \|\nabla b\|_{L^\infty}\|\Omega\|_{L^{q_0}}2^{-\theta N(j-1)}\|f\|_{L^p} \end{align} for any fixed $q_0>1,$
 and
\begin{align}\label{vq3}
 \|[b, T_{1,\,j}^{N}]f\|_{L^p(w)}&\lesssim \|\nabla b\|_{L^\infty}\|\Omega\|_{L^{\infty}}(1+N(j))\{w\}_{A_p}\|f\|_{L^p(w)}.
 \end{align}

We assume \eqref{vv} and \eqref{vq3} for the moment, which will be given in Section 3.2. Then, by choosing $\varepsilon:=\frac{1}{2}c_n/(w)_{A_p}$ (see \cite{HRT}), we see that   the estimate \eqref{vq3} gives
\begin{align}\label{vw}
\|[b, T_{1,\,j}^{N}]\|_{L^p(w^{1+\varepsilon})}&\lesssim\|\nabla b\|_{L^\infty}\|\Omega\|_{L^\infty}(1+N(j)){\{w^{1+\varepsilon}\}_{A_p}}\|f\|_{L^p(w^{1+\varepsilon})}\\
&\lesssim\|\nabla b\|_{L^\infty}\|\Omega\|_{L^\infty}(1+N(j)){\{w\}}_{A_p}^{1+\varepsilon}\|f\|_{L^p(w^{1+\varepsilon})}\nonumber.
\end{align}
Now we are in position to apply the interpolation theorem with change of measures and kernels.
We now apply Lemma  \ref{sw1} to $T^\Omega=[b, T_{1,\,j}^{N}]$. Then we obtain that  there exist some $\theta,\,\gamma>0 $ such that for $q>\frac{(1+\varepsilon)}{\varepsilon},$
\begin{align*}
\|[b, T_{1,\,j}^{N}]\|_{L^p(w)\rightarrow L^p(w)}
&\lesssim\|\nabla b\|_{L^\infty}\|\Omega\|_{L^q}(1+N(j))^{\frac{1}{1+\varepsilon}}2^{-\theta N(j-1)\varepsilon/(1+\varepsilon)}{\{w\}}_{A_p}\\
&\lesssim\|\nabla b\|_{L^\infty}\|\Omega\|_{L^q}(1+N(j))^{\frac{1}{1+\varepsilon}}2^{-\gamma N(j-1)/{(w)}_{A_p}}{\{w\}}_{A_p}.
\end{align*}
This gives
\begin{align*}
\sum^\infty_{j=1}\|[b, T_{1,\,j}^{N}]\|_{L^p(w)\rightarrow L^p(w)}
&\lesssim \|\nabla b\|_{L^\infty}\|\Omega\|_{L^q}{\{w\}}_{A_p}\sum^\infty_{j=1}(1+N(j))^{\frac{1}{1+\varepsilon}}2^{-\gamma N(j-1)/{(w)}_{A_p}}.
\end{align*}
Thus, it suffices to choose a suitable increasing sequence $\{N(j)\}$ to get the quantitative estimate. We choose $N(j)=2^j$ for $j\geq1$. Then, using $e^{-x}\leq2x^{-2}$, we have
\begin{align*}
\sum^\infty_{j=1}&(1+N(j))2^{-\gamma N(j-1)/{(w)}_{A_p}}
\lesssim\sum_{j:2^{j}\leq(w)_{A_p}}2^{j}+\sum_{j:2^j\geq(w)_{A_p}}2^j\big(\frac{(w)_{A_p}}{2^j}\big)^2
\lesssim(w)_{A_p},
\end{align*}
by summing two geometric series in the last step.
As a consequence, we have for $q>\frac{(1+\varepsilon)}{\varepsilon},$
$$
\sum^\infty_{j=1}\|[b, T_{1,\,j}^{N}]\|_{L^p(w)\rightarrow L^p(w)}\lesssim\|\nabla b\|_{L^\infty}\|\Omega\|_{L^q}{\{w\}}_{A_p}(w)_{A_p}\|f\|_{L^p(w)}.
$$

{\textbf{Part 2}. }Next, for $[b, T_{2,j}^{N}]$, if $\Omega$  satisfies the cancellation condition \eqref{mean zero},   we claim the following inequalities holds : for $1<p<\infty$ and $w\in A_p,$
\begin{align}\label{vv1}
 \|[b, T_{2,\,j}^{N}]f\|_{L^p}&\lesssim \|\nabla b\|_{L^\infty}\|\Omega\|_{L^{1}}2^{-\theta N(j-1)}\|f\|_{L^p}, \end{align}
 and 
\begin{align}\label{vq31}
 \|[b, T_{2,\,j}^{N}]f\|_{L^p(w)}&\lesssim \|\nabla b\|_{L^\infty}\|\Omega\|_{L^{1}}(1+N(j))\{w\}_{A_p}\|f\|_{L^p(w)}.
 \end{align}

We assume \eqref{vv1} and \eqref{vq31} for the moment, which will be given in Section 3.3. Then, by choosing $\varepsilon:=\frac{1}{2}c_n/(w)_{A_p}$ (see \cite{HRT}), we see that   the estimate \eqref{vq3} gives
\begin{align}\label{vw1}
\|[b, T_{2,\,j}^{N}]\|_{L^p(w^{1+\varepsilon})}&\lesssim\|\nabla b\|_{L^\infty}\|\Omega\|_{L^1}(1+N(j)){\{w^{1+\varepsilon}\}_{A_p}}\|f\|_{L^p(w^{1+\varepsilon})}\\
&\lesssim\|\nabla b\|_{L^\infty}\|\Omega\|_{L^1}(1+N(j)){\{w\}}_{A_p}^{1+\varepsilon}\|f\|_{L^p(w^{1+\varepsilon})}\nonumber.
\end{align}

\begin{lemma}[Stein--Weiss] \label{sw} Assume that $1\leq p_0,p_1\leq\infty$, that $w_0$ and $w_1$ are positive weights, and that $T$ is a sublinear operator  satisfying
$$
T:L^{p_i}(w_i)\rightarrow L^{p_i}(w_i),~~~~i=0,1,
$$
with quasi-norms $M_0$ and $M_1$, respectively. Then
$$
T:L^{p}(w)\rightarrow L^{p}(w),
$$
with quasi-norm $M\leq M_0^\lambda M_1^{1-\lambda}$, where
$$
\frac{1}{p}=\frac{\lambda}{p_0}+\frac{1-\lambda}{p_1},\quad w=w_0^{p\lambda/p_0}w_1^{p(1-\lambda)/p_1}.
$$\end{lemma}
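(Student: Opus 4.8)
The plan is to prove Lemma~\ref{sw} by Stein's method of analytic families combined with the three-lines theorem, exactly in the spirit of the proof of Lemma~\ref{sw1} above; this is the classical Stein--Weiss interpolation with change of measure \cite{sw}. I would first reduce to the main case $1\le p_0,p_1<\infty$ (an endpoint $p_i=\infty$ is handled by the usual modification of the three-lines step, and $p_0=p_1$ is the subcase in which only the weights move), and I may assume that $T$ is linear; this already covers the application, since each $[b,T_{i,j}^{N}]$ is linear, and the sublinear case follows by the standard linearization. By density of simple functions in $L^p(w)$ and duality of $L^p(w)$ under the Lebesgue pairing $\langle u,v\rangle=\int_{\mathbb{R}^n}uv\,dx$, for which $(L^p(w))^{\ast}=L^{p'}(w^{1-p'})$, it is enough to prove
$$
\Big|\int_{\mathbb{R}^n}(Tf)(x)\,h(x)\,dx\Big|\le M_0^{\lambda}M_1^{1-\lambda}
$$
whenever $f$ and $h$ are simple with $\|f\|_{L^p(w)}=\|h\|_{L^{p'}(w^{1-p'})}=1$.

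Next I would introduce the analytic families on the strip $S=\{0\le \operatorname{Re}z\le 1\}$. Put $\frac{1}{p(z)}=\frac{1-z}{p_0}+\frac{z}{p_1}$ and $\frac{1}{p'(z)}=\frac{1-z}{p_0'}+\frac{z}{p_1'}$, so that $p(z_\ast)=p$, $p'(z_\ast)=p'$ at $z_\ast=1-\lambda$, and note that $p/p(z)$ and $p'/p'(z)$ are affine in $z$. Writing $f=|f|\operatorname{sgn}f$, $h=|h|\operatorname{sgn}h$, set $f_z=|f|^{\,p/p(z)}\operatorname{sgn}f\cdot U(z)$ and $h_z=|h|^{\,p'/p'(z)}\operatorname{sgn}h\cdot V(z)$, where the positive weight correctors are given by
$$
\log U(z)=\frac{1-z}{p_0}\log\frac{w}{w_0}+\frac{z}{p_1}\log\frac{w}{w_1},
$$
$$
\log V(z)=(1-z)\Big(\frac{1-p'}{p_0'}\log w+\frac{1}{p_0}\log w_0\Big)+z\Big(\frac{1-p'}{p_1'}\log w+\frac{1}{p_1}\log w_1\Big).
$$
A direct computation using precisely the hypothesis $w=w_0^{p\lambda/p_0}w_1^{p(1-\lambda)/p_1}$ (equivalently $\frac{1}{p}\log w=\frac{\lambda}{p_0}\log w_0+\frac{1-\lambda}{p_1}\log w_1$) shows $U(z_\ast)\equiv V(z_\ast)\equiv 1$, hence $f_{z_\ast}=f$ and $h_{z_\ast}=h$; and on the two boundary lines one checks, by elementary identities analogous to those in the proof of Lemma~\ref{sw1}, that
$$
\|f_{it}\|_{L^{p_0}(w_0)}=\|h_{it}\|_{L^{p_0'}(w_0^{1-p_0'})}=1,\qquad \|f_{1+it}\|_{L^{p_1}(w_1)}=\|h_{1+it}\|_{L^{p_1'}(w_1^{1-p_1'})}=1.
$$

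Then I would set $F(z)=\int_{\mathbb{R}^n}(Tf_z)(x)\,h_z(x)\,dx$. Because $f$ and $h$ are simple, $z\mapsto F(z)$ is analytic on the interior of $S$ and bounded and continuous on $S$; I would dispatch this routine point by first taking $f,h$ simple and supported on a set where the weights are bounded, arguing there, and then removing the restriction by a limiting argument, just as is tacitly done for Lemma~\ref{sw1}. By Hölder's inequality with weight $w_0$ and the endpoint bound $\|T\|_{L^{p_0}(w_0)\to L^{p_0}(w_0)}\le M_0$,
$$
|F(it)|\le \|Tf_{it}\|_{L^{p_0}(w_0)}\,\|h_{it}\|_{L^{p_0'}(w_0^{1-p_0'})}\le M_0\,\|f_{it}\|_{L^{p_0}(w_0)}\,\|h_{it}\|_{L^{p_0'}(w_0^{1-p_0'})}=M_0,
$$
and likewise $|F(1+it)|\le M_1$. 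The three-lines theorem then gives $|F(z_\ast)|\le M_0^{1-\operatorname{Re}z_\ast}M_1^{\operatorname{Re}z_\ast}=M_0^{\lambda}M_1^{1-\lambda}$, while $F(z_\ast)=\int(Tf)h\,dx$; taking the supremum over admissible $h$ yields $\|Tf\|_{L^p(w)}\le M_0^{\lambda}M_1^{1-\lambda}\|f\|_{L^p(w)}$, which is the assertion.

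I expect the only genuinely delicate part to be the bookkeeping in the second step: one must choose $U(z)$ and $V(z)$ so that (i) their exponents are affine in $z$, guaranteeing analyticity of $F$; (ii) they collapse to $1$ exactly at $z_\ast$, which both forces and exploits the precise relation linking $w$ to $w_0,w_1$; and (iii) they transform the boundary $L^{p_i}(w_i)$- and dual norms into $1$. Once the correct choice is recorded, the remainder is the standard three-lines machinery. An essentially equivalent route would be to invoke Calder\'on's identification $[L^{p_0}(w_0),L^{p_1}(w_1)]_{1-\lambda}=L^p(w)$ of complex interpolation spaces together with the interpolation property of the complex method, which merely packages the same computation inside the interpolation functor.
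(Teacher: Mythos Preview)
The paper does not prove Lemma~\ref{sw}; it simply quotes it as the classical Stein--Weiss interpolation theorem with change of measure and attributes it to \cite{sw}. Your proposal supplies the standard proof via analytic families and the three-lines lemma, which is precisely the argument in \cite{sw} and is correct in outline; the only work, as you note, is the bookkeeping in choosing $U(z)$ and $V(z)$, and your verification that $U(z_\ast)=1$ forces exactly the relation $w=w_0^{p\lambda/p_0}w_1^{p(1-\lambda)/p_1}$ is on target. Since the paper itself offers no proof to compare against, there is nothing further to add beyond observing that your argument matches the original source the paper cites.
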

With \eqref{vv1} and \eqref{vw1}, we now apply Lemma \ref{sw} to $T=[b, T_{2,\,j}^{N}]$ with $p_0=p_1=p,~~w_0=w^0=1$, $w_1=w^{1+\varepsilon}$ and $\lambda=\varepsilon/(1+\varepsilon)$. Then we obtain that  there exist some $\theta,\,\gamma>0 $ such that
\begin{align*}
\|[b, T_{2,\,j}^{N}]\|_{L^p(w)\rightarrow L^p(w)}&\lesssim
\|[b,T_{2,\,j}^{N}]\|_{L^p\rightarrow L^p}^{\varepsilon/(1+\varepsilon)}\|[b, T_{2,\,j}^{N}]\|_{L^p(w^{1+\varepsilon})\rightarrow L^p(w^{1+\varepsilon})}^{1/(1+\varepsilon)}\\
&\lesssim\|\nabla b\|_{L^\infty}\|\Omega\|_{L^1}(1+N(j))2^{-\theta N(j-1)\varepsilon/(1+\varepsilon)}{\{w\}}_{A_p}\\
&\lesssim\|\nabla b\|_{L^\infty}\|\Omega\|_{L^1}(1+N(j))2^{-\gamma N(j-1)/{(w)}_{A_p}}{\{w\}}_{A_p}.
\end{align*}
This gives
\begin{align*}
\sum^\infty_{j=1}\|[b, T_{2,\,j}^{N}]\|_{L^p(w)\rightarrow L^p(w)}
&\lesssim \|\nabla b\|_{L^\infty}\|\Omega\|_{L^1}{\{w\}}_{A_p}\sum^\infty_{j=1}(1+N(j))2^{-\gamma N(j-1)/{(w)}_{A_p}}.
\end{align*}
Thus, it suffices to choose a suitable increasing sequence $\{N(j)\}$ to get the quantitative estimate. We choose $N(j)=2^j$ for $j\geq1$. Similarly, we have
\begin{align*}
\sum^\infty_{j=1}&(1+N(j))2^{-\gamma N(j-1)/{(w)}_{A_p}}
\lesssim(w)_{A_p}.
\end{align*}
As a consequence, we have
$$
\sum^\infty_{j=1}\|[b, T_{2,\,j}^{N}]f\|_{L^p(w)}\lesssim\|\nabla b\|_{L^\infty}\|\Omega\|_{L^1}{\{w\}}_{A_p}(w)_{A_p}\|f\|_{L^p(w)}.
$$

\bigskip

\subsection {  Proof of \eqref{vv} and \eqref{vq3}}

 To begin with,  we need the following lemma which gives \eqref{vv}.

\begin{lemma}\label{v2} Let  $b\in Lip({\Bbb R}^n)$. For any fixed $q>1,$ let $\Omega\in L^q({\Bbb S}^{n-1}).$ The following inequality holds for $1<p<\infty$ and some $\delta\in (0,1),$
\begin{align}\label{frourier 1}
\|[b, T_{1,\,j}^{N}] f\|_{L^{p}}
& \lesssim \|\Omega\|_{L^{q}}\|\nabla b\|_{L^\infty} 2^{-\delta N(j-1)}\|f\|_{L^{p}}.
\end{align}

\end{lemma}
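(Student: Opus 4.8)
The plan is to prove the unweighted estimate \eqref{frourier 1} for $[b,T_{1,j}^N]$ by reducing it, via a Fourier-analytic decomposition, to the single-block commutator estimate of Lemma \ref{CD1}. First I would record the Fourier multiplier of each building block. Writing $T_{1,j}^N=\sum_{k\in\ZZ}T_k(S_{k-N(j)}-S_{k-N(j-1)})$, the operator $T_k(S_{k-N(j)}-S_{k-N(j-1)})$ has multiplier $\widehat{K_k}(\xi)\big(\widehat\varphi(2^{k-N(j)}\xi)-\widehat\varphi(2^{k-N(j-1)}\xi)\big)$, supported essentially in the annulus $|\xi|\sim 2^{-k+N(j-1)}$ (up to the width $N(j)-N(j-1)$). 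I would then insert a Littlewood--Paley partition $1=\sum_{l\in\ZZ}\widehat{\psi}(2^l\xi)^3$ adapted to this scale; because of the frequency localization only $O(1+N(j))$ values of $l$ (namely $l$ with $2^{-l}\sim 2^{-k+N(j-1)}$, i.e. $l$ near $k-N(j-1)$, down to $k-N(j)$) contribute to each fixed $k$. This expresses $T_{1,j}^N=\sum_{k,l}T_{k,l}$ where each $T_{k,l}$ has a multiplier $m_{k,l}$ supported in a single dyadic annulus of radius $\sim 2^{-l}$.

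Next I would verify the two hypotheses of Lemma \ref{CD1} for (a rescaled version of) $m_{k,l}$. The size bound $\|m_{k,l}\|_\infty\lesssim \|\widehat{K_k}\|_\infty\lesssim 2^{-k}\|\Omega\|_{L^q}$ follows from $\widehat{K_k}(\xi)=\int_{2^k<|y|\le 2^{k+1}}\frac{\Omega(y')}{|y|^{n+1}}e^{-i\xi\cdot y}dy$ and $|\widehat{K_k}(\xi)|\lesssim 2^{-k}\|\Omega\|_{L^1}\le 2^{-k}\|\Omega\|_{L^q}$; moreover, on the annulus $|\xi|\sim 2^{-l}$ one gains a further factor from oscillation/cancellation, giving the decay $2^{-\beta(l-k)}$-type gain with $\beta>0$ that is forced by integrability of $\Omega\in L^q$ (this is where $q>1$ enters — one uses the standard estimate $|\widehat{K_k}(\xi)|\lesssim 2^{-k}(2^k|\xi|)^{-\sigma}$ for some $\sigma=\sigma(q)>0$ when $2^k|\xi|\ge 1$, via a rotation/Hölder argument on $\mathbb S^{n-1}$, after splitting $\Omega$ into its $L^q$-part). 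The derivative bound $\|\partial^\alpha m_{k,l}\|_\infty\lesssim 2^{l}$ (for $|\alpha|=2$, after rescaling so the annulus has radius $1$) follows from differentiating under the integral sign, each derivative in $\xi$ bringing down a factor $|y|\sim 2^k$, controlled against the support size. After normalizing the frequency scale $2^{-l}$ to $1$ by dilation (which is an isometry on $L^p$ up to the commutator structure — note $[b,\cdot]$ is compatible with dilations after rescaling $b$, whose $\|\nabla b\|_\infty$ scales correctly), Lemma \ref{CD1} yields $\|[b,T_{k,l}]f\|_{L^2}\lesssim 2^{-\beta\lambda(l-k)}2^{-\delta' N(j-1)}\|\nabla b\|_\infty\|f\|_{L^2}$ for the relevant range of $l$.

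Then I would sum. For fixed $j$, summing over the $O(1+N(j))$ admissible $l$'s and over $k\in\ZZ$ using an almost-orthogonality (Cotlar--Stein or a square-function) argument — the pieces $T_{k,l}$ for different $k$ have boundedly overlapping frequency supports at each scale — gives, in $L^2$, the bound $\lesssim \|\Omega\|_{L^q}\|\nabla b\|_\infty 2^{-\delta N(j-1)}(1+N(j))\|f\|_{L^2}$, and the extra polynomial factor $1+N(j)$ is absorbed by slightly shrinking $\delta$. For general $p$ one interpolates this $L^2$ bound against a crude $L^p$ bound for the full $T_{1,j}^N$ with growth in $j$ (e.g. from Lemma \ref{CD0} together with the trivial $\|T_k\|_{L^p\to L^p}\lesssim \|\Omega\|_{L^1}$), again trading a factor for a smaller exponential rate; alternatively one runs the same almost-orthogonality directly in $L^p$ using a vector-valued / Littlewood--Paley inequality and Lemma \ref{CD0}.

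The main obstacle I expect is the decay estimate for $\widehat{K_k}$ on annuli when $\Omega$ is merely in $L^q(\mathbb S^{n-1})$ with $q$ close to $1$: one must extract a genuine power gain $(2^k|\xi|)^{-\sigma}$ with $\sigma>0$ depending on $q$, which is the quantitative substitute for the smoothness one would have if $\Omega\in L^\infty$ or were smooth. Handling this typically requires the method of rotations combined with a one-dimensional van der Corput / cancellation estimate and Hölder's inequality, carefully tracking that the constant depends on $\|\Omega\|_{L^q}$ and not a larger norm. A secondary technical point is making the rescaling argument interact cleanly with the commutator $[b,\cdot]$ so that $\|\nabla b\|_\infty$ (and not a higher Lipschitz norm) appears — this is exactly what Lemma \ref{CD1} is designed to provide, so the burden is just to check its hypotheses scale-by-scale rather than to reprove it.
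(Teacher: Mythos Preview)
Your overall strategy---decompose $T_{1,j}^N$ into frequency-localized pieces, verify the multiplier bounds needed for Lemma \ref{CD1}, extract exponential decay in $i$ (the gap between the $T_k$-scale and the Littlewood--Paley scale), then interpolate against a crude $L^p$ bound---is exactly the paper's approach. You also correctly identify the key analytic input: the Fourier decay $|\widehat{K_k}(\xi)|\lesssim \|\Omega\|_{L^q}2^{-k}(2^k|\xi|)^{-\beta}$ for some $\beta=\beta(q)>0$, which is what makes $q>1$ enter.

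There is, however, a genuine gap in your summation step. You write that after applying Lemma \ref{CD1} to each $[b,T_{k,l}]$ you will sum over $k$ ``using an almost-orthogonality (Cotlar--Stein or a square-function) argument --- the pieces $T_{k,l}$ for different $k$ have boundedly overlapping frequency supports.'' The problem is that the \emph{commutators} $[b,T_{k,l}]$ do not inherit this frequency localization: multiplication by $b$ spreads frequencies, so neither Cotlar--Stein nor orthogonality in $L^2$ applies to the family $\{[b,T_{k,l}]\}_k$ directly. Lemma \ref{CD1} gives you a single-$k$ bound, but no mechanism for summing.

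The paper resolves this with a device you did not mention: the Littlewood--Paley projections are taken \emph{cubed}, $S_{k-N(j)}-S_{k-N(j-1)}=\sum_{i=N(j-1)+1}^{N(j)}\Delta_{k-i}^3$, and then the commutator is expanded by the Leibniz rule,
\[
[b,T_k\Delta_{k-i}^3]=[b,\Delta_{k-i}]\,T_k^i\Delta_{k-i}+\Delta_{k-i}[b,T_k^i]\Delta_{k-i}+\Delta_{k-i}T_k^i[b,\Delta_{k-i}],
\]
with $T_k^i=T_k\Delta_{k-i}$. Each of the three terms now has at least one genuine Littlewood--Paley piece $\Delta_{k-i}$ sitting \emph{outside} the commutator (on the left or on the right), which restores $\ell^2$-orthogonality in $k$ via the standard square-function estimate. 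Lemma \ref{CD1} then handles the middle term $[b,T_k^i]$, while Lemma \ref{CD0} (the weighted square function for $[b,\Delta_j]$) handles the two outer terms. For $L^p$ with $p\neq 2$ the paper repeats the Leibniz splitting with $\Delta_{k-i}^2$ outside and uses only the pointwise domination $|T_kf|\le 2^{-k}M_\Omega f$ to get a bound with no decay in $i$, then interpolates. Once you insert this $\Delta^3$/Leibniz step in place of the bare Cotlar--Stein claim, your outline coincides with the paper's proof.
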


 \emph{Proof.}
Recall that
\begin{align*}[b, T_{1,\,j}^{N}]f&=\sum_{k\in \Bbb Z}[b, T_k(S_{k-N(j)}-S_{k-N(j-1)})] f, \end{align*}
where $$S_{k-N(j)}-S_{k-N(j-1)}=\sum_{i=N(j-1)+1}^{N(j)}(S_{k-i}-S_{k-i+1})=\sum_{i=N(j-1)+1}^{N(j)}\Delta_{k-i}^3.$$
Then for $1<p<\infty$
\begin{align*}\|[b, T_{1,\,j}^{N}] f\|_{L^{p}}&\le \sum_{i=N(j-1)+1}^{N(j)}\Big\|\sum_{k\in \Bbb Z}[b, T_k\Delta_{k-i}^3] f\Big\|_{L^{p}}. \end{align*}
 First, we give the dedicated $L^2$-norm of $\sum_{k\in \Bbb Z}[b, T_k\Delta_{k-i}^3] f.$ For any $i\in \Bbb N,\, k\in \mathbb{Z}$, we define  $T_k^if(x):=T_{k}\Delta_{k-i}f(x).$
 Then we  write
\begin{align*}
[b,T_{k}\Delta_{k-i}^3]f
&=[b,\Delta_{k-i}](T_{k}^i\Delta_{k-i}f)+\Delta_{k-i}([b,T_{k}^i]\Delta_{k-i}f)
+\Delta_{k-i}T_{k}^i([b,\Delta_{k-i}]f).
\end{align*}
Then we get
\begin{align*}
\Big\|\sum_{k\in \Bbb Z}[b, T_k\Delta_{k-i}^3] f\Big\|_{L^{2}}
& =\Big\|\sum_{k\in \Bbb Z}[b, \Delta_{k-i}]T_k^j\Delta_{k-i} f\Big\|_{L^{2}} +\Big\|\sum_{k\in \Bbb Z} \Delta_{k-i}[b,T_k^i]\Delta_{k-i}f\Big\|_{L^{2}}+\Big\|\sum_{k\in \Bbb Z} \Delta_{k-i}T_k^i[b,\Delta_{k-i}]f\Big\|_{L^{2}}\\&=:I_1+I_2+I_3.
\end{align*}
Denote
\begin{align*}
\widehat{{T}_{k}^i f}(\xi)= \widehat{K}_{k}(\xi) \widehat{\psi}(2^{k-i} \xi) \widehat{f}(\xi)=: m_{i,k}(\xi) \widehat{f}(\xi).
\end{align*}

We now provide pointwise estimates for $m_{i,k}(\xi)$ and its derivative.
First, it is easy to verify that there exists $0<\beta<1$ such that for any fixed $q>1,$ $$|\widehat{K}_{k}(\xi)| \lesssim\|\Omega\|_{L^{q}} 2^{-k} |2^k\xi|^{-\beta}.$$ Second,   for any multi-index $\eta$ with $|\eta|\leq
2$, we get
$$
\begin{array}{cl}
\big|\partial^{\eta}\widehat{K}_{k}(\xi)\big|
 &\lesssim\left|\dint^{2^{k+1}}_{2^k}\dint_{S^{n-1}}\Omega(y'){y'}^{\eta}e^{-2\pi {\bf i} \xi\cdot
 ry'}d\sigma(y')r^{|\eta|-2}dr\right|\lesssim\|\Omega\|_{L^{1}}2^{k(|\eta|-1)}.
\end{array}
$$Since $\widehat{\psi} \in \mathcal{S}(\mathbb{R}^{d})$ %, $\widehat{\psi}(0)=0,$ and
with ${\rm supp}\widehat{\psi}\subset \{1/2\le |\xi|\leq 2\}$, we see that $|2^k\xi|\approx 2^{i}$.
 Hence,
\begin{align}\label{m1}
|m_{i,k}(\xi)| \leq |\widehat{K}_{k}(\xi)|\,| \widehat{\psi}(2^{k-i} \xi)|\lesssim2^{-k}2^{-\beta i}\|\Omega\|_{L^{q}}.
\end{align}
On the other hand, for any multi-index $\alpha$, we have
\begin{align*}
\partial ^{\alpha}m_{i,k}(\xi)&=
\partial
^{\alpha}\big(\widehat{K}_{k}(\xi)\psi(2^{k-i}\xi)\big)=\dsum\limits_{\eta}C^{\alpha_1}_{\eta_1}...C^{\alpha_n}_{\eta_n}
\big(\partial ^{\eta}\widehat{K}_{k}(\xi)\big) \big(\partial
^{\alpha-\eta}\psi(2^{k-i}\xi)\big),
\end{align*}
where the sum is taken over all multiindices $\eta$ with $0\leq
\eta_\ell \leq \alpha_\ell$ for $1\leq \ell \leq n$.  By taking $\alpha$ with
$|\alpha|=2$, we obtain that
\begin{align}\label{m2}
\big|\partial^{\alpha}m_{i,k}(\xi)\big|&\lesssim2^{(k-i)(|\alpha|-|\eta|)}\dsum\limits_{0\leq |\eta| \leq
|\alpha|}\big| \partial
^{\eta}\widehat{K}_{k}(\xi)\big|\\
&\lesssim 2^{(k-i)(|\alpha|-|\eta|)}2^{(-1+|\eta|)k}\|\Omega\|_{L^{1}} \nonumber\\
&\simeq 2^{-i(|\alpha|-|\eta|)}2^{(-1+|\alpha|)k}\|\Omega\|_{L^{1}}\nonumber\\
&\lesssim 2^k\|\Omega\|_{L^{1}}.\nonumber
\end{align}
As a consequence, after combing \eqref{m1} and \eqref{m2} above, we can
apply Lemma \ref{CD1} to $[b,{T}_{k}^i]$ to obtain that there exists  some constant
$0<\lambda<1$ such that
\begin{align}\label{t1 bTjk}
\|[b,{T}_{k}^i]f\|_{L^2}\lesssim 2^{-\beta \lambda
i}\|\Omega\|_{L^q}\|\nabla b\|_{L^\infty}  \|f\|_{L^2},\ \ \ i\ge 1.
\end{align}
By  using the Plancherel theorem and \eqref{m1}, we also for any fixed $q>1$
\begin{align}\label{t2 Tjk}\|{T}_{k}^i f\|_{L^2}\lesssim 2^{-k}  2^{-\beta
i}\|\Omega\|_{L^q}\|f\|_{L^ 2}.
\end{align}
We now estimate $I_1$, $I_2$ and $I_2$, respectively.
By using  \eqref{t1 bTjk} and the Littlewood--Paley theory,  we get
\begin{align*}
I_1&\lesssim \bigg(\dsum_{k\in \mathbb Z}\|[b,T_k^i](\Delta_{k-i}f)\|_{L^2}^2\bigg)^{1/2}\\
&\lesssim 2^{-\beta \lambda i}\|\Omega\|_{L^q} \|\nabla b\|_{L^\infty}\bigg(\dsum_{k\in \mathbb Z}\|\Delta_{k-i}f\|_{L^2}^2\bigg)^{1/2}\\
&\lesssim 2^{-\beta \lambda i}\|\Omega\|_{L^q}\|\nabla b\|_{L^\infty}\|f\|_{L^2}.
\end{align*}
 Now, we estimate $I_2.$  By using \eqref{t2 Tjk} and Lemma  \ref{CD0}, we get\begin{align*}I_2
&\lesssim \bigg(\dsum_{i\in \mathbb Z}\|T_k^i([b,\Delta_{k-i}]f)\|_{L^2}^2\bigg)^{1/2}\\
&\lesssim 2^{-(1+\beta)i} \|\Omega\|_{L^q}\bigg(\dsum_{i\in \mathbb
Z}2^{-2(k-i)}\|[b,\Delta_{k-i}]f\|_{L^2}^2\bigg)^{1/2}\\&\lesssim2^{-(1+\beta)i}\|\nabla b\|_{L^\infty}\|\Omega\|_{L^q}\|f\|_{L^2}.\end{align*}
Finally,
 by duality and by the estimate of $I_1$, we obtain that
\begin{align*}
I_3
&\lesssim 2^{-\beta\lambda i}\|\Omega\|_{L^q}\|\nabla b\|_{L^\infty}\|f\|_{L^2}.
\end{align*}
Combining the estimates of $I_1,\,I_2$ and $I_3$ above,  we obtain that there exists some constant $0<\alpha<1$ such that
\begin{align}\label{a1}\Big\|\sum_{k\in \Bbb Z}[b, T_k\Delta_{k-i}^3] f\Big\|_{L^{2}}\lesssim2^{-\alpha i}\|\nabla b\|_{L^\infty}\|\Omega\|_{L^q}\|f\|_{L^2}.\end{align}
Next, we give the $L^p$-norm of $\sum_{k\in \Bbb Z}[b, T_k\Delta_{k-i}^3] f.$
We  write
\begin{align*}
[b,T_{k}\Delta_{k-i}^3]f
&=[b,\Delta_{k-i}^2](T_{k}\Delta_{k-i}f)+\Delta_{k-i}^2([b,T_{k}]\Delta_{k-i}f)
+\Delta_{k-i}^2T_{k}([b,\Delta_{k-i}]f).
\end{align*}
Then we get
\begin{align*}
\Big\|\sum_{k\in \Bbb Z}[b, T_k\Delta_{k-i}^3] f\Big\|_{L^{p}}
& =\Big\|\sum_{k\in \Bbb Z}[b, \Delta_{k-i}^2]T_k\Delta_{k-i} f\Big\|_{L^{p}} +\Big\|\sum_{k\in \Bbb Z} \Delta_{k-i}^2[b,T_k]\Delta_{k-i}f\Big\|_{L^{p}}+\Big\|\sum_{k\in \Bbb Z} \Delta_{k-i}^2T_k[b,\Delta_{k-i}]f\Big\|_{L^{p}}\\&=:II_1+II_2+II_3.
\end{align*}
We now estimate $II_1$, $II_2$ and $II_3$, respectively. For $II_1,$ since  
\begin{align*}
\Big\|\sum_{k\in \Bbb Z}[b, \Delta_{k-i}^2]f_k\Big\|_{L^{p}}\le \Big\|\sum_{k\in \Bbb Z}[b, \Delta_{k-i}]\Delta_{k-i}f_k\Big\|_{L^{p}}+\Big\|\sum_{k\in \Bbb Z}\Delta_{k-i}[b, \Delta_{k-i}]f_k\Big\|_{L^{p}},
\end{align*}
then by duality, Lemma  \ref{CD0} and Littlewood-Paley theory, we get for $1<p<\infty,$ \begin{align}\label{io}
\Big\|\sum_{k\in \Bbb Z}[b, \Delta_{k-i}^2]f_k\Big\|_{L^{p}}\le \|\nabla b\|_{L^\infty}\Big\|\big(\sum_{k\in \Bbb Z}|2^{k-i}f_k|^2\big)^{1/2}\Big\|_{L^{p}}.
\end{align} On the other hand,
$|T_kf(x)|\le 2^{-k}M_\Omega f(x)$,  where
$$M_{\Omega}g(x)=\sup_{r>0}\frac{1}{r^n}\dint_{|x-y|<r}|\Omega(x-y)||g(y)|\,dy.$$ Then we get \begin{align}\label{0}\bigg\|\bigg(\sum_{k\in \mathbb Z}|T_kf_k|^2\bigg)^{1/2}\bigg\|_{L^p}\lesssim \|\Omega\|_{L^1}\bigg\|\bigg(\sum_{k\in \mathbb Z}|2^{-k}f_k|^2\bigg)^{1/2}\bigg\|_{L^p}.\end{align}
Thus by using \eqref{io}, \eqref{0}  and  Littlewood-Paley theory,  we get
\begin{align*}
II_1&\lesssim \|\nabla b\|_{L^\infty}\bigg\|\bigg(\dsum_{k\in \mathbb Z}|2^{k-i}T_k(\Delta_{k-i}f)|^2\bigg)^{1/2}\bigg\|_{L^p}\\
&\lesssim 2^{-i}\|\Omega\|_{L^1} \|\nabla b\|_{L^\infty}\bigg\|\bigg(\dsum_{k\in \mathbb Z}|\Delta_{k-i}f|^2\bigg)^{1/2}\bigg\|_{L^p}\\
&\lesssim \|\Omega\|_{L^1}\|\nabla b\|_{L^\infty}\|f\|_{L^p}.
\end{align*}
 Now, we estimate $II_2.$ It is well known that for any $g\in L^p(\mathbb R^n)$
$$\begin{array}{cl}|[b,T_{k}]g(x)|
&\lesssim\|\nabla b\|_{L^\infty}M_{\Omega}g(x).\end{array}$$
From this, we  get for $1<p<\infty,$$$\bigg\|\bigg(\sum_{k\in \mathbb Z}|[b,T_k]g_k|^2\bigg)^{1/2}\bigg\|_{L^p}\lesssim \|\Omega\|_{L^1}\|\nabla b\|_{L^\infty}\bigg\|\bigg(\sum_{k\in \mathbb Z}|g_k|^2\bigg)^{1/2}\bigg\|_{L^p}.$$ Then by using Littlewood-Paley theory, we get\begin{align*}II_2
&\lesssim \bigg\|\bigg(\dsum_{k\in \mathbb Z}|[b,T_k](\Delta_{k-i}f)|^2\bigg)^{1/2}\bigg\|_{L^p}\\
&\lesssim \|\nabla b\|_{L^\infty} \|\Omega\|_{L^1}\bigg\|\bigg(\dsum_{k\in \mathbb Z}|\Delta_{k-i}f|^2\bigg)^{1/2}\bigg\|_{L^p}\\&\lesssim\|\nabla b\|_{L^\infty} \|\Omega\|_{L^1}\|f\|_{L^p}.\end{align*}
Finally, by Littlewood-Paley theory, \eqref{0} and Lemma  \ref{CD0}, 
 we obtain that
\begin{align*}
II_3&\lesssim \bigg\|\bigg(\dsum_{k\in \mathbb Z}|T_k([b,\Delta_{k-i}]f)|^2\bigg)^{1/2}\bigg\|_{L^p}\\
&\lesssim \|\Omega\|_{L^1} \bigg\|\bigg(\dsum_{k\in \mathbb Z}|2^{-k}[b,\Delta_{k-i}]f|^2\bigg)^{1/2}\bigg\|_{L^p}\\
&\lesssim 2^{-i}\|\Omega\|_{L^1} \bigg\|\bigg(\dsum_{k\in \mathbb Z}|2^{i-k}[b,\Delta_{k-i}]f|^2\bigg)^{1/2}\bigg\|_{L^p}\\
&\lesssim 2^{-i}\|\Omega\|_{L^1}\|\nabla b\|_{L^\infty} \|f\|_{L^p}.
\end{align*}
Combining the estimates of $II_1,\,II_2$ and $II_3$ above,  we obtain that 
\begin{align} \label{o}\Big\|\sum_{k\in \Bbb Z}[b, T_k\Delta_{k-i}^3] f\Big\|_{L^{p}}\lesssim\|\nabla b\|_{L^\infty}\|\Omega\|_{L^1}\|f\|_{L^p}\end{align}
Interpolating between\eqref{a1} and \eqref{o}, we get for some $\gamma\in (0,1),$ \begin{align} \label{o1}\Big\|\sum_{k\in \Bbb Z}[b, T_k\Delta_{k-i}^3] f\Big\|_{L^{p}}\lesssim 2^{-\gamma i}\|\nabla b\|_{L^\infty}\|\Omega\|_{L^1}\|f\|_{L^p}\end{align}
Then we get for $\delta\in (0,1)$ and for any fixed $q>1,$
\begin{align*}
\|[b, T_{1,\,j}^{N}]f\|_{L^{p}}&\le\sum_{i=N(j-1)+1}^{N(j)}\Big\|\sum_{k\in \Bbb Z}[b, T_k\Delta_{k-i}^3] f\Big\|_{L^{p}}\\
&\lesssim\sum_{i=N(j-1)+1}^{N(j)}2^{-\gamma i}\|\nabla b\|_{L^\infty}\|\Omega\|_{L^q}\|f\|_{L^2}\\&\lesssim2^{-\delta N(j-1)}\|\nabla b\|_{L^\infty}\|\Omega\|_{L^q}\|f\|_{L^p}.
\end{align*}

The proof of Lemma \ref{v2} is complete. \qed

\quad

\begin{lemma}\label{lemtj} Let  $b\in Lip({\Bbb R}^n)$. If $\Omega\in L^\infty({\Bbb S}^{n-1}),$ then the operator $[b, T_{1,\,j}^{N}]$  is a  Calder\'{o}n-Zygmund operator with kernel $K_1(x,y)$ satisfies
\begin{align*}
|K_1(x,y)|\lesssim \frac{\|\Omega\|_{L^\infty}\|\nabla b\|_{L^\infty}}{|x-y|^n}
 \end{align*}
 and for $2|h|\le |x-y|,$
 \begin{align*}
|K_1(x,y+h)-K_1(x,y)|+|K_1(x,y)-K_1(x+h,y)|\lesssim\frac{\omega_{1,j}(\frac{|h|}{|x-y|})}{|x-y|^{n}},
 \end{align*}
 where $\omega_{1,j}(t)=\|\Omega\|_{L^\infty}\|\nabla b\|_{L^\infty}\min\{1,2^{N(j)}t\}.$
 \end{lemma}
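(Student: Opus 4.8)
The plan is to realize $[b,T_{1,j}^{N}]$ as a convolution-type commutator and then push the whole statement onto two pointwise bounds for the convolution kernel of $T_{1,j}^{N}$. Since $S_\ell f=f*\varphi_\ell$ and $T_kf=K_k*f$, formula \eqref{t1} gives $T_{1,j}^{N}f=L_j*f$ with
$$ L_j:=\sum_{k\in\mathbb{Z}}L_j^k,\qquad L_j^k:=K_k*\eta_j^k,\qquad \eta_j^k:=\varphi_{k-N(j)}-\varphi_{k-N(j-1)}. $$
Hence $[b,T_{1,j}^{N}]f(x)=\int_{\mathbb{R}^n}K_1(x,y)f(y)\,dy$ with $K_1(x,y)=(b(x)-b(y))\,L_j(x-y)$, and the claimed size bound follows immediately from $|b(x)-b(y)|\le\|\nabla b\|_{L^\infty}|x-y|$ once we know
$$ |L_j(z)|\lesssim\|\Omega\|_{L^\infty}\,|z|^{-(n+1)}.\qquad(\star) $$
For the smoothness bound, writing $z=x-y$ and differencing in the second variable,
$$ K_1(x,y+h)-K_1(x,y)=(b(x)-b(y+h))\bigl(L_j(z-h)-L_j(z)\bigr)+(b(y)-b(y+h))\,L_j(z); $$
when $2|h|\le|z|$ one has $|z-h|\approx|z|$ and $|h|/|z|\le\min\{1,2^{N(j)}|h|/|z|\}$ (because $N(j)\ge0$ and $|h|/|z|\le\tfrac12$), so the last term is dominated by $\|\nabla b\|_{L^\infty}\|\Omega\|_{L^\infty}|h|\,|z|^{-(n+1)}\le\omega_{1,j}(|h|/|z|)\,|z|^{-n}$ via $(\star)$, while the first term is controlled once we establish
$$ |L_j(z-h)-L_j(z)|\lesssim\|\Omega\|_{L^\infty}\,|z|^{-(n+1)}\min\{1,2^{N(j)}|h|/|z|\},\qquad 2|h|\le|z|.\qquad(\star\star) $$
Replacing $h$ by $-h$ and repeating the same splitting in the first variable handles $K_1(x,y)-K_1(x+h,y)$. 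So the lemma is reduced to $(\star)$ and $(\star\star)$ for the convolution kernel $L_j$.

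The estimate $(\star\star)$ I would deduce by combining $(\star)$ with a gradient bound
$$ |\nabla L_j(z)|\lesssim\|\Omega\|_{L^\infty}\,2^{N(j)}\,|z|^{-(n+2)}\qquad(\star\star\star) $$
through the mean value theorem: for $2|h|\le|z|$ one has $|z-th|\approx|z|$ for all $t\in[0,1]$, so $|L_j(z-h)-L_j(z)|\le\min\bigl\{|L_j(z-h)|+|L_j(z)|,\ |h|\sup_{0\le t\le1}|\nabla L_j(z-th)|\bigr\}$, which by $(\star)$ and $(\star\star\star)$ is $\lesssim\|\Omega\|_{L^\infty}|z|^{-(n+1)}\min\{1,2^{N(j)}|h|/|z|\}$.

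Both $(\star)$ and $(\star\star\star)$ are then proved by one and the same scheme: estimate $L_j^k(z)$, respectively $\nabla L_j^k(z)=(K_k*\nabla\eta_j^k)(z)$, and sum over $k$, splitting according to whether $2^k\ll|z|$, $2^k\approx|z|$, or $2^k\gg|z|$. The only inputs are the support and size of $K_k$, namely ${\rm supp}\,K_k\subset\{2^k<|x|\le2^{k+1}\}$, $\|K_k\|_{L^\infty}\lesssim\|\Omega\|_{L^\infty}2^{-k(n+1)}$ and $\|K_k\|_{L^1}\lesssim\|\Omega\|_{L^\infty}2^{-k}$ (here $\Omega\in L^\infty(\mathbb{S}^{n-1})$, and no cancellation of $\Omega$ is used); the normalizations $\|\eta_j^k\|_{L^1}\lesssim1$ and $\|\nabla\eta_j^k\|_{L^1}\lesssim2^{N(j)}2^{-k}$, the latter because the finer scale $2^{k-N(j)}$ dominates; and the rapid decay of the Schwartz bumps $\varphi_m$, $\nabla\varphi_m$ away from their scales $2^m$. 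When $2^k\gg|z|$ or $2^k\ll|z|$, the function $\eta_j^k$ (resp. $\nabla\eta_j^k$) is evaluated far out in its tail, and summing the resulting geometric series in $k$ — with the Schwartz decay exponent taken large enough to make the exponent of $2^k$ of the right sign — yields the desired bound, with even a harmless extra factor $2^{-N(j-1)(\cdot)}\le1$. When $2^k\approx|z|$ there are only $O(1)$ values of $k$, and one crudely bounds $|L_j^k(z)|\le\|K_k\|_{L^\infty}\|\eta_j^k\|_{L^1}\lesssim\|\Omega\|_{L^\infty}|z|^{-(n+1)}$, respectively $|\nabla L_j^k(z)|\le\|K_k\|_{L^\infty}\|\nabla\eta_j^k\|_{L^1}\lesssim\|\Omega\|_{L^\infty}2^{N(j)}|z|^{-(n+2)}$; it is exactly this comparable-scales term that contributes the single factor $2^{N(j)}$ in $(\star\star\star)$. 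Finally, the $L^2$-boundedness of $[b,T_{1,j}^{N}]$ needed for it to be a genuine $\omega_{1,j}$-Calder\'on--Zygmund operator is provided by Lemma \ref{v2} with $q=2$ (bounding $2^{-\delta N(j-1)}\le1$ and $\|\Omega\|_{L^2}\lesssim\|\Omega\|_{L^\infty}$).

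The main obstacle is the precise bookkeeping behind $(\star\star\star)$: one must see that the commutator kernel acquires exactly one power of $2^{N(j)}$ and no more. This forbids naive $L^\infty\times L^\infty$ estimates on $\nabla L_j^k$ in the regime $2^k\approx|z|$ — those would cost $2^{N(j)(n+1)}$ — and instead forces the $\|K_k\|_{L^\infty}\times\|\nabla\eta_j^k\|_{L^1}$ pairing, which exploits the thin annular support of $K_k$; one must also check, across the three ranges of $k$, that every geometric series converges with the correct homogeneity in $|z|$ and that all implicit constants are independent of $j$, the entire $j$-dependence being carried by $\omega_{1,j}$ through the factor $2^{N(j)}$.
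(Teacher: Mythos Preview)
Your proposal is correct and follows essentially the same route as the paper: reduce to pointwise size and gradient bounds for the convolution kernel $L_j=\sum_k K_k*(\varphi_{k-N(j)}-\varphi_{k-N(j-1)})$, split according to the relative size of $2^k$ and $|z|$, and use $\|K_k\|_{L^\infty}\times\|\nabla\eta_j^k\|_{L^1}$ at comparable scales to isolate the single factor $2^{N(j)}$. The only cosmetic differences are that the paper bounds $\sum_k|K_k*\varphi_{k-N(j)}|$ and $\sum_k|\nabla(K_k*\varphi_{k-N(j)})|$ uniformly in $N(j)$ (so the difference follows), and merges your middle and large-$k$ regimes into a single case $|x|\le 2^{k+2}$.
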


\quad

In fact, applying Lemma \ref{lemtj} and Lemma \ref{lem 4} to $T=[b, T_{1,\,j}^{N}]$, we get that for $1<p<\infty$ and $w\in A_p$\begin{align*}
\|[b, T_{1,\,j}^{N}]f\|_{L^p(w)}&\lesssim \|\Omega\|_{L^{\infty}}\|\nabla b\|_{L^\infty}(1+N(j)){\{w\}}_{A_p}\|f\|_{L^p(w)},
\end{align*} which gives the proof of  \eqref{vq3}.

\quad\quad

\emph{\textbf{Proof of Lemma \ref{lemtj}}.}
In order to get the required size and smoothness estimates for
 $K_1(x,y)$,
we need to study the kernels of  $T_{k} S_{k-N(j)}$.
We begin with verifying the size and the smoothness estimates for the kernel of $T_{k} S_{k-N(j)}$, that is,
to get the pointwise estimate of $|K_{k}\ast \varphi_{k-N(j)}(x)|$ and $|\nabla K_{k}\ast \varphi_{k-N(j)}(x)|.$

  Let $x \in \mathbb{R}^{n}$. From the definition of $K_k $ and $\varphi_{k-N(j)}(x)$, we get
\begin{align*}|K_{k}\ast \varphi_{k-N(j)}(x)| &=\Bigg|\int_{\mathbb{R}^{n}} \frac{\Omega(y^{\prime})}{|y|^{n+1}} 1_{2^{k}<|y|<2^{k+1}}(y) 2^{-(k-N(j)) n} \varphi\Big(\frac{x-y}{2^{k-N(j)}}\Big) d y\Bigg|.  \end{align*}

We  now consider the following two cases: $|x|>2^{k+2}$ and $|x|\le 2^{k+2}.$

Case 1:   $|x|>2^{k+2}$.

Note that in this case, we get $|x-y|\ge |x|-|y|\ge |x|-\frac{|x|}{2}\ge \frac{|x|}{2}$.
Hence,
\begin{align*}
|K_{k}\ast \varphi_{k-N(j)}(x)|
%&=\bigg|\int_{\mathbb{R}^{n}} \frac{\Omega(y^{\prime})}{|y|^{n+1}} 1_{2^{k}<|y|<2^{k+1}}(y) \varphi_{k-N(j)}(x-y) \,d y\bigg|\\
&\leq\int_{\mathbb{R}^{n}} \frac{|\Omega(y^{\prime})|}{|y|^{n+1}} 1_{2^{k}<|y|<2^{k+1}}(y) \frac{2^{2(k-N(j))}}{(2^{k-N(j)}+|x-y|)^{n+2}} d y\\
&\lesssim \frac{2^{2(k-N(j))}}{|x|^{n+2}}\int_{\mathbb{R}^{n}} \frac{|\Omega(y^{\prime})|}{|y|^{n+1}} 1_{2^{k}<|y|<2^{k+1}}(y) d y \\
&\lesssim \|\Omega\|_{L^1}\frac{2^{-2N(j)} 2^{k}}{|x|^{n+2}}.%\chi_{|x|>2^{k+2}},
\end{align*}

Case 2: $|x|\le2^{k+2}.$

We get
\begin{align*}
|K_{k}\ast \varphi_{k-N(j)}(x)| &\le\int_{\mathbb{R}^{n}} \frac{|\Omega(y^{\prime})|}{|y|^{n+1}} 1_{2^{k}<|y|<2^{k+1}}(y) 2^{-(k-N(j)) n} \varphi\Big(\frac{x-y}{2^{k-N(j)}}\Big)\,dy\\
&\lesssim \|\Omega\|_{L^\infty}2^{-k(n+1)}\int_{\mathbb{R}^{n}}2^{-(k-N(j)) n}\varphi\Big(\frac{x-y}{2^{k-N(j)}}\Big)  d y\\
&\lesssim \|\Omega\|_{L^\infty}2^{-k(n+1)}. %\chi_{|x|\le 2^{k+2}}.
\end{align*}

Combining  the estimates in the  two cases above, we have that
\begin{align}\label{b0}
&\sum_{k\in \Bbb Z}|K_{k}\ast \varphi_{k-N(j)}(x-y)|\\
&\lesssim \|\Omega\|_{L^\infty}\sum_{k\in \Bbb Z}\frac{ 2^{k}}{|x-y|^{n+2}}1_{|x-y|>2^{k+2}}(x-y)+ \|\Omega\|_{L^\infty}\sum_{k\in \Bbb Z}2^{-k(n+1)}1_{|x-y|\le 2^{k+2}}(x-y)\nonumber\\
&\simeq \frac{\|\Omega\|_{L^\infty}}{|x-y|^{n+1}}\sum_{k\in \Bbb Z}\frac{ 2^{k}}{|x-y|}1_{|x-y|>2^{k+2}}(x-y)+\frac{\|\Omega\|_{L^\infty}}{|x-y|^{n+1}}\nonumber\\&\lesssim\frac{\|\Omega\|_{L^\infty}}{|x-y|^{n+1}},\nonumber
\end{align} and
\begin{align}\label{b2}\sum_{k\in \Bbb Z}|(b(x)-b(y))K_{k}\ast \varphi_{k-N(j)}(x-y)|&
\lesssim\|\nabla b\|_{L^\infty} \frac{\|\Omega\|_{L^\infty}}{|x-y|^n}.
\end{align}
Then we get \begin{align}\label{K10}|K_1(x,y)|&\lesssim \|\nabla b\|_{L^\infty} \frac{\|\Omega\|_{L^\infty}}{|x-y|^n}.\end{align}

We now consider $\nabla (K_{k}\ast \varphi_{k-N(j)})(x).$
Again,  we consider the following  two cases $|x|>2^{k+2}$ and $|x|\le 2^{k+2}.$

Case 1:  $|x|>2^{k+2}$.

Since $|x-y|\ge |x|-|y|\ge |x|-\frac{|x|}{2}\ge \frac{|x|}{2},$ we have
\begin{align*}
|\nabla (K_{k}\ast \varphi_{k-N(j)})(x)| &=| K_{k}\ast \nabla\varphi_{k-N(j)}(x)|\\
&=\bigg|\int_{\mathbb{R}^{n}} \frac{\Omega(y^{\prime})}{|y|^{n+1}} 1_{2^{k}<|y|<2^{k+1}}(y) 2^{N(j)-k}(\nabla\varphi)_{k-N(j)}(x-y)d y\bigg|\\
&\lesssim \int_{\mathbb{R}^{n}} \frac{|\Omega(y^{\prime})|}{|y|^{n+1}} 1_{2^{k}<|y|<2^{k+1}}(y) 2^{N(j)-k}\frac{2^{3(k-N(j))}}{(2^{k-N(j)}+|x-y|)^{n+3}}d y\\
&\lesssim \frac{2^{2(k-N(j))}}{|x|^{n+3}}\int_{\mathbb{R}^{n}} \frac{|\Omega(y^{\prime})|}{|y|^{n+1}} 1_{2^{k}<|y|<2^{k+1}} (y)d y  \\
&\lesssim \|\Omega\|_{L^1} \frac{2^{-2N(j)}2^k}{|x|^{n+3}}.%\chi_{|x|>2^{k+2}}
\end{align*}

Case 2:  $|x|\le 2^{k+1}$.

We get
\begin{align*}|\nabla (K_{k}\ast \varphi_{k-N(j)})(x)|
&=\bigg|\int_{\mathbb{R}^{n}} \frac{\Omega(y^{\prime})}{|y|^{n+1}} 1_{2^{k}<|y|<2^{k+1}}(y) 2^{N(j)-k}(\nabla\varphi)_{k-N(j)}(x-y)d y\bigg|\\
&\lesssim \int_{\mathbb{R}^{n}} \frac{|\Omega(y^{\prime})|}{|y|^{n+1}} 1_{2^{k}<|y|<2^{k+1}}(y) 2^{N(j)-k}|(\nabla\varphi)_{k-N(j)}(x-y)|d y\\
&\lesssim 2^{N(j)}\frac{\|\Omega\|_{L^\infty}}{2^{k(n+2)}}\int_{\mathbb{R}^{n}}|(\nabla\varphi)_{k-N(j)}(x-y)| d y\\ &\lesssim2^{N(j)}\frac{\|\Omega\|_{L^\infty}}{2^{k(n+2)}}. %\chi_{|x|\le2^{k+2}}.
\end{align*}

Thus, for any $x,y,z$ satisfying $2|y-z|\le |x-y|$ (which gives $|x-y|\simeq|x-z|$), we have
\begin{align} \label{b3}
&\sum_{k\in \Bbb Z}|K_{k}\ast \varphi_{k-N(j)}(x-y)-K_{k}\ast \varphi_{k-N(j)}(x-z)|\\
&\lesssim\sum_{k\in \Bbb Z}|\nabla K_{k}\ast \varphi_{k-N(j)}((1-\theta)(x-y)+\theta(x-z))||y-z| \nonumber\\&\lesssim\|\Omega\|_{L^\infty}\bigg(\sum_{k\in \Bbb Z}\frac{2^k}{|x-y|^{n+3}}1_{|x-y|>2^{k+2}}(x-y)+2^{N(j)}\sum_{k\in \Bbb Z}\frac{1}{2^{k(n+2)}}1_{|x-y|\le2^{k+2}}(x-y)\bigg)|y-z| \nonumber\\
&\lesssim\|\Omega\|_{L^\infty}\bigg(\frac{1}{|x-y|^{n+2}}\sum_{k\in \Bbb Z}\frac{2^k}{|x-y|}1_{|x-y|>2^{k+2}}(x-y)+2^{N(j)}\frac{1}{|x-y|^{n+2}}\bigg)|y-z|\nonumber
\\
&\lesssim\|\Omega\|_{L^\infty}\bigg(\frac{1}{|x-y|^{n+2}}+2^{N(j)}\frac{1}{|x-y|^{n+2}}\bigg)|y-z|\nonumber
\\
&\lesssim\|\Omega\|_{L^\infty}2^{N(j)}\frac{|y-z|}{|x-y|^{n+2}}.\nonumber
\end{align}

By using the above inequality and $\eqref{b0}$, we have that for any $x,y,z$ satisfying $2|y-z|\le |x-y|$,
%then $|x-y|\simeq|x-z|$ we get
\begin{align} \label{b4}
&\sum_{k\in \Bbb Z} \Big|(b(x)-b(y))K_{k}\ast \varphi_{k-N(j)}(x-y)-(b(x)-b(z))K_{k}\ast \varphi_{k-N(j)}(x-z)\Big|\\
&\le|b(x)-b(y)|\sum_{k\in \Bbb Z}|K_{k}\ast \varphi_{k-N(j)}(x-y)-K_{k}\ast \varphi_{k-N(j)}(x-z)|\nonumber\\
&\quad+|b(y)-b(z)|\sum_{k\in \Bbb Z}|K_{k}\ast \varphi_{k-N(j)}(x-z)| \nonumber\\
&\lesssim\|\Omega\|_{L^\infty}\|\nabla b\|_{L^\infty}\Big(2^{N(j)}\frac{|y-z|}{|x-y|^{n+1}}+\frac{|y-z|}{|x-z|^{n+1}}\Big ) \nonumber\\
&\lesssim\|\Omega\|_{L^\infty}\|\nabla b\|_{L^\infty}2^{N(j)}\frac{|y-z|}{|x-y|^{n+1}}. \nonumber
\end{align}
Combined \eqref{b2} and \eqref{b4},
for any $x,y,z$ satisfying $2|y-z|\leq |x-y|,$ we get
\begin{align*}
\sum_{k\in \Bbb Z}|(b(x)-b(y))K_{k}\ast \varphi_{k-N(j)}(x-y)-(b(x)-b(z))K_{k}\ast \varphi_{k-N(j)}(x-z)|\lesssim\frac{\omega_{1,j}\Big(\frac{|y-z|}{|x-y|}\Big)}{|x-y|^n},
\end{align*}where
\begin{align*}
\omega_{1,j}(t) =\|\Omega\|_{L^{\infty}} \|\nabla b\|_{L^\infty}\min (1,2^{N(j)} t).
\end{align*}
The Dini norm of this function $\omega_{1,j}$ is given by
\begin{align}\label{dini norm omega j}
 \int_{0}^{1} \omega_{1,j}(t) \frac{d t}{t} &\lesssim \|\Omega\|_{L^{\infty}} \|\nabla b\|_{L^\infty}\bigg(\int_{0}^{2^{-N(j)}} 2^{N(j)} t \frac{d t}{t}+\int_{2^{-N(j)}}^{1} \frac{d t}{t}\bigg) \\
 &=\|\Omega\|_{L^{\infty}} \|\nabla b\|_{L^\infty}(1+\log 2^{N(j)})\nonumber\\
 & \lesssim\|\Omega\|_{L^{\infty}} \|\nabla b\|_{L^\infty}(1+N(j)).\nonumber
 \end{align}
Thus, we obtain that for any $x,y,z$ satisfying $2|y-z|\le |x-y|,$
\begin{align}|K_1(y,x)-K_1(z,x)|+|K_1(x,y)-K_1(x,z)|\lesssim\frac{\omega_{1,j}\Big(\frac{|y-z|}{|x-y|}\Big)}{|x-y|^n}.\end{align}

The proof of Lemma  \ref{lemtj} is complete. \qed

\quad\quad

\subsection { Proof of \eqref{vv1} and \eqref{vq31}} To begin with,  we need the following lemma which gives \eqref{vv1}.

\begin{lemma}\label{v21} Let  $b\in Lip({\Bbb R}^n)$ and $\Omega\in L^1({\Bbb S}^{n-1})$ and satisfies \eqref{mean zero}. The following inequality holds for  some $\tau\in (0,1),$\begin{align}\label{frourier 2}
\|[b, T_{2,\,j}^{N}] f\|_{L^{2}}
& \lesssim \|\Omega\|_{L^{1}}\|\nabla b\|_{L^\infty} 2^{-\tau N(j-1)}\|f\|_{L^{2}}.
\end{align}
\end{lemma}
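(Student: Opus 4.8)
The plan is to follow the template of the proof of Lemma~\ref{v2} (which gives \eqref{vv}), the decisive new input being that the cancellation conditions \eqref{mean zero}, rather than an $L^q$ hypothesis on $\Omega$, now supply the decay, so that $\Omega\in L^1(\mathbb S^{n-1})$ is enough. Recalling the definition \eqref{t2} of $T_{2,j}^{N}$ and the telescoping identity $S_m-S_{m+1}=\Delta_m^3$, I would write $S_{k+N(j-1)}-S_{k+N(j)}$ as a sum of $N(j)-N(j-1)$ Littlewood--Paley blocks and thereby
\[
[b,T_{2,j}^{N}]f=\sum_{i}\ \sum_{k\in\mathbb Z}[b,T_k\Delta_{k+i}^3]f ,
\]
the index $i$ running over $N(j)-N(j-1)$ consecutive integers with $i\ge N(j-1)$. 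Hence it suffices to establish, for each such $i\ge 1$ (the at most one term with $i=0$, occurring only when $N(j-1)=0$, being handled by the same argument without the gain below), the single-scale estimate
\[
\Big\|\sum_{k\in\mathbb Z}[b,T_k\Delta_{k+i}^3]f\Big\|_{L^2}\lesssim 2^{-\tau_0 i}\,\|\Omega\|_{L^1}\|\nabla b\|_{L^\infty}\|f\|_{L^2}
\]
for some fixed $\tau_0\in(0,1)$, since summing $\sum_{i\ge N(j-1)}2^{-\tau_0 i}\lesssim 2^{-\tau_0 N(j-1)}$ then yields \eqref{frourier 2} with $\tau=\tau_0$.

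For the single-scale estimate, set $T_k^{(i)}:=T_k\Delta_{k+i}$, a Fourier multiplier with symbol $m_{i,k}(\xi)=\widehat K_k(\xi)\,\widehat\psi(2^{k+i}\xi)$, supported where $|2^k\xi|\approx 2^{-i}$. Here \eqref{mean zero} enters through the vanishing of the zeroth and first moments of $K_k$, namely $\int_{\mathbb R^n}K_k=0$ and $\int_{\mathbb R^n}x\,K_k(x)\,dx=0$; a second-order Taylor expansion of the exponential then gives the quadratic vanishing $|\widehat K_k(\xi)|\lesssim\|\Omega\|_{L^1}2^{-k}\min\{1,|2^k\xi|^2\}$. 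Combined with the derivative bounds $|\partial^\eta\widehat K_k(\xi)|\lesssim\|\Omega\|_{L^1}2^{k(|\eta|-1)}$ — which, exactly as in the proof of Lemma~\ref{v2}, need only $\Omega\in L^1$ — and the Leibniz rule, this yields $\|m_{i,k}\|_{L^\infty}\lesssim\|\Omega\|_{L^1}2^{-k}2^{-2i}$ and $\|\partial^\alpha m_{i,k}\|_{L^\infty}\lesssim\|\Omega\|_{L^1}2^{k}$ for $|\alpha|=2$. These are precisely the hypotheses of Lemma~\ref{CD1} with $\beta=2$ (carrying an overall factor $\|\Omega\|_{L^1}$), so there is $\lambda\in(0,1)$ with $\|[b,T_k^{(i)}]g\|_{L^2}\lesssim 2^{-2\lambda i}\|\Omega\|_{L^1}\|\nabla b\|_{L^\infty}\|g\|_{L^2}$, while Plancherel gives $\|T_k^{(i)}g\|_{L^2}\lesssim\|\Omega\|_{L^1}2^{-k}2^{-2i}\|g\|_{L^2}$.

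Next I would expand the commutator. Since $T_k$ and $\Delta_{k+i}$ are convolution operators they commute, so $T_k\Delta_{k+i}^3=\Delta_{k+i}T_k^{(i)}\Delta_{k+i}$, and the three-term commutator identity gives
\[
[b,T_k\Delta_{k+i}^3]=[b,\Delta_{k+i}]\,T_k^{(i)}\Delta_{k+i}+\Delta_{k+i}\,[b,T_k^{(i)}]\,\Delta_{k+i}+\Delta_{k+i}\,T_k^{(i)}\,[b,\Delta_{k+i}] ,
\]
whence $\big\|\sum_k[b,T_k\Delta_{k+i}^3]f\big\|_{L^2}\le J_1+J_2+J_3$, the three pieces corresponding to the three summands. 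For $J_2$, the almost orthogonality of $\{\Delta_{k+i}\}_{k\in\mathbb Z}$ (nearly disjoint frequency supports) gives $J_2\lesssim\big(\sum_k\|[b,T_k^{(i)}]\Delta_{k+i}f\|_{L^2}^2\big)^{1/2}$, and the commutator bound above together with Littlewood--Paley theory makes this $\lesssim 2^{-2\lambda i}\|\Omega\|_{L^1}\|\nabla b\|_{L^\infty}\|f\|_{L^2}$. For $J_3$, almost orthogonality again gives $J_3\lesssim\big(\sum_k\|T_k^{(i)}[b,\Delta_{k+i}]f\|_{L^2}^2\big)^{1/2}$; inserting $\|T_k^{(i)}g\|_{L^2}\lesssim\|\Omega\|_{L^1}2^{-k}2^{-2i}\|g\|_{L^2}$ and writing $2^{-2k}=2^{2i}2^{-2(k+i)}$ reduces this to $2^{-i}\|\Omega\|_{L^1}\big(\sum_k 2^{-2(k+i)}\|[b,\Delta_{k+i}]f\|_{L^2}^2\big)^{1/2}$, which by Lemma~\ref{CD0} with $p=2$ is $\lesssim 2^{-i}\|\Omega\|_{L^1}\|\nabla b\|_{L^\infty}\|f\|_{L^2}$. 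For $J_1$ the outermost operator is $[b,\Delta_{k+i}]$, so I would dualize: since $[b,\Delta_{k+i}]^{\ast}=-[b,\Delta_{k+i}]$ and $(T_k^{(i)})^{\ast}$ has the same operator norm, $J_1=\sup_{\|h\|_{L^2}=1}\big|\sum_k\langle\Delta_{k+i}f,\,(T_k^{(i)})^{\ast}[b,\Delta_{k+i}]h\rangle\big|$; Cauchy--Schwarz in $k$, $\sum_k\|\Delta_{k+i}f\|_{L^2}^2\approx\|f\|_{L^2}^2$, and the same manipulation as in $J_3$ now applied to $h$ give $J_1\lesssim 2^{-i}\|\Omega\|_{L^1}\|\nabla b\|_{L^\infty}\|f\|_{L^2}$. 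Altogether the single-scale estimate holds with $\tau_0=\min\{1,2\lambda\}$, and summing over $i$ completes the proof.

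The main difficulty is conceptual rather than computational: one must recognise that the full cancellation \eqref{mean zero} (vanishing mean \emph{and} vanishing first moments of $\Omega$) purchases the quadratic decay $|2^k\xi|^2$ of $\widehat K_k$ near the origin while keeping the constant at the level of $\|\Omega\|_{L^1}$, and that this is exactly the input that allows Lemma~\ref{CD1} to be run on the low-frequency pieces $T_k^{(i)}=T_k\Delta_{k+i}$; everything else is the Littlewood--Paley bookkeeping already used in Lemma~\ref{v2}, the only mildly delicate point being the duality step in $J_1$ needed to pair the $2^{-k}$ decay of $T_k^{(i)}$ against the $2^{-(k+i)}$-weighted square-function estimate of Lemma~\ref{CD0}.
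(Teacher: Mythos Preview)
Your proposal is correct and follows essentially the same route as the paper's proof: telescope $S_{k+N(j-1)}-S_{k+N(j)}=\sum_{i=N(j-1)+1}^{N(j)}\Delta_{k+i}^3$, expand each $[b,T_k\Delta_{k+i}^3]$ into the same three commutator pieces, feed the cancellation \eqref{mean zero} into the multiplier bounds $\|\widetilde m_{i,k}\|_{L^\infty}\lesssim\|\Omega\|_{L^1}2^{-k}2^{-2i}$ and $\|\partial^\alpha\widetilde m_{i,k}\|_{L^\infty}\lesssim\|\Omega\|_{L^1}2^{k}$, invoke Lemma~\ref{CD1} for the middle piece, Lemma~\ref{CD0} with the Plancherel bound for the third, and duality for the first, then sum the geometric series in $i$. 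Your treatment of $J_1$ by explicit dualization is in fact more transparent than the paper's one-line appeal to duality; one cosmetic point is that the index $i$ actually runs from $N(j-1)+1$ to $N(j)$, so the case $i=0$ you single out never arises.
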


\emph{ Proof.} Recall that

 \begin{align*}[b, T_{2,\,j}^{N}]f&=\sum_{k\in \Bbb Z}[b, T_k(S_{k+N(j-1)}-S_{k+N(j)})] f, \end{align*}
where $$S_{k+N(j-1)}-S_{k+N(j)}=\sum_{i=N(j-1)+1}^{N(j)}(S_{k+i-1}-S_{k+i})=\sum_{i=N(j-1)+1}^{N(j)}\Delta_{k+i}^3.$$
Then for $1<p<\infty$
\begin{align*}\|[b, T_{2,\,j}^{N}] f\|_{L^{2}}&\le \sum_{i=N(j-1)+1}^{N(j)}\Big\|\sum_{k\in \Bbb Z}[b, T_k\Delta_{k+i}^3] f\Big\|_{L^{2}}. \end{align*}
Now, we give the dedicated $L^2$-norm of $\sum_{k\in \Bbb Z}[b, T_k\Delta_{k+i}^3] f.$ For any $i\in \Bbb N,\, k\in \mathbb{Z}$, denote by  $\widetilde{T}_k^if(x)=T_{k}\Delta_{k+i}f(x).$
 We may write
$$\begin{array}{cl}[b,T_{k}\Delta_{k+i}^3]f
&=[b,\Delta_{k+i}](\widetilde{T}_{k}^i\Delta_{k+i}f)+\Delta_{k+i}([b,\widetilde{T}_{k}^i]\Delta_{k+i}f)
+\Delta_{k+i}\widetilde{T}_{k}^i([b,\Delta_{k+i}]f).
\end{array}$$
Then we get
\begin{align*}
\Big\|\sum_{k\in \Bbb Z}[b,T_{k}\Delta_{k+i}^3]f\Big\|_{L^{2}}
& =\Big\|\sum_{k\in \Bbb Z}[b, \Delta_{k+i}]\widetilde{T}_k^i\Delta_{k+i} f\Big\|_{L^{2}}+\Big\|\sum_{k\in \Bbb Z} \Delta_{k+i}[b,\widetilde{T}_k^i]\Delta_{k+i}f\Big\|_{L^{2}}+\Big\|\sum_{k\in \Bbb Z} \Delta_{k+i}\widetilde{T}_k^i[b,\Delta_{k+i}]f\Big\|_{L^{2}}\\
&=:I\!I\!I_1+I\!I\!I_2+I\!I\!I_3.
\end{align*}
Denote
\begin{align*}
\widehat{{\widetilde{T}}_{k}^i f}(\xi)= \widehat{K}_{k}(\xi) \widehat{\psi}(2^{k+i} \xi) \widehat{f}(\xi)=: \widetilde{m}_{i,k}(\xi) \widehat{f}(\xi).
\end{align*}
We now consider the pointwise estimates for $\widetilde{m}_{i,k}(\xi)$ and its derivatives.
By using the cancellation condition of $\Omega$ as in \eqref{mean zero}, it is easy to verify that
$$|\widehat{K}_{k}(\xi)| \lesssim 2^{-k}|2^k\xi|^2\|\Omega\|_{L^1} .$$
Since $\widehat{\psi} \in \mathcal{S}(\mathbb{R}^{n})$ with  ${\rm supp}\widehat{\psi}\subset \{1/2\le |\xi|\leq 2\}$, we see that $|2^k\xi|\approx 2^{-i}$. Hence,
\begin{align}\label{m3}
|\widetilde{m}_{i,k}(\xi)|\lesssim2^{-k}2^{-2i}\|\Omega\|_{L^1} .
\end{align}
and for multi-index $\alpha$ with $|\alpha|=2,$
\begin{align}\label{m4}
|\partial^\alpha \widetilde{m}_{i,k}(\xi)|\lesssim2^{k}\|\Omega\|_{L^1}.
\end{align}

After combining \eqref{m3} and \eqref{m4}, we apply Lemma \ref{CD1} to $[b,{\widetilde{T}}_{k}^i]$ to obtain that there exists some constant
$0<\lambda<1$ such that
\begin{align}\label{t3}
\|[b,{\widetilde{T}}_{k}^i]f\|_{L^2}\lesssim2^{-2\lambda i}\|\Omega\|_{L^1}\|\nabla b\|_{L^\infty}  \|f\|_{L^2},\ \ \ i\ge 1.
\end{align}
Moreover, by using the Plancherel theorem and \eqref{m3}, we
get
\begin{align}\label{t4}\|{\widetilde{T}}_{k}^i f\|_{L^2}\lesssim 2^{-k}  2^{-2i}\|\Omega\|_{L^1}\|f\|_{L^ 2}.\end{align}
We now  estimate $I\!I\!I_1$, $I\!I\!I_2$ and $I\!I\!I_3$, respectively.
By using \eqref{t3} and the Littlewood--Paley theory, we get that
\begin{align*}
I\!I\!I_1&\lesssim \bigg(\dsum_{k\in \mathbb Z}\|[b,{\widetilde{T}}_{k}^i](\Delta_{k+i}f)\|_{L^2}^2\bigg)^{1/2}\\
&\lesssim 2^{-2\lambda i}\|\Omega\|_{L^1}\|\nabla b\|_{L^\infty} \bigg(\dsum_{k\in \mathbb Z}\|\Delta_{k+i}f\|_{L^2}^2\bigg)^{1/2}\\
&\lesssim2^{-2\lambda i
}\|\nabla b\|_{L^\infty}\|\Omega\|_{L^1}\|f\|_{L^2}.
\end{align*}
 Now, we estimate $I\!I\!I_2.$  By using \eqref{t4} and Lemma  \ref{CD0}, we get
 \begin{align*}
 I\!I\!I_2
&\lesssim \bigg(\dsum_{k\in \mathbb Z}\|{\widetilde{T}}_{k}^i([b,\Delta_{k+i}]f)\|_{L^2}^2\bigg)^{1/2}\\
&\lesssim2^{-i}\|\Omega\|_{L^1}\bigg(\dsum_{k\in \mathbb
Z}2^{-2(k+i)}\|[b,\Delta_{k+i}]f\|_{L^2}^2\bigg)^{1/2}\\
&\lesssim2^{-i}\|\nabla b\|_{L^\infty}\|\Omega\|_{L^1}\|f\|_{L^2}.\end{align*}
Finally,
 by duality and  the estimate of $I\!I\!I_1$, we obtain that
 \begin{align*}
I\!I\!I_3
&\lesssim 2^{-i}\|\nabla b\|_{L^\infty}\|\Omega\|_{L^1}\|f\|_{L^2}.
\end{align*}
It follows from $I\!I\!I_1,\,I\!I\!I_2$ and $I\!I\!I_3$ that there exists some constant $0<\gamma<1,$
$$\Big\|\sum_{k\in \Bbb Z}[b, T_k\Delta_{k+i}^3] f\Big\|_{L^{2}}\lesssim2^{-\gamma i}\|\nabla b\|_{L^\infty}\|\Omega\|_{L^1}\|f\|_{L^2},\quad \hbox{for}\,\,i\ge 1.$$
Then for $\tau\in (0,1)$\begin{align*}
\|[b, T_{2,\,j}^{N}]f\|_{L^{2}}&\le\sum_{i=N(j-1)+1}^{N(j)}\Big\|\sum_{k\in \Bbb Z}[b, T_k\Delta_{k+i}^3] f\Big\|_{L^{2}}\\
&\lesssim\sum_{i=N(j-1)+1}^{N(j)}2^{- \gamma i}\|\nabla b\|_{L^\infty}\|\Omega\|_{L^1}\|f\|_{L^2}\\&\lesssim2^{- \tau N(j-1)}\|\nabla b\|_{L^\infty}\|\Omega\|_{L^1}\|f\|_{L^2}.
 \end{align*}
 
 The proof of Lemma \ref{v21} is complete.
 \qed

\quad\quad
\begin{lemma}\label{lemtj1} Let  $b\in Lip({\Bbb R}^n)$. If $\Omega\in L^1({\Bbb S}^{n-1})$ and satisfies \eqref{mean zero}, then the operator $[b, T_{2,\,j}^{N}]$  is a  Calder\'{o}n-Zygmund operator with kernel $K_2(x,y)$ satisfies

\begin{align*}
|K_2(x,y)|\lesssim \frac{\|\Omega\|_{L^1}\|\nabla b\|_{L^\infty}}{|x-y|^n}
 \end{align*}
 and for $2|h|\le |x-y|,$
 \begin{align*}
|K_2(x,y+h)-K_2(x,y)|+|K_2(x,y)-K_2(x+h,y)|\lesssim\frac{\omega_{2,j}(\frac{|h|}{|x-y|})}{|x-y|^{n}},
 \end{align*}
 where $\omega_{2,j}(t)=\|\Omega\|_{L^1}\|\nabla b\|_{L^\infty}\min\{1,2^{N(j)}t\}.$

\end{lemma}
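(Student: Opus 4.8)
The plan is to follow the same template as the proof of Lemma \ref{lemtj}, now working with the kernels of $T_k S_{k+N(j)}$ instead of $T_k S_{k-N(j)}$, and exploiting the cancellation condition \eqref{mean zero} on $\Omega$ rather than the $L^\infty$ hypothesis. Write $[b,T_{2,j}^N]$ with kernel $K_2(x,y)=(b(x)-b(y))\sum_{k\in\ZZ} (K_k\ast\varphi_{k+N(j)})(x-y)$, so everything reduces to pointwise bounds on $|K_k\ast\varphi_{k+N(j)}(x)|$ and $|\nabla(K_k\ast\varphi_{k+N(j)})(x)|$, summed over $k\in\ZZ$. The key new input is that, because $\Omega$ satisfies \eqref{mean zero}, one gains a factor $|2^k\xi|^2$ on the Fourier side (as already used in \eqref{m3}), which on the spatial side translates into extra decay making $\sum_k$ converge with only $\Omega\in L^1(\sn)$; concretely, for $|x|$ small relative to $2^{k}$ the cancellation lets us replace $\varphi\bigl(\frac{x-y}{2^{k+N(j)}}\bigr)$ by its second-order Taylor remainder (the zeroth- and first-order terms are killed by \eqref{mean zero}, since $\varphi$ is smooth and $2^{k+N(j)}\gg 2^k\gtrsim|y|$), producing a factor $(|y|/2^{k+N(j)})^2\lesssim 2^{-2N(j)}$.

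First I would split, as in Lemma \ref{lemtj}, into the two regimes $|x|>2^{k+2}$ and $|x|\le 2^{k+2}$. In the far regime $|x-y|\simeq|x|\gtrsim 2^k$ the smoothness/decay of $\varphi$ already gives $|K_k\ast\varphi_{k+N(j)}(x)|\lesssim \|\Omega\|_{L^1}\,2^{(k+N(j))\cdot 0}\,\dfrac{2^{2(k+N(j))}}{|x|^{n+2}}\lesssim \|\Omega\|_{L^1}\dfrac{2^{2N(j)}2^{2k}}{|x|^{n+2}}$; summing this over $k$ with $2^{k}\lesssim|x-y|2^{-N(j)}$ (which is the range forced by $|x|>2^{k+2}$ combined with where $\varphi_{k+N(j)}$ lives) yields a geometric series bounded by $\|\Omega\|_{L^1}|x-y|^{-n}$. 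In the near regime $|x|\le 2^{k+2}$ I would invoke the cancellation \eqref{mean zero}: since the arguments $y'$ run over $\sn$ against $\Omega(y')(y_k')^N$ for $N\in\{0,1\}$, a Taylor expansion of $z\mapsto\varphi\bigl(\frac{x-z}{2^{k+N(j)}}\bigr)$ in $z$ around $0$ kills the first two terms, leaving $|K_k\ast\varphi_{k+N(j)}(x)|\lesssim \|\Omega\|_{L^1}\,2^{-k(n+1)}\,2^{-2N(j)}\cdot 2^{2}$ (the $2^{-2N(j)}$ from the second-order remainder and the $|y|^2/|y|^{n+1}\simeq 2^{-k(n-1)}$ from the radial integration), whence $\sum_k 2^{-k(n+1)}2^{-2N(j)}\mathbf 1_{|x-y|\le 2^{k+2}}\simeq |x-y|^{-n-1}2^{-2N(j)}\lesssim|x-y|^{-n}$. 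Multiplying by $|b(x)-b(y)|\le\|\nabla b\|_{L^\infty}|x-y|$ gives the size bound on $K_2$.

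For the smoothness estimate I would repeat this with $\nabla\varphi$ in place of $\varphi$, which costs one extra factor $2^{-(k+N(j))}$ from differentiating, i.e. a net $2^{N(j)}2^{-k}$ after the chain rule, exactly as in \eqref{b3}. The far regime again sums geometrically; the near regime again uses \eqref{mean zero} to extract $2^{-2N(j)}$, and the two combine to
\[
\sum_{k\in\ZZ}\bigl|\nabla(K_k\ast\varphi_{k+N(j)})(x-y)-\nabla(K_k\ast\varphi_{k+N(j)})(x-z)\bigr|\lesssim \|\Omega\|_{L^1}\,2^{N(j)}\,\frac{|y-z|}{|x-y|^{n+2}}
\]
for $2|y-z|\le|x-y|$, which after multiplying through by $b(x)-b(\cdot)$ and adding the term $|b(y)-b(z)|\sum_k|K_k\ast\varphi_{k+N(j)}(x-z)|$ (controlled by the size bound) yields the modulus $\omega_{2,j}(t)=\|\Omega\|_{L^1}\|\nabla b\|_{L^\infty}\min\{1,2^{N(j)}t\}$. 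The $L^2\to L^2$ boundedness needed to call $[b,T_{2,j}^N]$ a Calder\'on--Zygmund operator is already supplied by Lemma \ref{v21}. The main obstacle is the bookkeeping in the near regime: one must check carefully that the second-order Taylor remainder of $\varphi$, integrated against $\Omega(y')r^{-2}\,d\sigma\,dr$ over $2^k<|y|<2^{k+1}$, really produces the stated power $2^{-k(n+1)}2^{-2N(j)}$ uniformly in $x$ with $|x|\le 2^{k+2}$ — in particular that the remainder is dominated by an integrable bump at scale $2^{k+N(j)}$ so that the $y$-integral against $|y|^{-(n+1)}$ over an annulus of radius $\simeq 2^k$ behaves like $2^{-k}$, and that the surviving $\min\{1,2^{N(j)}t\}$ genuinely has finite Dini norm $\lesssim 1+N(j)$, as computed in \eqref{dini norm omega j}.
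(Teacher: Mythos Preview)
There is a genuine gap: you have applied the cancellation of $\Omega$ in the wrong regime. In the paper the roles are reversed relative to Lemma~\ref{lemtj}: for $T_{2,j}^N$ one uses the moment conditions \eqref{mean zero} in the \emph{far} regime $|x|>2^{k+2}$ (Taylor expanding $\varphi_{k+N(j)}(x-y)$ in $y$ and using $|x-\theta y|\gtrsim|x|$ to get $|K_k\ast\varphi_{k+N(j)}(x)|\lesssim\|\Omega\|_{L^1}2^{k}|x|^{-(n+2)}$), while in the \emph{near} regime $|x|\le 2^{k+2}$ a direct bound already gives $\|\Omega\|_{L^1}2^{-k(n+1)}$ because $\varphi_{k+N(j)}$, being at scale $2^{k+N(j)}\gg 2^{k}$, is pointwise $\lesssim 2^{-(k+N(j))n}$ and one simply integrates $|K_k|$ using $\|\Omega\|_{L^1}$. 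Your near-regime use of cancellation is harmless but unnecessary; the damage is in the far regime.

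Your far-regime argument fails for two reasons. First, $\varphi$ is Schwartz with $\widehat\varphi$ compactly supported, so $\varphi$ itself is \emph{not} compactly supported and there is no restriction ``$2^{k}\lesssim |x-y|2^{-N(j)}$'' on the sum; the only constraint from $|x|>2^{k+2}$ is $2^{k}<|x|/4$. Second, and more importantly, for the range $2^{k+2}<|x|\lesssim 2^{k+N(j)}$ the bump $\varphi_{k+N(j)}(x-\cdot)$ is essentially constant $\sim 2^{-(k+N(j))n}$ on the annulus $\{|y|\sim 2^{k}\}$, so without cancellation one only gets $|K_k\ast\varphi_{k+N(j)}(x)|\lesssim\|\Omega\|_{L^1}2^{-k}2^{-(k+N(j))n}$; summing this over $|x|2^{-N(j)}\lesssim 2^{k}\lesssim|x|$ yields $\|\Omega\|_{L^1}2^{N(j)}|x|^{-(n+1)}$. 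After multiplying by $|b(x)-b(y)|$ you obtain a size bound $\lesssim 2^{N(j)}\|\Omega\|_{L^1}\|\nabla b\|_{L^\infty}|x-y|^{-n}$, with an extra $2^{N(j)}$ that contradicts the stated lemma and would make $\|\omega_{2,j}\|_{\mathrm{Dini}}\sim 2^{N(j)}$ rather than $1+N(j)$, destroying the subsequent summation in $j$. The fix is exactly the paper's: use \eqref{mean zero} to Taylor-expand in the far regime, which kills the zeroth and first order terms and recovers the $N(j)$-free size bound.
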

\quad\quad

In fact, applying   Lemma \ref{lemtj1}  and Lemma \ref{lem 4} to $T=[b, T_{2,\,j}^{N}]$, we get that for $1<p<\infty$ and $w\in A_p$\begin{align}\label{i}
\|[b, T_{2,\,j}^{N}]f\|_{L^p(w)}&\lesssim \|\Omega\|_{L^{1}}\|\nabla b\|_{L^\infty}(1+N(j)){\{w\}}_{A_p}\|f\|_{L^p(w)},
\end{align} which gives the proof of  \eqref{vq31}. Taking $w=1,$ we get that for $1<p<\infty,$
\begin{align}\label{ti}
\|[b, T_{2,\,j}^{N}]f\|_{L^p}&\lesssim \|\Omega\|_{L^{\infty}}\|\nabla b\|_{L^\infty}(1+N(j))\|f\|_{L^p}.
\end{align}
Interpolating \eqref{ti} and \eqref{frourier 2}, we get \begin{align*}
\|[b, T_{2,\,j}^{N}]f\|_{L^p}&\lesssim \|\Omega\|_{L^{1}}\|\nabla b\|_{L^\infty}(1+N(j))2^{-\vartheta N(j-1)}\|f\|_{L^p}.
\end{align*} which gives \eqref{vv1}.

\quad\quad

\emph{\textbf{Proof of Lemma \ref{lemtj1}}.}
Recall the definition of $[b,{T}_{2,j}^{N}]$ given as by

\begin{align*}
[b, T_{2,\,j}^{N}]f&=\sum_{k\in \Bbb Z}[b, T_k(S_{k+N(j-1)}-S_{k+N(j)})] f.
\end{align*}

Now, we study size and smoothness of the kernel $K_2(x,y).$ Again, it suffices to verify the pointwise bound for $|K_{k}\ast \varphi_{k+N(j)}(x)|$ and $|\nabla K_{k}\ast \varphi_{k+N(j)}(x)|.$ Note that
\begin{align*}
|K_{k}\ast \varphi_{k+N(j)}(x)| &=\Big|\int_{\mathbb{R}^{n}} \frac{\Omega(y^{\prime})}{|y|^{n}} 1_{2^{k}<|y|<2^{k+1}} 2^{-(k+N(j)) n} \varphi(\frac{x-y}{2^{k+N(j)}}) d y\Big|.
\end{align*}
We consider the following two cases:  $|x|>2^{k+2}$ and $|x|\le 2^{k+2}.$

Case 1: $|x|>2^{k+2}$.

In this case we get $|x-\theta y|\ge |x|-\theta |y|\ge |x|-\frac{|x|}{2}\ge \frac{|x|}{2}$ for any $\theta\in (0,1).$

%Since $$\int_{{\Bbb S}^{n-1}}\Omega(y')(y')^\gamma\,d\sigma(y')=0,\,\,|\gamma|\le 1.$$
% expanding $\varphi_{k+N(j)}(x)$ into a Taylor series
%around $x$ gives that
By using the cancellation condition of $\Omega$ as in \eqref{mean zero} and the Taylor's expansion of $\varphi_{k+N(j)}(x-y)$ around $y$, we get that
\begin{align*}
|K_{k}\ast \varphi_{k+N(j)}(x)| &\le\sum_{|\alpha|=2}\int_{\mathbb{R}^{n}} \frac{|\Omega(y^{\prime})|}{|y|^{n+1}} 1_{2^{k}<|y|<2^{k+1}}(y) |\partial^\alpha\varphi_{k+N(j)}(x-\theta y)||y|^2 \,d y\\
&=\int_{\mathbb{R}^{n}} \frac{|\Omega(y^{\prime})|}{|y|^{n+1}} 1_{2^{k}<|y|<2^{k+1}}(y)\frac{1}{2^{2(k+N(j))}} \frac{2^{2(k+N(j))}|y|^2}{(2^{k+N(j)}+|x-\theta y|)^{n+2}} d y\\
&\lesssim\frac{2^k}{|x|^{n+2}}\int_{\mathbb{R}^{n}} \frac{|\Omega(y^{\prime})|}{|y|^{n}} 1_{2^{k}<|y|<2^{k+1}}(y) d y \\
&\lesssim \|\Omega\|_{L^1}\frac{ 2^{k}}{|x|^{n+2}}. %\chi_{|x|>2^{k+2}}.
\end{align*}

Case 2:   $|x|\le 2^{k+2}$.

We get
\begin{align*}
|K_{k}\ast \varphi_{k+N(j)}(x)| &\le\int_{\mathbb{R}^{n}} \frac{|\Omega(y^{\prime})|}{|y|^{n+1}} 1_{2^{k}<|y|<2^{k+1}}(y) 2^{-(k+N(j)) n}(y) \varphi(\frac{x-y}{2^{k+N(j)}})\,dy\\
&\lesssim 2^{-(k+N(j)) n}\int_{\mathbb{R}^{n}} \frac{|\Omega(y^{\prime})|}{|y|^{n+1}} 1_{2^{k}<|y|<2^{k+1}}(y) \,dy\\
&\lesssim \|\Omega\|_{L^1}2^{-k(n+1)}. % \chi_{|x|\le 2^{k+2}}.
\end{align*}
Combining  the above two cases, we have that
\begin{align}\label{kb1}
&\sum_{k\in \Bbb Z}|K_{k}\ast \varphi_{k+N(j)}(x-y)|\\
&\lesssim \|\Omega\|_{L^1}\sum_{k\in \Bbb Z}\frac{ 2^{k}}{|x-y|^{n+2}}1_{|x-y|>2^{k+2}}(x-y)+\|\Omega\|_{L^1} \sum_{k\in \Bbb Z}2^{-k(n+1)}1_{|x-y|\le 2^{k+2}}(x-y)\nonumber\\
&\lesssim \frac{\|\Omega\|_{L^1}}{|x-y|^{n+1}},\nonumber
\end{align}
and hence,
\begin{align}\label{kb2}
\sum_{k\in \Bbb Z}|(b(x)-b(y))K_{k}\ast \varphi_{k+N(j)}(x-y)|&\lesssim \frac{\|\Omega\|_{L^1}\|\nabla b\|_{L^\infty}}{|x-y|^n}.
\end{align}
This gives
\begin{align}\label{kbr}
|K_2(x,y)|&\lesssim \frac{\|\Omega\|_{L^1}\|\nabla b\|_{L^\infty}}{|x-y|^n}.
\end{align}

We now estimate $\nabla K_{k}\ast \varphi_{k+N(j)}(x).$ We also consider the following two cases: $|x|>2^{k+2}$ and $|x|\le 2^{k+2}.$

Case 1: $|x|>2^{k+2}$.

We get $|x- \theta y|\ge |x|- \theta |y|\ge |x|-\frac{|x|}{2}\ge \frac{|x|}{2},\,0<\theta<1$.
By using the cancellation condition of $\Omega$,
 %$\int_{{\Bbb S}^{n-1}}\Omega(y')\,d\sigma(y')=0,$ then
 we get that
\begin{align*}
|\nabla (K_{k}\ast \varphi_{k+N(j)})(x)| &=| K_{k}\ast \nabla\varphi_{k+N(j)}(x)|\\
&=\bigg|\int_{\mathbb{R}^{n}} \frac{\Omega(y^{\prime})}{|y|^{n+1}} 1_{2^{k}<|y|<2^{k+1}}(y) 2^{-N(j)-k}(\nabla\varphi)_{k+N(j)}(x-y)d y\bigg|\\
&=\bigg|\int_{\mathbb{R}^{n}} \frac{\Omega(y^{\prime})}{|y|^{n+1}} 1_{2^{k}<|y|<2^{k+1}}(y) 2^{-N(j)-k}\bigg[(\nabla\varphi)_{k+N(j)}(x-y)-(\nabla\varphi)_{k+N(j)}(x)\bigg]d y\bigg|\\
&\lesssim \int_{\mathbb{R}^{n}} \frac{|\Omega(y^{\prime})|}{|y|^{n+1}} 1_{2^{k}<|y|<2^{k+1}}(y) 2^{-2N(j)-2k}\frac{2^{3(k+N(j))}|y|}{(2^{k+N(j)}+|x-\theta y|)^{n+3}}d y\\
&\lesssim \frac{2^{k+N(j)}}{|x|^{n+3}}\int_{\mathbb{R}^{n}} \frac{|\Omega(y^{\prime})|}{|y|^{n}} 1_{2^{k}<|y|<2^{k+1}}(y) dy\\
&\lesssim \|\Omega\|_{L^1}\frac{2^{N(j)}2^k}{|x|^{n+3}}.%\chi_{|x|>2^{k+2}}.
\end{align*}

Case 2: $|x|\le 2^{k+2}$.

We have
\begin{align*}
|\nabla (K_{k}\ast \varphi_{k+N(j)})(x)| &=| K_{k}\ast \nabla\varphi_{k+N(j)}(x)|\\
&=\bigg|\int_{\mathbb{R}^{n}} \frac{\Omega(y^{\prime})}{|y|^{n+1}} 1_{2^{k}<|y|<2^{k+1}}(y) 2^{-N(j)-k}(\nabla\varphi)_{k+N(j)}(x-y)d y\bigg|\\
&\lesssim \int_{\mathbb{R}^{n}} \frac{|\Omega(y^{\prime})|}{|y|^{n+1}} 1_{2^{k}<|y|<2^{k+1}}(y) 2^{-N(j)-k}|(\nabla\varphi)_{k+N(j)}(x-y)|d y\\
&\lesssim 2^{(-N(j)-k)(n+1)}\int_{\mathbb{R}^{n}} \frac{|\Omega(y^{\prime})|}{|y|^{n+1}} 1_{2^{k}<|y|<2^{k+1}}(y) d y \\
&\lesssim \frac{\|\Omega\|_{L^1}}{2^{k(n+2)}}.
\end{align*}
Thus, for any $x,y,z$ satisfying $2|y-z|\le |x-y|,$  %then $|x-y|\simeq|x-z|$
\begin{align} \label{kb3}
&\sum_{k\in \Bbb Z}|K_{k}\ast \varphi_{k-N(j)}(x-y)-K_{k}\ast \varphi_{k-N(j)}(x-z)|\\
&\lesssim\sum_{k\in \Bbb Z}|\nabla K_{k}\ast \varphi_{k-N(j)}((1-\theta)(x-y)+\theta(x-z))||y-z| \nonumber\\
&\lesssim\|\Omega\|_{L^1}\bigg(\sum_{k\in \Bbb Z}2^{N(j)}\frac{2^k}{|x-y|^{n+3}}1_{|x-y|>2^{k+2}}(x-y)+\sum_{k\in \Bbb Z}\frac{1}{2^{k(n+2)}}1_{|x-y|\le2^{k+2}}(x-y)\bigg)|y-z| \nonumber\\
&\lesssim\|\Omega\|_{L^1}\bigg(\frac{2^{N(j)}}{|x-y|^{n+2}}+\frac{1}{|x-y|^{n+2}}\bigg)|y-z|\nonumber
\\&\lesssim\|\Omega\|_{L^1}2^{N(j)}\frac{|y-z|}{|x-y|^{n+2}}.\nonumber
\end{align}

From the above inequality  and \eqref{kb1} we get that  for any $x,y,z$ satisfying $2|y-z|\le |x-y|,$
\begin{align} \label{kb4}
&\sum_{k\in \Bbb Z}|(b(x)-b(y))K_{k}\ast \varphi_{k+N(j)}(x-y)-(b(x)-b(z))K_{k}\ast \varphi_{k+N(j)}(x-z)|\\
&\lesssim|(b(x)-b(y))|\sum_{k\in \Bbb Z}|K_{k}\ast \varphi_{k+N(j)}(x-y)-K_{k}\ast \varphi_{k+N(j)}(x-z)|\nonumber\\&\quad+|(b(y)-b(z))|\sum_{k\in \Bbb Z}|K_{k}\ast \varphi_{k+N(j)}(x-z)| \nonumber\\
&\lesssim\|\Omega\|_{L^1}\|\nabla b\|_{L^\infty}2^{N(j)}\frac{|y-z|}{|x-y|^{n+1}}+\|\Omega\|_{L^1}\|\nabla b\|_{L^\infty}\frac{|y-z|}{|x-z|^{n+1}} \nonumber\\
&\lesssim2^{N(j)}\|\Omega\|_{L^1}\|\nabla b\|_{L^\infty}\frac{|y-z|}{|x-y|^{n+1}}. \nonumber
\end{align}

Combined \eqref{kb2} and \eqref{kb4}, we obtain that  for any $x,y,z$ satisfying $2|y-z|\le |x-y|,$
\begin{align*}\sum_{k\in \Bbb Z}|(b(x)-b(y))K_{k}\ast \varphi_{k+N(j)}(x-y)-(b(x)-b(z))K_{k}\ast \varphi_{k+N(j)}(x-z)|\lesssim\frac{\omega_{2,j}\Big(\frac{|y-z|}{|x-y|}\Big)}{|x-y|^n}.
\end{align*}where
\begin{align*}
\omega_{2,j}(t) = \|\Omega\|_{L^1}\|\nabla b\|_{L^\infty} \min (1,2^{N(j)} t).
\end{align*}
This yeilds that   for any $x,y,z$ satisfying $2|y-z|\le |x-y|,$
\begin{align*}|K_2(y,x)-K_{2}(z,x)|+|K_2(x,y)-K_{2}(x,z)|\lesssim\frac{\omega_{2,j}\Big(\frac{|y-z|}{|x-y|}\Big)}{|x-y|^n}.\end{align*}

The proof of Lemma \ref{lemtj1} is complete. \qed

\bigskip

\section{Proof of  Theorem \ref{thm2}}\label{s8}

  Write
$$\begin{array}{cl}
[b,T_\Omega]\nabla f(x)=-T_\Omega[b, \nabla]f(x)+[b, \nabla T_\Omega]f(x).\end{array}$$
For the first
term, since $$[b, \nabla]f=-(\nabla b)f,$$  applying  Theorem 1.4 in \cite{HRT} yields for $1<p<\infty$ and $w\in A_p,$
 \begin{align}\label{TOmega}
 \|T_\Omega[b, \nabla]f\|_{L^p(w)}&\lesssim{\{w\}}_{A_p}(w)_{A_p}\|\Omega\|_{L^\infty}\|(\nabla b) f\|_{L^p(w)}\\&\lesssim \nonumber {\{w\}}_{A_p}(w)_{A_p}\|\Omega\|_{L^\infty}\|\nabla b\|_{L^\infty}\|f\|_{L^p(w)}.
 \end{align}
 For the second term, write $K(x)=\frac{\Omega(x')}{|x|^n},$ the commutator  \begin{align*}[b, \nabla T_\Omega] f(x)=p.v.\dint_{{\Bbb R}^n}\nabla K(x-y)(b(x)-b(y) f(y)\,dy.\end{align*}  It is easy to verify that  $\nabla K(x)$ is homogeneous of degree $-n-1.$ A trivial computation gives that \begin{align*}\|\nabla K\|_{L^\infty(\Bbb S^{n-1})}\lesssim \|\Omega\|_{L^\infty}+\|\nabla \Omega\|_{L^\infty}\end{align*} and \begin{align*}(x_k\nabla K(x))^{\wedge}(\xi)&=i\xi_k\widehat{\nabla K}(\xi)=i\dfrac{\partial}{\partial \xi_k}(i\xi_1\widehat{K}(\xi),\cdots,i\xi_n\widehat{K}(\xi)).\end{align*}
Moreover,
$$
\dfrac{\partial}{\partial \xi_k}(\xi_j\widehat{K})(\xi)=\begin{cases}\widehat{ K}(\xi)+\xi_j\dfrac{\partial\widehat{K}(\xi)}{\partial \xi_k}\ &\ \text{as}
j=k;\\
\xi_j\dfrac{\partial\widehat{K}(\xi)}{\partial \xi_k}\ &\ \text{as}\ j\not=k.
\end{cases}
$$
So,
$$(x_k\nabla K(x))^{\wedge}(0)=0\ \ \forall\ \ k\in\{1,\cdots, n\}.
$$
Additionally,
$\widehat{ \nabla K}(\xi)=i \xi\widehat{ K}(\xi)$, then $\widehat{\nabla K}(0)=0.$

This says  $$\int_{\mathbb S^{n-1}}(x_k')^\gamma\nabla K(x')\,d\sigma(x')=0\ \ \forall\ \ k\in\{1,\cdots,n\}, \gamma\in\{0,1\}.
$$
Since $|\nabla K(x')|\in L^\infty(\mathbb S^{n-1}),$
by  using Theorem \ref{thm1}, we see that  \begin{align}\label{C}\|[b, \nabla T_\Omega]f\|_{L^p(w)}\lesssim{\{w\}}_{A_p}(w)_{A_p}\|\nabla K\|_{L^\infty({\Bbb S}^{n-1})}\|\nabla b\|_{L^\infty}\|f\|_{L^p(w)}.\end{align}
Combining the estimates for \eqref{TOmega} and \eqref{C}, we get
 \begin{align}\nonumber\|[b,T_\Omega]\nabla f\|_{L^p(w)}&\lesssim{\{w\}}_{A_p}(w)_{A_p}(\|\Omega\|_{L^\infty}+\|\nabla \Omega\|_{L^\infty})\|\nabla b\|_{L^\infty}\|f\|_{L^p(w)},\end{align}
thereby reaching the first part of Theorem \ref{thm2}.

Moreover, regarding $\nabla[b,T_\Omega] f$ we have
$$\begin{array}{cl}
\nabla[b,T_\Omega] f(x)&=-[b, \nabla]T_\Omega f(x)+[b, \nabla T_\Omega]f(x)=-(\nabla b)(x)T_\Omega f(x)+[b, \nabla T_\Omega]f(x).\end{array}$$
In a similar way, we obtain
\begin{align}\|\nabla[b,T_\Omega] \|_{L^p(w)}&\lesssim \nonumber {\{w\}}_{A_p}(w)_{A_p}(\|\Omega\|_{L^\infty}+\|\nabla \Omega\|_{L^\infty})\|\nabla b\|_{L^\infty}\|f\|_{L^p(w)}.\end{align}
The proof of Theorem \ref{thm2} is complete.
\qed

\begin{thebibliography}{120}

\bibitem{C3} A. P. Calder\'on, \emph{Uniqueness in the Cauchy problem for partial differential equations}, Amer. J. Math. {\textbf{80}}  (1958), 16-36.

\bibitem{C1} A. P. Calder\'on,
\emph{Commutators of singular integrals,} Proc. Nat. Acad. Sci.
USA. \textbf{53} (1965), 1092-1099.

\bibitem{C2} A. P. Calder\'on,
\emph{Commutators, singular integrals on Lipschitz curves and application,} Proc. Inter. Con. Math.
Helsinki, 1978, 86-95.

\bibitem {CD1} Y. Chen, Y. Ding, G. Hong and J. Xiao, \emph{Some jump and variation inequalities for the Calder\'{o}n commutators and related operators,} arXiv:1709.03127.

\bibitem{CDH1}Y. Chen, Y. Ding and G. Hong, \emph{Commutators with fractional differentiation and new
characterizations of BMO-Sobolev spaces,} Anal. PDE. \textbf{9} (2016), 1497-1522.

\bibitem{Co} J. Cohen,\emph{ A sharp estimate for a multilinear singular integral in  $\mathbb{ R}^n,$} Indiana Univ. Math. J. \textbf{30 }(1981), 693-702.

\bibitem{CM1} R. Coifman and Y. Meyer, \emph{On commutators of singular integrals and bilinear integrals}, Trans. Amer. Math.
Soc. \textbf{212} (1975), 315-331.

\bibitem{CM} R. Coifman and Y. Meyer, \emph{Au dela des operateurs pseudo-differentiels,} Asterisque, no.57, Soc. Math. de France, 1978.\label{CM}

\bibitem{CCDO} J.M. Conde-Alonso, A. Culiuc, F. Di Plinio and Y. Ou, \emph{A sparse domination
principle for rough singular integrals}, Anal. PDE.\textbf{ 10} (2017), no. 5, 1255-1284.

\bibitem{DHL}
F. Di Plinio, T.P. Hyt\"onen and K. Li
\emph{Sparse bounds for maximal rough singular integrals via the Fourier transform}, Annales de l'institut Fourier, to appear.
Available at
https://arxiv.org/abs/1706.09064.

\bibitem{Fu}N. Fujii, \emph{Weighted bounded mean oscillation and singular integrals,} Math. Japon. \textbf{22}
(1977/1978), 529-534.

\bibitem{DR86}
J. Duoandikoetxea and J. Rubio de Francia, \emph{Maximal and
singular integral operators via Fourier transform estimates,}
Invent. Math. \textbf{84} (1986), 541-561.

\bibitem{Fe} C. Fefferman,
\emph{Recent Progress in classical Fourier analysis,} Proc. Inter. Con. Math.
Vancouver, 	1974, 95-118.

\bibitem{GH}  L. Grafakos and P. Honz\'{i}k, \emph{A weak-type estimate for commutators}, Int. Math. Res. Not.
\textbf{2012} (2012), 4785-4796.

\bibitem{SH} S. Hofmann, \textit{ Weighted inequalities for commutators of rough  singular integrals,
}   Indian Univ. Math. J. \textbf{ 39} (1990), 1275-1304.

\bibitem{HRT} T. Hyt\"{o}nen, L. Roncal and O. Tapiola, \emph{Quantitative weighted estimates for
rough homogeneous singular integrals,} Israel J. Math. \textbf{218} (2017), no. 1, 133-
164.

\bibitem{Lac15} M. T. Lacey, \emph{An elementary proof of the $A_2$ bound},  Israel J. Math. \textbf{217}(2017), 181-195.

\bibitem{Ler 0} A.K. Lerner, \emph{On pointwise estimates involving sparse operators,} New York J. Math. \textbf{22} (2017), 341-349.

\bibitem{Ler 2} A. K. Lerner, \emph{A note on weighted bounds for rough singular integrals,} C. R. Acad. Sci. Paris, Ser. I. \textbf{356} (2018), no. 1, 77-80.

\bibitem {Me} Y. Meyer, \emph{Ondelettes et Op\'erateurs}, Vol. II, Hermann, Paris, 1990. \label{Me}
	
\bibitem {MC} Y. Meyer and R. Coifman, \emph{Ondelettes et Op\'erateurs,} Vol. III, Hermann, Paris, 1991.\label{MC}

\bibitem{MuWh71} B. Muckenhoupt, R. L. Wheeden, \emph{Weighted norm inequalities for singular and fractional integrals}, Tran. Amer. Math. Soc. \textbf{161} (1971), 249-258.

\bibitem{Mu} T. Murai, \emph{Boundedness of singular integral operators of Calder\'{o}n-type}, Adv.   Math.
\textbf{59} (1986), 71-81.

\bibitem{CM2} C. Muscalu, \emph{Calder\'{o}n commutators and the Cauchy integral on Lipschitz curves revisited II. The Cauchy integral and its generalizations,}
Revista Mat. Iberoam. \textbf{30} (2014),  1089-1122.

\bibitem{sw} E. M. Stein and G. Weiss,\emph{ Interpolation of operators with change of measures,} Trans. Amer.
Math. Soc. \textbf{87 }(1958), 159-172.

\bibitem{Ta3} M. Taylor, \emph{Commutator estimates for H\"{o}lder continuous and bmo-Sobolev
multipliers}, Proc. Amer. Math. Soc. \textbf{143} (2015), 5265-5274.

\bibitem{Wi} J. M. Wilson, \emph{Weighted inequalities for the dyadic square function without dyadic $A_1$}, Duke
Math. J. \textbf{55} (1987), 19-50.

\bibitem{Y} A. Youssfi, \emph{Regularity properties of commutators and BMO-Triebel-Lizorkin spaces},  Ann. Inst. Fourier (Grenoble) \textbf{45} (1995), 795-807.

 \end {thebibliography}
\end{document}